\newtheorem{theorem}{Theorem}[section]
\newtheorem{lemma}[theorem]{Lemma}
\newtheorem{definition}[theorem]{Definition}
\newtheorem{hypothesis}[theorem]{Hypothesis}
\newtheorem*{introthm}{Theorem}
\newcommand\cf{{\em c.f.~}}
\newcommand\ie{{\em i.e., }}
\DeclareMathOperator\ad{ad}
\DeclareMathOperator\Aut{Aut}
\DeclareMathOperator\End{End}
\DeclareMathOperator\id{id}
\DeclareMathOperator\Spec{Spec}
\DeclareMathOperator\Sym{Sym}
\DeclareMathOperator\Z{Z}
\newcommand\la{\langle}
\newcommand\ra{\rangle}
\newcommand\size[1]{\lvert #1\rvert}
\newcommand\CC{\mathbb{C}}
\newcommand\FF{\mathbb{F}}
\newcommand\NN{\mathbb{N}}
\newcommand\ZZ{\mathbb{Z}}
\newcommand\op{\mathrm}
\renewcommand\cal{\mathcal}
\renewcommand\frak{\mathfrak}
\newcommand\al{\alpha}
\newcommand\ep{\varepsilon}
\newcommand\lm{\lambda}
\newcommand\cc{\op{cc}}
\newcommand\rt[1]{{\rm #1}}
\newcommand\affrt[1]{\hat{\rt #1}}
\newcommand\sh[1]{(\mathrm #1)}
\renewcommand*{\ext@figure}{lot}
\let\c@figure\c@table
\let\ftype@figure\ftype@table
\begin{document}
\abovedisplayskip=0.5em
\belowdisplayskip=0.5em

\title{Linear idempotents in Matsuo algebras}
\author{F. Rehren}
\date{5th February 2015}

\maketitle

\begin{abstract}
  \noindent
  Matsuo algebras are an algebraic incarnation of 3-transposition groups with a parameter $\al$,
  where idempotents takes the role of the transpositions.
  We show that a large class of idempotents in Matsuo algebras
  satisfy the Seress property,
  making these nonassociative algebras well-behaved analogously to associative algebras, Jordan algebras and vertex (operator) algebras.
  We calculate eigenvalues in the Matsuo algebra of $\Sym(n)$ for any $\al$,
  generalising some vertex algebra results for which $\al=\frac{1}{4}$.
  Finally, in the Matsuo algebra of the root system ${\rm D}_n$,
  we show $n-3$ conjugacy classes of involutions coming from the Weyl group
  are in natural bijection with idempotents in the algebra via their fusion rules.
\end{abstract}

  Idempotents play a distinguished role in algebras.
  In matrix algebras and generally in associative algebras,
  idempotents are projections onto subspaces,
  with eigenvalues $1$ and $0$.
  In nonassociative algebras, the situation is more subtle.
  In for example the classical theory of Jordan algebras,
  structural theorems depend on the existence of idempotents,
  which now admit eigenvalues $1$, $0$ and $\frac{1}{2}$;
  a key result is that the product of a $\phi$-eigenvector with a $\psi$-eigenvector
  is a sum of $\phi\star\psi$-eigenvectors,
  according to the fusion rules $\Phi(\al)$ of Table~\ref{tbl jordan}
  with $\al = \frac{1}{2}$.
  In some ways, idempotents are also analogous
  to $\frak{sl}_2$-subalgebras of Lie algebras.
  
  These ideas are captured by {\em axial algebras},
  which are algebras generated by idempotents satisfying some fusion rules $\Phi$.
  An important source of axial algebras
  are the weight-$2$ subalgebras of a special class of vertex (operator) algebras,
  where the fusion rules come from the representation theory of the Virasoro algebra.
  The most famous instance is the Griess algebra---%
  the weight-$2$ subalgebra of the vertex algebra $V^\natural$---%
  whose automorphism group is the Monster.
  This algebra is generated by idempotents
  with eigenvalues $1,0,\frac{1}{4},\frac{1}{32}$
  satisfying the Ising fusion rules.
  These fusion rules are $\ZZ/2$-graded,
  so each such idempotent induces an involution,
  in the conjugacy class $\sh{2A}$.
  These involutions generate the entire group and have pairwise products of order at most $6$,
  whence the Monster is a $6$-transposition group.
  
  The fusion rules $\Phi(\al)$ are simpler but also $\ZZ/2$-graded.
  Using the grading, in \cite{hrs} it is shown that the involutions induced from $\Phi(\al)$-idempotents
  generate a $3$-transposition group $G$
  if and only if those idempotents generate a {\em Matsuo algebr}a $M_\al(\cal G)$,
  and this is always the case when $\al\neq0,\frac{1}{2}$%
  \footnote{
    when $\al=\frac{1}{2}$,
    the Jordan algebras with associated $3$-transposition groups
    are classified in \cite{jordan3trgps}}
  or $1$.
  Here $\cal G$ is the Fischer space, a graph, of $G$.
  
  In this text we investigate further algebraic properties of such $M_\al(\cal G)$.
  For a $3$-transposition group $G$,
  its Matsuo algebra $M_\al(\cal G)$ may be thought of as an alternative to its ordinary group algebra $\FF G$;
  the theory of group algebras, for example relating central primitive idempotents to irreducible characters,
  has already been fruitfully well-developed \cite{passman}.
  The key to our approach
  is that {\em linear} idempotents provide a direct link
  between group-theoretic properties of $G$
  and structural properties of the algebra $M_\al(\cal G)$.
  Our results, outlined below, are a first step of the same programme for arbitrary idempotents in axial algebras.
  
  Section~\ref{sec pre} recalls definitions---%
  in particular, the Seress property of the fusion rules of an idempotent $e$
  is that $e$ is globally associative with its $1,0$-eigenspaces---%
  and preliminary results.
  Section~\ref{sec 3trgps} presents Hypothesis~\ref{hyp vreg} on $3$-transposition groups,
  asking that maximal $3$-transposition subgroups act transitively on the transpositions they do not contain,
  and Theorem~\ref{thm vreg} showing that this holds in large classes of examples.
  In Section~\ref{sec idempots},
  we introduce Definition~\ref{def linidempot},
  the linear idempotents:
  idempotents which are identities of parabolic subalgebras,
  that is, come from $3$-transposition subgroups,
  closed under differences $e-f$ when $f$ is in the $1$-eigenspace of $e$.
  We then prove that the following weakening of associativity holds:
  
  \begin{introthm}[\ref{thm fazit seress}]
      Linear idempotents in $M_\al(\cal G)$ are Seress (over a suitable field)
      if the $3$-transposition $G$ group of $\cal G$
      satisfies Hypothesis~\ref{hyp vreg}.
  \end{introthm}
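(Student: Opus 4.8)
I would reduce to the case where the linear idempotent is the identity $e_H$ of a parabolic subalgebra $M_\al(\cal H)$ with $H$ a \emph{maximal} $3$-transposition subgroup of $G$, and then analyse $\ad_{e_H}$ using $H$-equivariance together with Hypothesis~\ref{hyp vreg}. Two preliminary reductions handle Definition~\ref{def linidempot}. First, a lemma on the difference construction: if $e$ and $f$ are Seress with $f\in M_1(e)$ and $ef=f$, then $e-f$ is Seress, since $M_1(e-f)\oplus M_0(e-f)$ is spanned by common eigenvectors of $\ad_e$ and $\ad_f$ with eigenvalues in $\{0,1\}$, on which the associativity identity for $e-f$ follows from those for $e$ and $f$. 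Second, if $H$ is not maximal I would pick a maximal $3$-transposition subgroup $M$ with $H\le M\le G$, regard $e_H$ as a linear idempotent of $M_\al(\cal M)$, and induct on $\size G$ (base case $H=G$: here $e_G$ is the algebra identity, trivially Seress), so the remaining point is that a Seress idempotent of a parabolic $M_\al(\cal M)$ stays Seress in $M_\al(\cal G)$ --- which will fall out of the same eigenspace bookkeeping as the main case. Thus assume $e=e_H$ with $H$ maximal, so Hypothesis~\ref{hyp vreg} gives that $H$ is transitive on the set $D\setminus D_H$ of transpositions it omits. (The normalising scalar $\lambda_H$ in $e_H=\lambda_H\tsum_{h\in D_H}h$ must exist, which is what ``suitable field'' means: the characteristic avoids the valencies of $\cal H$ and of $\cal G$ relative to $\cal H$ that occur as denominators.)

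\textbf{Eigenspaces.}
Write $M_\al(\cal G)=M_\al(\cal H)\oplus U$ with $U=\la d : d\notin D_H\ra$. Since $\ad_{e_H}$ commutes with the $H$-action, it fixes $M_\al(\cal H)$ pointwise and is block-triangular over $M_\al(\cal H)\subseteq M_\al(\cal G)$, so its eigenvalues are $1$ together with those of the induced $H$-equivariant map $\bar T$ on $U$. For a basis vector $d\notin D_H$ one finds $e_H\cdot d=\tfrac{\al}{2}\lambda_H\bigl(\tsum_{h}h+k\,d-\tsum_{h}h^{d}\bigr)$, the sums over the $k$ points $h\in D_H$ collinear with $d$ (with $k$ constant by transitivity), and every third point $h^d$ again lies outside $D_H$; hence $\bar T(d)=\tfrac{\al}{2}\lambda_H\bigl(k\,d-\tsum_{h}h^{d}\bigr)$. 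Double-counting the lines through a fixed point that meet $D_H$ shows $\bar T$ annihilates the all-ones vector $u=\tsum_{d\notin D_H}d$ while $e_H\cdot u\in M_\al(\cal H)$, so $u$ is the sum of a $1$- and a $0$-eigenvector; and a structure-constant computation shows $\bar T$ has only eigenvalues $0$ and $\al$ on the augmentation-zero part $U_0$ (matching the fusion rules $\Phi(\al)$ of the standard axes). Since $\al\ne1$ this yields $M_1(e_H)=M_\al(\cal H)$ and identifies $M_0(e_H)$ and $M_\al(e_H)$ with the $\bar T$-eigenspaces in $U$, up to the correction vector $u-e_H\cdot u$.

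\textbf{The Seress identity.}
It remains to check $a\cdot(e_H\cdot b)=(a\cdot e_H)\cdot b$ for all $b$ and all $a\in M_1(e_H)\oplus M_0(e_H)$; by linearity $a$ may be taken in $D_H$, equal to $u$, or a $\bar T$-fixed vector in $U_0$, and $b$ a basis transposition. For $a\in D_H\subseteq M_1(e_H)$ the identity reads $a\cdot(e_H\cdot b)=a\cdot b$; substituting the formula for $e_H\cdot b$ turns it into an equality between sums of transpositions indexed by the lines of $\cal G$ through $b$ and the lines joining $a$ to the points that appear, and the transitivity of Hypothesis~\ref{hyp vreg} makes the relevant incidence counts independent of $b$, so the two sides coincide. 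For $a\in M_0(e_H)$ the identity reads $a\cdot(e_H\cdot b)=0$, obtained either directly or by applying the $M_1$ case to the complementary idempotent $e_G-e_H$. Running this relative to a maximal overgroup $M$ also supplies the deferred step: $U$ contributes only $\bar T$-eigenvalues $0$ and $\al$, so passing from $M_\al(\cal M)$ to $M_\al(\cal G)$ adds no $1$-eigenvectors and the new $0$-eigenvectors are again Seress-compatible, closing the induction.

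\textbf{Main obstacle.}
The heart of the matter is the combinatorial identity above: one must show that the way the ``second neighbourhood of $b$ through $D_H$'' meets a fixed $h'\in D_H$ does not depend on the choice of $b\notin D_H$. Outside the scope of Hypothesis~\ref{hyp vreg} the count $k$ and the finer incidence data vary with $b$, the decomposition of $\ad_{e_H}$ loses its uniformity, and the identity fails in general; this is exactly why the theorem is conditional on the Hypothesis, and why Theorem~\ref{thm vreg} --- verifying it for large families in a manner inherited by parabolics --- carries the load.
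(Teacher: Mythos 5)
Your overall architecture (first handle identities of parabolic subalgebras, then propagate through the difference construction $e-f$) matches the paper's, but there are two substantive problems. The smaller one is in your eigenvalue analysis of $e_H=\id_{\cal H}$: it is not true that the induced map on the complement has only eigenvalues $0$ and $\al$. The correct statement (the paper's Lemma~\ref{lem id eigval}) is that the non-$1$ eigenvalues are $\frac{\al}{2+\al k_{\cal H}}\bigl(k^{\cal H}_{\cal G}-\lm\bigr)$ as $\lm$ ranges over $\Spec(\cal G/\cal H)$ — already for $\id_{\cal A_i}$ inside $M_\al(\cal A_n^\pm)$ one gets the two nontrivial values $\eta_\al(i)$ and $\hat\eta_\al(i)$ of Theorem~\ref{thm idempots an}, neither equal to $\al$. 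You are conflating the parabolic identities with the single-point axes of Lemma~\ref{lem matsuo axes}, which do satisfy $\Phi(\al)$. Relatedly, the paper needs Perron--Frobenius (via connectedness and regularity of $\cal G/\cal H$, which is what Hypothesis~\ref{hyp vreg} actually buys) to show the $0$-eigenspace is one-dimensional, and an automorphism argument — the Miyamoto involutions $\tau(h)$, $h\in\cal H$, fix $\id_{\cal H}$ and hence preserve its eigenspaces — to get $1\star\phi\subseteq\{\phi\}$; your incidence-counting sketch does not obviously replace either ingredient.

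The critical gap is in your difference lemma. You assert that $M_1(e-f)\oplus M_0(e-f)$ is spanned by simultaneous eigenvectors of $\ad(e)$ and $\ad(f)$ whose eigenvalues both lie in $\{0,1\}$. But a simultaneous eigenvector with eigenvalues $\lm$ for $e$ and $\mu$ for $f$ is a $(\lm-\mu)$-eigenvector of $e-f$, and $\lm-\mu\in\{0,1\}$ does not force $\lm,\mu\in\{0,1\}$ unless one excludes coincidences such as $\eta_\al(i)-\eta_\al(j)=0$ or $=1$. Ruling this out is the actual content of the paper's proof of Theorem~\ref{thm fazit seress}: writing a candidate eigenvalue as $\al\sum_i\mu_i/(2+\al k_i)$ (or $1$ minus this), the value $1$ is excluded by comparing degrees in $\al$, and the value $0$ is excluded by the linear independence of $\{1/(2+\al k_i)\}$ over $\FF$, which in turn rests on the $k_i$ being pairwise distinct because the Perron--Frobenius eigenvalues of strictly nested connected Fischer spaces strictly increase. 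This is why the theorem is stated over $\FF(\al)$ with $\al$ an indeterminate; your gloss that a ``suitable field'' merely needs to avoid valencies as denominators of the normalising scalars misses the point — even in characteristic zero a specific value of $\al$ can create the forbidden coincidences, and nothing in your argument detects or excludes them.
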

  
  The Seress property is well-known to hold for all idempotents in Jordan algebras by the Peirce decomposition and multiplication of eigenspaces.
  Similarly, it holds for all idempotents in weight-$2$ subalgebras of vertex algebras
  via the fusion rules of the Virasoro algebra and application of \cite{miyamoto}, Lemma 5.1.
  However, \cf \cite{matsuo} Proposition 3.3.8 and \cite{jordan3trgps}, these are far from including all Matsuo or axial algebras;
  to our knowledge, this paper is the first which handles a general class of idempotents in these nonassociative algebras.
  
  A particular application of Theorem~\ref{thm fazit seress}
  is that we can find those tori, \ie maximal associative subalgebras,
  which arise from chains of parabolic subgroups via identity elements of chains of parabolic subalgebras.
  The search for tori was initiated in \cite{alonso},
  and goes back to classical work on the Monster,
  including \cite{mn} and framed vertex algebras,
  although we believe they still merit further investigation.
  
  Specialising to the case of the Matsuo algebra of the symmetric group $\Sym(n)$ of degree $n$,
  in Section~\ref{sec eigvals an}, we achieve
  \begin{introthm}[\ref{thm idempots an}]
    The eigenvalues of primitive linear idempotents in $M_\al(\cal A_n^\pm)$ are determined.
  \end{introthm}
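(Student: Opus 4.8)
The plan is to reduce the statement to a short eigenvalue computation resting only on the Matsuo product rule and on the explicit shape of the primitive linear idempotents.

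\emph{Step 1: the idempotent.} I would realise $M_\al(\cal A_n^\pm)$ on the basis $\{a_t\}$ indexed by the transpositions $t=(ij)$ of $\Sym(n)$, with $a_t^2=a_t$, $a_ta_s=0$ when $t,s$ commute, and $a_ta_s=\tfrac\al2(a_t+a_s-a_u)$ when $t,s,u$ span a common $\Sym(3)$. By Definition~\ref{def linidempot} and the parabolic structure of $\Sym(n)$, a primitive linear idempotent is the identity $e_S$ of the subalgebra on the transpositions supported in an $m$-element subset $S$, and a one-line check against the product rule gives $e_S=(1+\al(m-2))^{-1}\sum_{t\subseteq S}a_t$ (in the above normalisation of the product). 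As $\Sym(n)$ is transitive on the $m$-subsets, it suffices to compute the spectrum of the left multiplication operator $\ad_{e_S}$ for one $S$ of each size $m\in\{2,\dots,n\}$, and the same computation covers both forms $\cal A_n^\pm$.

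\emph{Step 2: block-triangular reduction.} I would split the $\binom n2$ basis vectors by incidence with $S$: those inside $S$ span the parabolic subalgebra $U\cong M_\al(\cal A_m)$; those meeting $S$ in exactly one point span a space $W$ of dimension $m(n-m)$; those disjoint from $S$ span $C$, of dimension $\binom{n-m}2$. Since $e_S$ is the unit of $U$, $\ad_{e_S}|_U=\id$; since any transposition disjoint from $S$ commutes with every transposition inside $S$, $\ad_{e_S}|_C=0$; and evaluating $e_S\cdot a_t$ for $t$ meeting $S$ in one point shows that only transpositions inside $S$ or meeting $S$ in one point occur, so $\ad_{e_S}(W)\subseteq U\oplus W$. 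Hence, in $M_\al(\cal A_n^\pm)=U\oplus W\oplus C$, the operator $\ad_{e_S}$ is block upper-triangular with diagonal blocks $\id_U$, a map $T\colon W\to W$, and $0_C$, so its spectrum is $\{1\}$ (multiplicity $\binom m2$) together with $\operatorname{spec}(T)$ together with $0$ from $C$ (multiplicity $\binom{n-m}2$).

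\emph{Step 3: diagonalising $T$ and assembling.} For $i\in S$, $p\notin S$ the product formula gives, after projection onto $W$, $T(a_{(ip)})=\tfrac{\al}{2(1+\al(m-2))}\bigl(m\,a_{(ip)}-\sum_{k\in S}a_{(kp)}\bigr)$. Thus $W=\bigoplus_{p\notin S}W_p$ with $W_p=\langle a_{(ip)}:i\in S\rangle$ the natural permutation module of $\Sym(S)$, and $T|_{W_p}$ is $\tfrac{\al}{2(1+\al(m-2))}$ times $mI-J$ ($J$ the all-ones matrix): eigenvalue $0$ on the all-ones vector and $m$ on the sum-zero hyperplane. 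Summing over the $n-m$ choices of $p$, the eigenvalues of a primitive linear idempotent $e_S$ on $M_\al(\cal A_n^\pm)$ are
\[
  1\ \bigl(\text{mult.\ }\tbinom m2\bigr),\qquad \mu:=\tfrac{\al m}{2(1+\al(m-2))}\ \bigl(\text{mult.\ }(m-1)(n-m)\bigr),\qquad 0\ \bigl(\text{mult.\ }\tbinom{n-m}2+(n-m)\bigr),
\]
the three multiplicities summing to $\binom n2$. I would then observe that $\ad_{e_S}$ is semisimple — so that $M_\al(\cal A_n^\pm)$ is the direct sum of the three eigenspaces — as soon as $\mu\neq1$, since then the triangular blocks share no eigenvalue; and that at $\al=\tfrac14$ one gets $\mu=\tfrac m{2(m+2)}$, reproducing the known vertex-algebra eigenvalues.

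The main obstacle is not the linear algebra but the bookkeeping of the "third point" of each line of the Fischer space inside $e_S\cdot a_t$ — above all, confirming that no transposition disjoint from $S$ ever appears, which is exactly what legitimises the block-triangular reduction — together with the careful treatment of the degenerate parameters $1+\al(m-2)=0$ (where $e_S$ fails to exist) and $\mu=1$ (where a Jordan block could arise); these are what the "suitable field" proviso of Theorem~\ref{thm fazit seress} is there to exclude, and I would state them explicitly as hypotheses. If the full Theorem~\ref{thm idempots an} asserts in addition that each $e_S$ is an axis obeying the $\Phi(\mu)$-fusion rules, that sharper statement needs the Seress property (Theorem~\ref{thm fazit seress}) together with a direct check of the products of eigenspaces, which is a genuinely separate step.
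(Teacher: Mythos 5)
There is a genuine gap: your computation is carried out in the wrong algebra and for the wrong idempotents. The theorem concerns $M_\al(\cal A_n^\pm)$, the Matsuo algebra of the \emph{double graph} $\cal A_n^\pm\cong\cal D_{n+1}$ (Definition~\ref{def dbl}, Lemma~\ref{lem hat an is dn+1}), whose points are the $n(n+1)$ transpositions of $W(\rt D_{n+1})$, not the $\binom n2$ transpositions of a symmetric group. Your remark that ``the same computation covers both forms'' is where the argument fails: in the double graph a point $x$ of $\cal A_i$ is collinear with both $y^+$ and $y^-$ for $y$ outside, and the vectors $x^+-x^-$ furnish an additional eigenvalue $\hat\eta_\al(i)=\frac{\al i}{1+\al(i-1)}$ of $\id_{\cal A_i}$ that your three-block decomposition cannot see (in the paper this comes from Lemma~\ref{lem id eigval} applied to the very regular embedding $\cal A_i\subseteq\cal A_i^\pm$ with boundary graph $\cal A_i^\pm/\cal A_i\cong\cal A_i$, Lemma~\ref{lem boundary dbl}). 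There is also a second family of parabolic identities $\hat\id_i=\id_{\cal A_i^\pm}$ that your parametrisation by subsets $S$ omits entirely. What you do correctly compute --- and it agrees with the paper, since your $\mu=\frac{\al m}{2(1+\al(m-2))}$ is exactly $\eta_\al(m-1)$ --- is the spectrum of $\id_{\cal A_{m-1}}$ inside $M_\al(\cal A_{n-1})$; this reproves, in a special case, Lemmas~\ref{lem id eigval}, \ref{lem co an} and~\ref{lem eigval gn}, which is only a preliminary step of the paper's proof.

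The second, independent gap is that the identities $e_S$ of parabolic subalgebras are not the primitive linear idempotents. By Definition~\ref{def linidempot} the linear idempotents are closed under differences $e-f$ with $f\in A^e_1$, and the primitive ones in the theorem are $e_i=\id_{\cal A_i}-\id_{\cal A_{i-1}}$ and $\hat e_i=\id_{\cal A_i^\pm}-\id_{\cal A_i}$ along a chain $\cal A_0\subseteq\dotsm\subseteq\cal A_n\subseteq\cal A_n^\pm$. Passing from $\Spec(\id_i)$ to $\Spec(e_i)$ is the substantive content: one needs that $\ad(\id_i)$ and $\ad(\id_{i-1})$ commute and are simultaneously diagonalisable (which rests on the Seress property, Lemma~\ref{lem coset ax}.ii and Theorem~\ref{thm fazit seress}), and then an analysis of which pairs of eigenvalues can occur on a common eigenvector, via the inclusions $A^{i-1}_1\subseteq A^i_1$, $A^i_0\subseteq A^{i-1}_0$ and $A^i_{\hat\eta}\subseteq A^{i-1}_{\{\eta,\hat\eta\}}$, to prune the Minkowski difference $\Spec(\id_i)-\Spec(\id_{i-1})$ down to the seven values listed in the theorem. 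None of this appears in your proposal; the ``genuinely separate step'' you defer at the end is the fusion-rule verification, not this, so the deferral does not cover the missing argument.
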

  \noindent
  When $\al = \frac{1}{4}$, by Theorem~\ref{thm dlmn}, due to \cite{dlmn},
  the specialisation of Theorem~\ref{thm idempots an} in Lemma~\ref{lem hrs an 1/4}
  determines the highest weights of the Virasoro algebra
  occuring at weight $2$ in the lattice vertex algebra of the root system $\sqrt2\rt A_n$,
  giving a proof of the results of \cite{lwhwyamada} that generalises to other situations.
  
  Finally in Section~\ref{sec invols} we consider $\rt D_n$,
  where we observe some new bijections between involutions in the group
  and idempotents in the algebra, via their fusion rules.
  \begin{introthm}[\ref{thm invols dn}]
    In $\Aut(W(\rt D_n)/\Z(W(\rt D_n)))$,
    $n-3$ conjugacy classes of involutions
    are realised as Miyamoto involutions of linear idempotents in $M_\al(\cal D_n)$.
  \end{introthm}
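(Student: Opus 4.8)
I work throughout over the field $K=\QQ(\al)$, or a suitable algebraic extension, chosen as in Theorem~\ref{thm fazit seress} so that the linear idempotents below exist, \ie so that each scalar $1+(\text{number of lines through a point})\cdot\al$ is invertible and no eigenvalue that occurs equals $1$; this excludes only finitely many values of $\al$, which can be listed explicitly. Coordinatise the root system of $W(\rt D_n)$ as $\{\pm e_i\pm e_j:i<j\}$, write $t_{ij}^{\pm}$ for the reflection in $e_i\mp e_j$, so that the Fischer space $\cal D_n$ has the $t_{ij}^{\pm}$ as points and the $A_2$-subsystems as lines, and note that the reflections form a single class of $3$-transpositions. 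By Theorem~\ref{thm vreg} the $3$-transposition group of $\cal D_n$ satisfies Hypothesis~\ref{hyp vreg}, so by Theorem~\ref{thm fazit seress} each linear idempotent of $M_\al(\cal D_n)$ is Seress; its eigenvalues apart from $1$ and $0$ then form the odd part of a $\ZZ/2$-graded fusion law, which equips it with a well-defined Miyamoto involution in $\Aut(M_\al(\cal D_n))$. The plan is therefore to produce, for each of $n-3$ values of a rank parameter $k$, a linear idempotent $\iota_k$ whose Miyamoto involution $\tau_k:=\tau_{\iota_k}$ is an \emph{honest} Fischer-space automorphism of $\cal D_n$ — hence an element of $\Aut(W(\rt D_n)/\Z(W(\rt D_n)))$ — and then to check that the $\tau_k$ realise $n-3$ distinct conjugacy classes.

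The raw material is the chain of parabolic subgroups $W(\rt D_3)<W(\rt D_4)<\dots<W(\rt D_{n-1})<W(\rt D_n)$ — one $D$-shaped sub-diagram of $\rt D_n$ of each rank, of which there are $n-3$ once the trivial top case is set aside — together with the identity elements $e_3,\dots,e_n$ of the corresponding parabolic subalgebras $M_\al(\cal D_k)$, which are linear idempotents by Definition~\ref{def linidempot}, and their pairwise differences $e_k-e_j$, which are again linear idempotents since $e_j$ lies in the $1$-eigenspace of $e_k$. Using the Matsuo product together with the transitivity furnished by Hypothesis~\ref{hyp vreg}, I would compute the action of these idempotents on $M_\al(\cal D_n)$ in closed form, extending the method of Section~\ref{sec eigvals an} from $\rt A_n$ to $\rt D_n$: a bare $e_k$ already sends each antisymmetric vector $t_{pq}^{+}-t_{pq}^{-}$ (with $p$ in, $q$ outside the parabolic) to a scalar multiple of itself, but it also carries spurious eigenvalues on the symmetric vectors $t_{pq}^{+}+t_{pq}^{-}$, and the purpose of passing to the right combination $\iota_k$ of the $e_k$ and their differences is to absorb those spurious eigenspaces into the $1$- and $0$-eigenspaces, leaving the odd part of $\iota_k$ spanned exactly by the antisymmetric vectors $t_{pq}^{+}-t_{pq}^{-}$.

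Granting this, $\tau_k$ fixes $M_1(\iota_k)\oplus M_0(\iota_k)$ and negates the span of the $t_{pq}^{+}-t_{pq}^{-}$, and I would identify it with conjugation by a signed permutation $w_k\in W(\rt B_n)$, namely the product of the sign changes on the coordinates moved by the relevant sub-diagram: this $w_k$ manifestly normalises $W(\rt D_n)$, fixes $\iota_k$, and has exactly the same $\pm1$-eigenspaces on $M_\al(\cal D_n)$, so that $\tau_k^{-1}\circ\mathrm{conj}_{w_k}$ is an automorphism fixing $\iota_k$ and trivial on the whole eigenspace decomposition, hence the identity. Passing to the central quotient, $\tau_k$ becomes an involution of $\Aut(W(\rt D_n)/\Z(W(\rt D_n)))$. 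Finally, the $\tau_k$ for distinct $k$ are non-conjugate there: the fixed-point set of $\tau_k$ on $\cal D_n$ — equivalently the cycle-and-sign type of $w_k$, equivalently the dimensions of the eigenspaces of $\iota_k$ — determines $k$ and is a conjugacy invariant, so the $n-3$ involutions $\tau_3,\dots,\tau_{n-1}$ lie in $n-3$ distinct conjugacy classes, each realised as the Miyamoto involution of a linear idempotent, which is the assertion.

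The main obstacle is the eigenvalue bookkeeping of the second paragraph: pinning down precisely which combination $\iota_k$ of parabolic identities and their differences has odd part spanned exactly by the antisymmetric vectors $t_{pq}^{+}-t_{pq}^{-}$, so that $\tau_k$ does not pick up further eigenspaces and become an exotic automorphism of $M_\al(\cal D_n)$ lying outside $\Aut(W(\rt D_n)/\Z(W(\rt D_n)))$. This is exactly where the Seress property of Theorem~\ref{thm fazit seress}, the regularity coming from Hypothesis~\ref{hyp vreg}, and the combinatorics of $A_2$-subsystems in $\rt D_n$ have to be used together, and it is also the step most sensitive to the choice of ground field $K$.
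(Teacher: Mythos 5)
Your overall architecture (parabolic chain $\cal D_3\subseteq\dotsb\subseteq\cal D_{n-1}\subseteq\cal D_n$, explicit identification of the involution with a product of sign changes normalising $W(\rt D_n)$, non-conjugacy via dimensions of the $(-1)$-eigenspace) matches the paper, but there is a genuine gap at the step that actually produces the Miyamoto involutions. You assert that because a linear idempotent is Seress, ``its eigenvalues apart from $1$ and $0$ form the odd part of a $\ZZ/2$-graded fusion law.'' The Seress property only controls $1\star\phi$ and $0\star\phi$; it says nothing about $\phi\star\psi$ for the remaining eigenvalues, and the grading you posit is in fact false for $\id_{\cal D_i}$: its spectrum is $\{1,0,\eta_\al(i),\eta'_\al(i)\}$ and (see Table~\ref{tbl-di-fus}) $\eta_\al(i)\star\eta_\al(i)\ni\eta_\al(i)$, so the map negating both non-$\{1,0\}$ eigenspaces is not an automorphism. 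The correct grading is $\{1,0,\eta_\al(i)\}\cup\{\eta'_\al(i)\}$, with the $\eta_\al(i)$-eigenspace sitting in the \emph{even} part without being absorbed into the $1,0$-eigenspaces, and establishing this requires the separate combinatorial work of Lemma~\ref{lem id dn z2}: identify the $\eta'_\al(i)$-eigenspace as the span of the antisymmetric vectors $x^+-x^-$, $x\in\cal A_{i-1}^\sim$, write down the candidate involution as an explicit permutation of the points of $\cal D_m$, and verify by a case analysis of $x^\ep\wedge y^\eta$ across $\cal D_i$, $\cal D_i^\sim$, $\cal D_i^{\not\sim}$ (using Lemma~\ref{lem sim not sim}) that it is an algebra automorphism. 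None of this follows from Theorem~\ref{thm fazit seress}.

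Your proposed repair --- replacing the bare $e_k$ by a combination $\iota_k$ of parabolic identities and their differences so as to ``absorb the spurious eigenspaces into the $1$- and $0$-eigenspaces'' --- cannot succeed. On a simultaneous eigenvector the eigenvalue of such a combination is an alternating sum $\sum_i(-1)^{i+1}\lm_i$ of eigenvalues of the $\id_i$, and the linear-independence argument in the proof of Theorem~\ref{thm fazit seress} (the functions $\frac{1}{2+\al k_i}$ are $\FF$-linearly independent in $\FF(\al)$) shows precisely that for an indeterminate $\al$ such a sum equals $1$ or $0$ only in the trivial cases already accounted for. So no linear idempotent built from your chain has the $\eta$-type eigenspaces collapsed into its $1,0$-eigenspaces; the grading must be proved directly for $\id_{\cal D_i}$ as above. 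Once that lemma is in place, the rest of your argument (the sign-change realisation, the descent to $\Aut(W(\rt D_n)/\Z(W(\rt D_n)))$, and the count of $n-3$ pairwise non-conjugate classes for $3\leq i<n$) goes through essentially as in the paper.
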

  Note that, as $\cal A_n^\pm = \cal D_{n+1}$,
  in these last two sections we are considering the same object
  from two (combinatorially) different points of view.
  
  I would like to thank Sergey Shpectorov for his supervision during my PhD,
  of which this work forms a part;
  I would also like to thank the referee,
  whose comments greatly improved the clarity of this paper.

\section{Preliminaries}
	\label{sec pre}
	
  \begin{definition}
    \label{def 3trgp}
    A {\em $3$-transposition group} $(G,D)$ consists of a group $G$
    generated by a set of involutions $D\subseteq G$ such that
    \begin{center}
      i. $D$ is closed under $G$-conjugation, \quad and \quad
      ii. for any $c,d\in D$, $\size{cd}\leq3$.
    \end{center}
  \end{definition}
  
  For further material on $3$-transposition groups,
  including their classification,
  we refer to \cite{asch3} or \cite{hall-genth3}.
  
  Recall that any $3$-transposition group $(G,D)$
  is uniquely characterised, up to center, by a
  
  \begin{definition}
    \label{def fs}
    A {\em Fischer space $\cal G$} is a graph
    whose lines are sets of vertices of size $3$
    such that, if $\ell_1,\ell_2$ are two distinct intersecting lines,
    \begin{enumerate}
      \itemsep0pt
      \item $\size{\ell_1\cap\ell_2}=1$ and
      \item their points span a subspace isomorphic to
        $\cal P_2^\vee$ or $\cal P_3$ in Figure~\ref{fig projpl}.
    \end{enumerate}
  \end{definition}
  
	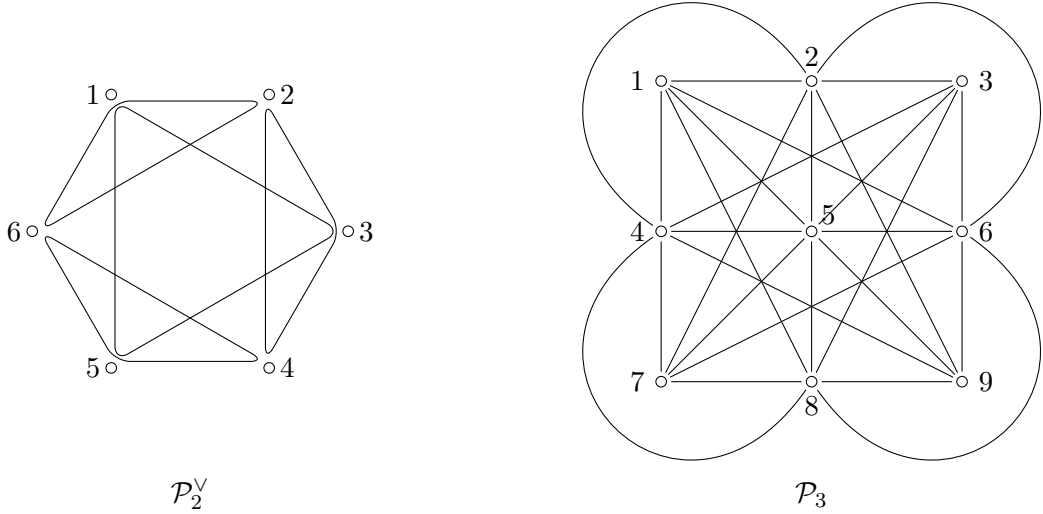
\begin{figure}[ht]
	\begin{center}
	\begin{tikzpicture}
		\useasboundingbox (-3.5cm,3.5cm) rectangle (3.5cm,-3.5cm);
		\node at (0,-3.5cm) {$\cal P_2^\vee$};

		\tikzstyle{every node}=[draw,circle,fill=white,minimum size=4pt,inner sep=0pt]
		\draw
			node at +(120:2.1cm) [label=left:$1$] {}
			node at +(60:2.1cm) [label=right:$2$] {}
			node at +(0:2.1cm) [label=right:$3$] {}
			node at +(300:2.1cm) [label=right:$4$] {}
			node at +(240:2.1cm) [label=left:$5$] {}
			node at +(180:2.1cm) [label=left:$6$] {}
		;

		\draw[rounded corners=0.2cm] (120:2cm) -- (60:2cm) -- (180:2cm) -- cycle;
		\draw[rounded corners=0.2cm] (60:2cm) -- (0:2cm) -- (300:2cm) -- cycle;
		\draw[rounded corners=0.2cm] (180:2cm) -- (300:2cm) -- (240:2cm) -- cycle;
		\draw[rounded corners=0.2cm] (120:2cm) -- (0:2cm) -- (240:2cm) -- cycle;

	\end{tikzpicture}
	\quad\quad\quad
	\begin{tikzpicture}
		\useasboundingbox	(-1.5cm,1.5cm) rectangle (5.5cm,-5.5cm);
		\node at (2cm,-5.5cm) {$\cal P_3$};

		\tikzstyle{every node}=[draw=white,ultra thick,
			circle,fill=white,minimum size=6pt,inner sep=0pt]
		\draw (0,0) node (1) [label=left:$1$] {}
			++(0:2cm) node (2) [label=above:$2$] {}
			++(0:2cm) node (3) [label=right:$3$] {}
			++(270:2cm) node (6) [label=right:$6$] {}
			++(180:2cm) node (5) [label=above right:$5$] {}
			++(180:2cm) node (4) [label=left:$4$] {}
			++(270:2cm) node (7) [label=left:$7$] {}
			++(0:2cm) node (8) [label=below:$8$] {}
			++(0:2cm) node (9) [label=right:$9$] {}
		;
		\tikzstyle{every node}=[draw,
			circle,fill=white,minimum size=4pt,inner sep=0pt]
		\draw (1) node {}
			(2) node {}
			(3) node {}
			(4) node {}
			(5) node {}
			(6) node {}
			(7) node {}
			(8) node {}
			(9) node {}
		;

		\draw (1) -- (2) -- (3);
		\draw (4) -- (5) -- (6);
		\draw (7) -- (8) -- (9);
		\draw (1) -- (4) -- (7);
		\draw (2) -- (5) -- (8);
		\draw (3) -- (6) -- (9);
		\draw (1) -- (5) -- (9);
		\draw (3) -- (5) -- (7);

		\draw (1) to (6);
		\draw[shift=(1)] (6) .. controls (330:7.5cm) and (300:7.5cm)	.. (8);
		\draw (8) to (1);

		\draw (3) to (4);
		\draw[shift=(3)] (4) .. controls (210:7.5cm) and (240:7.5cm)	.. (8);
		\draw (8) to (3);

		\draw (7) to (2);
		\draw[shift=(7)] (2) .. controls (60:7.5cm) and (30:7.5cm)	.. (6);
		\draw (6) to (7);

		\draw (9) to (2);
		\draw[shift=(9)] (2) .. controls (120:7.5cm) and (150:7.5cm)	.. (4);
		\draw (4) to (9);
	\end{tikzpicture}
	\end{center}
	\caption{
		The dual affine plane $\cal P_2^\vee$
		and the affine plane $\cal P_3$
	}
	\label{fig projpl}
	\end{figure}
	
	Namely, for $(G,D)$ a $3$-transposition group
	its Fischer space $\cal G$ has point set $D$
	and lines $\{c,d,e\}$ for any $c,d,e\in D$
	such that $\la c,d,e\ra\cong\Sym(3)$.
	
	Some interesting Fischer spaces are Weyl groups $W(\rt X_n)$ of root systems $\rt X_n$
	(for the latter two topics, we refer to \cite{carter});
  Recall that the Weyl group of $\rt A_n$ is $\Sym(n+1)$
  and the Weyl group of $\rt D_n$ is $2^n:\Sym(n+1)$.
  The set of reflections $D$ coming from the roots
  make $(W(\rt X_n),D)$ a $3$-transposition group in each case.
  When $\rt X_n$ is simply-laced, as $\rt A_n,\rt D_n$ are, we use $\cal X_n$
	to denote the Fischer space of its Weyl group.

  For distinct points $x,y$ in a Fischer space $\cal G$,
  we write $x\sim y$ if there exists a line in $\cal G$ containing both $x$ and $y$,
  and $x\not\sim y$ otherwise.
  If $x\sim y$, the line $\ell$ containing $x$ and $y$ is unique
  by i. of Definition~\ref{def fs}.
  We write $x\wedge y$ for $\ell = \{ x,y,x\wedge y \}$.
  
  In \cite{matsuo}, Matsuo introduced an algebra on Fischer spaces:
  
  \begin{definition}
    \label{def matsuo}
    The {\em Matsuo algebra $M_\al(\cal G)_R$}
    of the Fischer space $\cal G$ over the ring $R$ containing $\frac{\al}{2}$
    is the free $R$-module spanned by the points of $\cal G$
    together with the bilinear multiplication, for $x,y$ points of $\cal G$,
    \begin{equation}
      xy = \begin{cases}
        x & \text{ if } x = y, \\
        0 & \text{ if } x\not\sim y, \\
        \frac{\al}{2}(x + y - x\wedge y) & \text{ if } x\sim y.
      \end{cases}
    \end{equation}
  \end{definition}
  
  We view $\cal G$ as embedded in $M_\al(\cal G)_R$,
  so that $x\in\cal G$ is an idempotent, that is, $xx = x$.
  The following definitions come with a view towards the idempotents in Matsuo algebras.
  
  Suppose that $A$ is an algebra over a ring $R$.
  For arbitrary $x\in A$,
  write $\ad(x)$ for the adjoint map in $\End(A)$
  that is left-multiplication: $\ad(x)\colon y\to xy$.
  The eigenvalues, eigenvectors and eigenspaces of $x$
  are the eigenvalues, eigenvectors and eigenspaces of $\ad(x)$.
  The element $x$ is also said to be {\em diagonalisable}
  if $\ad(x)$ is diagonalisable as a matrix,
  that is, there exists a basis of $A$ consisting of eigenvectors of $x$.
  We write, for $\phi\in R$,
  \begin{equation}
    A^x_\phi = \{ y\in A \mid xy = \phi y \}
  \end{equation}
  for the $\phi$-eigenspace of $x$,
  and extend the notation so that, for $\Phi\subseteq R$ a set,
  \begin{equation}
    A^x_\Phi = \bigoplus_{\phi\in\Phi} A^x_\phi,
  \end{equation}
  including $A^x_\emptyset = 0$.
  
  \begin{definition}
    \label{def fusruls}
    {\em Fusion rules} are a set $\Phi\subseteq R$
    together with a symmetric map $\star\colon\Phi\times\Phi\to2^\Phi$.
    
    A diagonalisable idempotent $x\in A$ is a {\em $\Phi$-axis}
    if all of its eigenvalues lie in $\Phi$ and
    \begin{equation}
      A^x_\phi A^x_\psi \subseteq A^x_{\phi\star\psi} = \bigoplus_{\chi\in\phi\star\psi} A^x_\chi,
    \end{equation}
    that is, the product $yz$ of a $\phi$-eigenvector with a $\psi$-eigenvector
    is in the span of $\chi$-eigenvectors with $\chi\in\phi\star\psi$.
  \end{definition}
  
  \begin{definition}
    \label{def seress}
    Fusion rules $\Phi$ containing $0,1$ are {\em Seress}
    if $1\star\phi \subseteq \{\phi\} \supseteq 0\star\phi$ for all $\phi\in\Phi$.
    In particular, this means $1\star0=\emptyset$.
  \end{definition}
  
  We observe that an idempotent in an associative algebra
  has eigenvalues $1,0$ and its eigenvectors multiply according to
  $1\star1 = \{1\},0\star0=\{0\}$ and $1\star0=\emptyset$,
  so it satisfies Seress fusion rules.

  \begin{lemma}
    \label{lem weak ser}
    An element $e\in A$ has fusion rules $\Phi$
    which are Seress
    if and only if $e$ associates with its $1,0$-eigenspace $A^e_{\{1,0\}}$.
  \end{lemma}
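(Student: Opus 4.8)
The plan is to recognise the property ``$e$ associates with $A^e_{\{1,0\}}$'' as a restatement, through the eigenvector decomposition, of a pair of eigenspace-product inclusions, and then to see that these same inclusions are exactly what makes a (minimal) choice of fusion rules for $e$ Seress. First I fix conventions. Since $e$ has fusion rules in the sense of Definition~\ref{def fusruls} it is in particular diagonalisable, so $A=\bigoplus_{\phi\in\Phi}A^e_\phi$ where $\Phi$ is the set of eigenvalues of $e$; this always contains $1$, as $e\in A^e_1$, and if $0\notin\Phi$ I adjoin it with $A^e_0=0$. By ``$e$ associates with $A^e_{\{1,0\}}$'' I mean that $e(uv)=u(ev)$ for all $u\in A^e_{\{1,0\}}$ and all $v\in A$; equivalently $\ad(e)$ and $\ad(u)$ commute, equivalently the associator $(u,v,e):=(uv)e-u(ve)$ vanishes for all $v$ (using commutativity of $A$). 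This is genuinely weaker than the naive $(eu)v=e(uv)$, which fails in general already on eigenspaces for eigenvalues other than $0$ and $1$, so it is essential to keep the identity in the form $e(uv)=u(ev)$.

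Next I eliminate the existential quantifier over fusion rules. If $e$ carries Seress fusion rules $\star$ (Definition~\ref{def seress}), then for $\epsilon\in\{0,1\}$ and any eigenvalue $\psi$ we have $A^e_\epsilon A^e_\psi\subseteq A^e_{\epsilon\star\psi}\subseteq A^e_\psi$, because $\epsilon\star\psi\subseteq\{\psi\}$ and the assignment of eigenspaces is monotone with $A^e_\emptyset=0$; in particular $A^e_1A^e_0\subseteq A^e_1\cap A^e_0=0$. Conversely, given the inclusions $A^e_\epsilon A^e_\psi\subseteq A^e_\psi$ for all $\epsilon\in\{0,1\}$ and all eigenvalues $\psi$, I equip $\Phi$ with the minimal fusion rules, $\phi\star\psi=\{\chi\in\Phi:\text{the }A^e_\chi\text{-component of }A^e_\phi A^e_\psi\text{ is nonzero}\}$, which are symmetric since $A$ is commutative; the given inclusions say precisely $1\star\psi\subseteq\{\psi\}\supseteq 0\star\psi$, so $\star$ is Seress. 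Thus ``$e$ has Seress fusion rules'' is equivalent to the statement that $A^e_\epsilon A^e_\psi\subseteq A^e_\psi$ for all $\epsilon\in\{0,1\}$ and all eigenvalues $\psi$ of $e$.

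It remains to match this eigenspace condition with ``$e$ associates with $A^e_{\{1,0\}}$''. For the forward direction, fix $\epsilon\in\{0,1\}$, $u\in A^e_\epsilon$ and a $\psi$-eigenvector $v$; then $uv\in A^e_\psi$ (possibly $uv=0$), so $e(uv)=\psi(uv)=u(\psi v)=u(ev)$, and this extends linearly in $v$ over a basis of eigenvectors, giving $e(uv)=u(ev)$ for all $v\in A$. For the converse, assume $e(uv)=u(ev)$ for all $u\in A^e_{\{1,0\}}$, $v\in A$; then for $u\in A^e_\epsilon$ and $v\in A^e_\psi$ we get $e(uv)=u(ev)=\psi(uv)$, so $uv\in A^e_\psi$, that is $A^e_\epsilon A^e_\psi\subseteq A^e_\psi$. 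Combining the two paragraphs closes the equivalence.

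None of the individual computations is hard; the step most in need of care, and the closest thing to a genuine obstacle, is the bookkeeping of the first two paragraphs: pinning down that ``associates with'' is the commuting-adjoints identity $e(uv)=u(ev)$ and not a stronger associativity that would be false here, and carrying out the reduction that turns ``$e$ carries some Seress fusion rules'' into the two concrete eigenspace inclusions by passing to the minimal fusion rules. Once those are settled, each implication is a one-line computation in the eigenvector decomposition of $A$.
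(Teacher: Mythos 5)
Your proposal is correct and follows essentially the same route as the paper: both arguments reduce the Seress condition to the eigenspace inclusions $A^e_\phi A^e_{\{1,0\}}\subseteq A^e_\phi$ and then identify these, via the eigenvector decomposition and the identity $e(xz)=\phi(xz)=(ex)z$, with the vanishing of the relevant associator. Your version is merely more explicit about the conventions (adjoining $0$ to $\Phi$, passing to minimal fusion rules), which the paper leaves implicit.
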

  \begin{proof}
  		Suppose that $e\in A$ a $\Phi$-axis.
		Let $x,z\in A$ be arbitrary.
		By linearity, we may take $x\in A^e_\phi$.
		Then $ex = \phi x$ and in particular $(ex)z = \phi xz$.

		Observe that $xz\in A^e_\phi$ for any $x\in A^e_\phi,z\in A^e_{\{1,0\}}$,
		for any $\phi\in\Phi$,
		if and only if $\Phi$ is Seress.
		Furthermore $xz\in A^e_\phi$ if and only if $e(xz) = \phi xz$,
		that is, $e(xz) = (ex)z$.
  \end{proof}
  
  	\begin{table}[ht]
	\begin{center}
	\renewcommand{\arraystretch}{2}
	\setlength{\tabcolsep}{0.75em}
	\begin{tabular}{c|ccc}
			$\star$	& $1$ & $0$ & $\al$ \\
		\hline
			$1$ & $\{1\}$ & $\emptyset$ & $\{\al\}$ \\
			$0$ &	& $\{0\}$ & $\{\al\}$ \\
			$\al$ &	&	& $\{1,0\}$ \\
	\end{tabular}
	\end{center}
	\caption{Jordan fusion rules $\Phi(\al)$}
	\label{tbl jordan}
    \end{table}
	
  The Jordan fusion rules of Table \ref{tbl jordan} take a primary role in this work.
  It is not difficult to see that they are Seress, and 
  
  \begin{lemma}[\cite{hrs}, Theorem 6.2]
    \label{lem matsuo axes}
    Any point $x\in\cal G\subseteq M_\al(\cal G)_R$ is a $\Phi(\al)$-axis.
    \qed
  \end{lemma}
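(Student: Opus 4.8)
The plan is to compute the adjoint $\ad(x)$ of a point $x\in\cal G$ directly from the multiplication table of Definition~\ref{def matsuo}, and to read off both the eigenspace decomposition and the fusion products from it.

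First I would fix $x$ and sort the spanning set $\cal G$ of $M_\al(\cal G)_R$ into three parts: the point $x$ itself; the lines $\ell_1,\dots,\ell_k$ through $x$, written $\ell_i=\{x,y_i,z_i\}$; and the points $w$ with $w\not\sim x$. Since $xw=0$ for each such $w$, while inside the span of one line $\{x,y,z\}$ one has $x\wedge y=z$, $x\wedge z=y$, $y\wedge z=x$, and hence $x(y-z)=\al(y-z)$ and $x(y+z-\al x)=0$, the vectors
\[
  x;\qquad y_i-z_i\ (1\le i\le k);\qquad y_i+z_i-\al x\ (1\le i\le k);\qquad w\ (w\not\sim x)
\]
form a basis of $M_\al(\cal G)_R$ consisting of eigenvectors of $x$ with respective eigenvalues $1$, $\al$, $0$, $0$. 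Thus $x$ is diagonalisable with eigenvalues in $\{1,0,\al\}$, and
\[
  M^x_1=Rx,\qquad M^x_\al=\bigoplus_i R(y_i-z_i),\qquad M^x_0=\Bigl(\bigoplus_i R(y_i+z_i-\al x)\Bigr)\oplus\bigoplus_{w\not\sim x}Rw .
\]

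It remains to verify the products of Table~\ref{tbl jordan}. The entries $1\star1=\{1\}$, $1\star\al=\{\al\}$, $1\star0=\emptyset$ are immediate, since $M^x_1=Rx$ acts on an eigenvector as a scalar, so $x\cdot M^x_\phi=\phi\,M^x_\phi$. For $\al\star\al$, $\al\star0$, $0\star0$ it suffices by bilinearity to multiply pairs of the spanning vectors listed above; such a product expands into products $pq$ of points, each lying in $\mathrm{span}\{p,q,p\wedge q\}$. By the second defining condition of Definition~\ref{def fs}, the few points occurring in any one such product --- the point $x$, two of the chosen $y_i,z_i$ or a $w$, and their wedges --- lie together in a subspace isomorphic to a subspace of $\cal P_2^\vee$ or $\cal P_3$ of Figure~\ref{fig projpl}, so the calculation takes place inside a small, explicitly known configuration. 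One then checks in each case that rewriting the product in the eigenbasis of $x$ yields only eigenvalues allowed by the table; for example, if $\{x,y,z\}$ and $\{x,y',z'\}$ span a $\cal P_2^\vee$ with $y\sim y'$ and $z\sim z'$, then
\[
  (y-z)(y'-z')=\al^2 x+\tfrac{\al}{2}\bigl((y+z-\al x)+(y'+z'-\al x)-2(y\wedge y')\bigr)\in M^x_1\oplus M^x_0 ,
\]
since $y\wedge y'\not\sim x$ in that configuration.

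The substantive work is organising this finite case analysis, and in particular proving the two geometric facts that make the entries come out exactly right: that $w\not\sim x$ and $w'\not\sim x$ with $w\sim w'$ forces $w\wedge w'\not\sim x$, so $M^x_0\cdot M^x_0$ acquires no spurious $M^x_\al$ or $M^x_1$ component; and that whenever a wedge of two of the chosen points lies on a line through $x$, it appears paired with a second such wedge, so the unwanted $M^x_\al$ and $M^x_1$ contributions cancel. Both reduce, via the second condition of Definition~\ref{def fs}, to the elementary observations that in $\cal P_3$ any two points are collinear, whereas in $\cal P_2^\vee$ each point has a unique non-collinear partner. With these in hand the entries $\al\star\al=\{1,0\}$, $\al\star0=\{\al\}$, $0\star0=\{0\}$ of Table~\ref{tbl jordan} follow, so every $x\in\cal G$ is a $\Phi(\al)$-axis.
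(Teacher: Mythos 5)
Your proposal is correct, but note that the paper itself gives no proof of this lemma: it is imported wholesale from \cite{hrs}, Theorem~6.2, with a \verb|\qed|. What you have written is therefore a self-contained verification of a fact the paper only cites, and it is essentially the standard argument (and close in spirit to the one in \cite{hrs}). Your eigenbasis $x$; $y_i-z_i$; $y_i+z_i-\al x$; $w\not\sim x$ is right, the worked $\cal P_2^\vee$ computation is correct, and the two geometric facts you isolate are exactly the right ones and are correctly reduced to the structure of $\cal P_2^\vee$ and $\cal P_3$: if $w,w'\not\sim x$ and $w\sim w'$, the plane spanned by $\{w,w',w\wedge w'\}$ and a putative line through $w\wedge w'$ and $x$ cannot be $\cal P_3$ (all points collinear) and cannot be $\cal P_2^\vee$ ($x$ would have two non-neighbours), so $w\wedge w'\not\sim x$; and for two lines through $x$ the off-line wedges either coincide in the unique non-neighbour of $x$ (the $\cal P_2^\vee$ case) or pair up along the remaining two lines through $x$ (the $\cal P_3$ case), which is what keeps $\al\star\al$ inside $\{1,0\}$ and $\al\star 0$ inside $\{\al\}$.

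Two small caveats. First, your list of vectors is a basis only when $2$ is invertible, since recovering $y_i,z_i$ from $y_i-z_i$, $y_i+z_i-\al x$ and $x$ requires dividing by $2$; Definition~\ref{def matsuo} only asks that $R$ contain $\frac{\al}{2}$, so strictly speaking you should add the standing hypothesis $\frac{1}{2}\in R$ (which the paper does impose from Section~\ref{sec idempots} onward, and which \cite{hrs} assumes). Second, you explicitly defer the remaining products --- $(y_i+z_i-\al x)(y_j+z_j-\al x)$, $(y_i+z_i-\al x)w$, $(y_i-z_i)(y_j+z_j-\al x)$, and the diagonal case $(y_i-z_i)^2=(1-\al)(y_i+z_i-\al x)+\al(2-\al)x$ --- to ``a finite case analysis''; these all do go through by the same two principles, but in a complete write-up they need to be displayed, since the diagonal and off-diagonal cases behave differently. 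As it stands the proposal is a correct and well-organised sketch whose only gap is bookkeeping, not ideas.
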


  We finally note
  	\begin{theorem}[\cite{dlmn} Theorem 3.1]
		\label{thm dlmn}
		The Matsuo algebra $M^{1/2}_{1/4}(\cal X_n^\pm)$
		(\cf Definition~\ref{def dbl}),
		modulo its radical (\cf \eqref{eq form}),
		is the weight-$2$ subalgebra
		of a vertex algebra $V_{\sqrt2\rt X_n}^+$
		when $\cal X_n$ is the Fischer space of a simply-laced root system $\rt X_n$,
		that is, $\rt A_n,n\in\NN$, $\rt D_n,n\geq4$, or $\rt E_6,\rt E_7,\rt E_8$.
		The radical of $M^{1/2}_{1/4}(\cal A_n^\pm)$ is $0$ for all $n$.
        \qed
	\end{theorem}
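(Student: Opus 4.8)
The plan is to construct an explicit surjection of algebras
$\vf\colon M\to(V^+_{\sqrt2\rt X_n})_2$, where $M:=M^{1/2}_{1/4}(\cal X_n^\pm)$, and to identify its kernel with the radical of the canonical form.
Write $L=\sqrt2\rt X_n$; since $\rt X_n$ is simply laced, $L$ is an even lattice whose minimal vectors, of norm $4$, are the scaled roots $\bt=\sqrt2\al$.
The lattice construction gives, as a vector space,
\[
  (V_L^+)_2=\Sym^2(\frak h)\oplus\bigoplus_{\{\bt,-\bt\}}R(e^\bt+e^{-\bt}),
\]
the first summand spanned by the Heisenberg vectors $h(-1)k(-1)\vacuum$ (the $h(-2)\vacuum$ being $\th$-anti-invariant and dropping out), the sum running over the $\th$-orbits $\{\bt,-\bt\}$ of norm-$4$ vectors, \ie over positive roots; the product is the Griess product $a\ast b:=a_{(1)}b$, computed block by block from the lattice operator product expansion, and the canonical form is the invariant form of $V_L$ restricted to weight $2$.

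For each norm-$4$ vector $\bt$ there is a pair of Virasoro vectors of central charge $\tfrac12$, $\om^\pm_\bt=\tfrac1{16}\bt(-1)^2\vacuum\pm\tfrac14(e^\bt+e^{-\bt})$, which are Ising vectors because $L$ has no norm-$2$ vectors; the doubled Fischer space $\cal X_n^\pm$ of Definition~\ref{def dbl} has exactly one point over each pair $(\bt,\pm)$, and I would take $\vf(x)$ to be the unique scalar multiple of the associated $\om^\pm_\bt$ that is idempotent for $\ast$, extended by linearity.
Three things then need checking. (i) $\vf$ is surjective: the $\bt(-1)^2\vacuum$ span $\Sym^2(\frak h)$ because the roots span $\frak h$ and $\{\bt\tensor\bt\}$ spans a symmetric square, and the $e^\bt+e^{-\bt}$ visibly span the remaining summand. (ii) $\vf$ is an algebra homomorphism: by bilinearity this reduces to the defining relations of the doubled Matsuo algebra among images of points, and the products $\om^\ep_\bt\ast\om^{\ep'}_\gm$ fall into cases according to $\la\bt,\gm\ra\in\{\pm4,\pm2,0\}$ and the $\pm$-labels, which together record whether $x=y$, $x\sim y$, or $x\not\sim y$---matching the trichotomy of Definition~\ref{def matsuo}; in the braiding case $\la\bt,\gm\ra=\pm2$ the cross term $(e^\bt)_{(1)}e^\gm=\pm e^{\bt+\gm}$ produces the Ising vector over the norm-$4$ vector $\bt+\gm$, namely $\vf(x\wedge y)$, and matching its coefficient against $\tfrac\al2(\vf(x)+\vf(y)-\vf(x\wedge y))$ is precisely what pins the parameter to $\al=\tfrac14$ and fixes the sign rule---the data carried by the $\pm$-labels and the auxiliary parameter $\tfrac12$. (iii) $\vf$ intertwines the two canonical forms, which can be built into the normalisation of step (ii).
Granting (iii): if $\vf(v)=0$ then $v$ pairs trivially with every $\vf(u)$, hence with all of $M$ by surjectivity, so $v\in\Rad M$; conversely $\vf(\Rad M)$ pairs trivially with $(V_L^+)_2$, and since $V_L$ is a simple vertex algebra its invariant form---hence also its restriction to the $\th$-fixed part in weight $2$---is nondegenerate, so $\vf(\Rad M)=0$. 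Thus $\ker\vf=\Rad M$ and $M/\Rad M\cong(V_L^+)_2$.

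For the final statement, take $\cal X_n=\cal A_n$, so $M=M^{1/2}_{1/4}(\cal A_n^\pm)$. Then $\cal A_n^\pm=\cal D_{n+1}$ has $n(n+1)$ points, while $\dim(V^+_{\sqrt2\rt A_n})_2=\binom{n+1}2+\binom{n+1}2$, the first term the Cartan part and the second one dimension per positive root of $\rt A_n$. The two dimensions agree, so the surjection $\vf$ is an isomorphism and $\Rad M=0$. (Alternatively: the Gram matrix of the canonical form on $M$ has entries taking only the finitely many values prescribed by the combinatorics of $\cal D_{n+1}$, and one proves it nondegenerate by decomposing $M$ under $\Aut(\cal A_n^\pm)$ and checking the form is nonzero on each isotypic summand, or by an induction peeling off a node of $\rt A_n$.)

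The main obstacle is step (ii), and within it the braiding case together with the cocycle bookkeeping: arranging the sign of $(e^\bt)_{(1)}e^\gm$ to agree with the $\pm$-labelling of the doubled Fischer space is delicate, although once set up it is a finite verification. The remainder is organisation and a bounded linear-algebra computation.
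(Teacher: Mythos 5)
You should first be aware that the paper does not prove this statement at all: it is imported verbatim from \cite{dlmn}, Theorem 3.1 (hence the \qed attached to the statement), and the only interface the paper needs is the form \eqref{eq form} and the central-charge normalisation. Your sketch is therefore not an alternative to the paper's argument but a reconstruction of the cited one, and it follows the same strategy as that source: the decomposition $(V^+_{\sqrt2\rt X_n})_2=\Sym^2(\frak h)\oplus\bigoplus_{\{\bt,-\bt\}}\FF(e^\bt+e^{-\bt})$, the Ising vectors $\om^\pm_\bt$ over norm-$4$ vectors, the case analysis on $\la\bt,\gm\ra\in\{0,\pm2,\pm4\}$ matching the trichotomy of Definition~\ref{def matsuo} with $\al=\frac14$, and nondegeneracy of the invariant form of the simple vertex algebra $V_L$ (restricted to the $\th$-fixed weight-$2$ piece) to identify $\ker\vf$ with $\Rad M$. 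The dimension count $\size{\cal A_n^\pm}=\size{\cal D_{n+1}}=n(n+1)=2\binom{n+1}{2}=\dim(V^+_{\sqrt2\rt A_n})_2$ for the final claim is correct. Three points need tightening. First, surjectivity requires more than ``the roots span $\frak h$'': you need the squares $\bt(-1)^2\vacuum$ to span $\Sym^2(\frak h)$, which follows by polarising along root strings, $(\al+\bt)(-1)^2\vacuum-\al(-1)^2\vacuum-\bt(-1)^2\vacuum=2\,\al(-1)\bt(-1)\vacuum$ whenever $\al+\bt$ is again a root, together with irreducibility of $\rt X_n$. Second, the cocycle bookkeeping in (ii)---choosing the section so that $(e^\bt)_{(1)}e^\gm=+e^{\bt+\gm}$ on the pairs with $\la\bt,\gm\ra=-2$, so that the sign of the cross term reproduces the label $(x\wedge y)^{\ep\eta}$ of Definition~\ref{def dbl}---is the genuine mathematical content here; you have only flagged it, and without it $\vf$ need not be a homomorphism for the stated line structure. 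This is where \cite{dlmn} spends its effort. Third, (iii) should not be waved away as ``normalisation'': both \eqref{eq form} and the restricted invariant form satisfy $(ab,c)=(a,bc)$ for their respective products, and an associating form on the Matsuo algebra of a connected Fischer space is determined up to one scalar, which is fixed by $c=\frac12$; so granting (ii), step (iii) is automatic rather than a separate choice. With those repairs your outline is a faithful, if compressed, version of the proof the paper delegates to its reference.
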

  
  	\begin{theorem}[Perron-Frobenius, \cite{agt} Theorem 8.8.1]
		\label{thm perfrob}
		For an irreducible matrix $A$ over $\CC$,
		there exists a real positive eigenvalue $\rho$ of $A$
		such that $\size\lm\leq\size\rho$ for all eigenvalues $\lm$ of $A$,
		and the $\rho$-eigenspace of $A$ is $1$-dimensional.
		\qed
	\end{theorem}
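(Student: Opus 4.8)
\noindent The plan is to establish the classical Perron--Frobenius theorem by the variational (Collatz--Wielandt) method. Here $A$ is understood, as in \cite{agt}, to be a nonnegative real $n\times n$ matrix that is \emph{irreducible}, meaning no simultaneous permutation of rows and columns puts it in a nontrivial block upper-triangular shape; equivalently, $(I+A)^{n-1}$ has all entries strictly positive (so in particular $A\neq0$). The field $\CC$ in the statement enters only because the competing eigenvalues $\lambda$ are allowed to be complex.

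For a nonzero vector $x$ with nonnegative entries set
\[
  r(x) \;=\; \max\{\,t\ge 0 : Ax\ge tx\,\} \;=\; \min_{i\,:\,x_i>0}\frac{(Ax)_i}{x_i},
\]
and let $\rho=\sup_x r(x)$, the supremum taken over the probability simplex $\Delta$. Since $r$ is only upper semicontinuous on $\Delta$, I would instead work on the compact set $K=\{(I+A)^{n-1}x : x\in\Delta\}$, whose members are all strictly positive and on which $r$ is continuous; because $(I+A)^{n-1}$ is order-preserving and commutes with $A$, the inequality $Ax\ge r(x)x$ propagates to $(I+A)^{n-1}x$, so $\sup_K r=\rho$ and this supremum is attained, at some $z>0$.

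Next I would verify that $z$ is a genuine eigenvector. We have $Az\ge\rho z$; were the difference $Az-\rho z$ nonnegative but nonzero, then $(I+A)^{n-1}(Az-\rho z)>0$ strictly, that is, $Aw>\rho w$ for $w=(I+A)^{n-1}z>0$, which would give $r(w)>\rho$, a contradiction. Hence $Az=\rho z$ with $z>0$, and $\rho>0$, since $Az=0$ with $z>0$ and $A\ge0$ would force $A=0$. For the spectral bound, if $Av=\lambda v$ with $v\in\CC^n$ nonzero, then taking absolute values coordinatewise gives $|\lambda|\,|v|\le A|v|$, whence $r(|v|)\ge|\lambda|$ and so $|\lambda|\le\rho$.

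The delicate step is that the $\rho$-eigenspace is one-dimensional. Given a real $\rho$-eigenvector $v\neq0$, I would put $c=\max_i v_i/z_i$ (finite because $z>0$), so that $cz-v\ge0$ with at least one coordinate equal to zero; but $cz-v$ is itself a $\rho$-eigenvector, so if it were nonzero, applying the positive matrix $(I+A)^{n-1}$ would force $(1+\rho)^{n-1}(cz-v)>0$ strictly, contradicting that zero coordinate. Thus $v=cz$, and writing a complex $\rho$-eigenvector as $a+ib$ with $a,b$ real shows $a,b\in\RR z$, so the eigenspace is exactly $\CC z$. The only real obstacle is bookkeeping around irreducibility: it must be invoked precisely, and only, to upgrade ``$\ge$'' to ``$>$'' at the two places where $(I+A)^{n-1}$ is applied, while being careful that $r$ is manipulated only on a domain where it is genuinely continuous.
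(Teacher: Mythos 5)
Your argument is correct, but note that the paper offers no proof to compare it with: Theorem~\ref{thm perfrob} is stated as a classical result, cited to \cite{agt} Theorem 8.8.1, and closed with \verb|\qed|. What you have written is the standard Collatz--Wielandt proof of Perron--Frobenius, and it is sound. You were right to flag that ``irreducible matrix over $\CC$'' must be read as ``irreducible \emph{nonnegative} matrix'' (an arbitrary irreducible complex matrix need not have a positive real eigenvalue); in the paper's only use the matrix is the adjacency matrix of a connected graph, so this reading is the intended one. The two technical points you isolate are indeed the ones that need care: replacing the simplex by its image under $(I+A)^{n-1}$ so that $r$ is continuous and its supremum is attained at a strictly positive vector, and invoking strict positivity of $(I+A)^{n-1}$ exactly twice --- once to upgrade $Az\ge\rho z$ to equality, and once to show that a nonnegative $\rho$-eigenvector with a vanishing coordinate must be zero, which gives one-dimensionality of the eigenspace (the complex case reducing to the real one since $A$ and $\rho$ are real). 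The only residual quibble is the degenerate $1\times1$ zero matrix, for which $\rho=0$ is not positive; this is an edge case of the cited statement itself (a one-vertex graph), not a defect of your argument, and is irrelevant to how the theorem is used in Lemma~\ref{lem units 10eigvect}.
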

    Recall that the adjacency matrix of a connected graph is irreducible,
    and if $A$ is $k$-regular (that is, the neighbourhood of every point has size $k$)
    then $\rho = k$ in Theorem~\ref{thm perfrob}.

\section{$3$-transposition groups}
  \label{sec 3trgps}
  
  Suppose that $(G,D)$ is a $3$-transposition group.
  A subgroup $H\subseteq G$ is {\em parabolic} (also called a $D$-subgroup)
  if $H$ is generated by $H\cap D$.
  A {\em maximal parabolic subgroup} is a parabolic subgroup $H$
  maximal by inclusion among parabolic subgroups.
  We call $(G,D)$ {\em connected} if $D$ is a single conjugacy class,
  that is, $D = d^G$ for any $d\in D$.
  When $(G,D)$ is connected, so is the Fischer space $\cal G$ of $(G,D)$,
  and there exists a constant $k_\cal G\in\NN$ such that
  for any $x\in\cal G$,
  $\size{x^\sim} = \size{\{ y\in\cal G\mid x\sim y \}} = k_\cal G$,
  that is, as a graph $\cal G$ is $k_\cal G$-regular.
  
  The {\em boundary graph $\cal G/\cal H$}
  of an embedding $\cal H\subseteq\cal G$ of Fischer spaces
  is the graph with point set
  \begin{equation}
    \label{eq hsim}
    \cal H^\sim = \{ x\in\cal G\mid x\not\in\cal H, x\sim y\text{ for some }y\in\cal H \}
  \end{equation}
  and lines $\{x,y\}$ for all $x,y\in\cal G\smallsetminus\cal H$
  such that $x\wedge y\in\cal H$.
  
  \begin{definition}
    \label{def vreg}
    An embedding $\cal H\subseteq\cal G$ of Fischer spaces
    is {\em very regular} if $\cal H$ is a maximal subspace in $\cal G$,
    and $\cal H,\cal G,\cal G/\cal H$ are connected.
    
    A parabolic subgroup $H$ of a $3$-transposition group $(G,D)$
    is {\em very regular} if it induces a very regular embedding
    of Fischer spaces $\cal H\subseteq\cal G$.
  \end{definition}
  
  For a very regular embedding $\cal H\subseteq\cal G$
  it follows that $\cal G/\cal H$ is $k^\cal H_\cal G$-regular
  for some $k^\cal H_\cal G\in\NN$.

  We conjecture that an arbitrary maximal connected parabolic subgroup $H$
  of a connected $3$-transposition group $(G,D)$ is very regular;
  this holds for many known examples, as shown in the following Theorem~\ref{thm vreg}.
  
  Recall that $\affrt A_n$ is the affine extension of $\rt A_{n-1}$.
  For a Weyl group $W(\rt X_n)$,
  the transpositions are the conjugacy class of reflections of roots in $\rt X_n$.
  To define the group $G = W_k(\affrt A_n)$ for $k=2,3$,
  let $V$ be the vector space $\FF_k^{n+1}$ with basis $\{v_0,\dotsc,v_n\}$
  and $\hat G$ the semidirect product of $V$ with $\Sym(n+1)$ using the permutation action on the given basis.
  Then $G$ is the quotient $\hat G/\la v_0+\dotsm+v_n\ra$, and
  the transpositions in $G$ are the image of the conjugacy class of $(1,2)^{\Sym(n+1)}$.
  By $3^n:2$ we mean the elementary abelian group $3^n$ extended by an inverting involution,
  unless otherwise indicated.
  In all groups of shape $3^m:2$,
  the transpositions are the unique class of involutions.
  
  \begin{theorem}
    \label{thm vreg}
		The connected maximal parabolic subgroups $H$ of $(G,D)$
		induce very regular Fischer spaces $\cal H\subseteq\cal G$
		when $(G,D)$ is, for any $n\in\NN$,
		the Weyl group of $\rt A_n$, $\rt D_n$, $\rt E_6,\rt E_7,\rt E_8$,
		or $W_k(\affrt A_n)$ for $k=2,3$,
		or $3^n:2$,
		or M.~Hall's $3^{10}:2$.
	\end{theorem}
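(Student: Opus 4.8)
The plan is to verify, case by case according to the families listed, that every connected maximal parabolic subgroup $H$ gives a very regular embedding $\cal H\subseteq\cal G$; by Definition~\ref{def vreg} this means checking three things: (a) $\cal H$ is a maximal subspace of $\cal G$, (b) $\cal H$ and $\cal G$ are connected, and (c) the boundary graph $\cal G/\cal H$ is connected. Point (b) is immediate: maximality of $H$ as a parabolic forces $\cal H$ maximal as a subspace (parabolic subgroups correspond to subspaces), and connectedness of $\cal G$ holds because each group in the list is connected ($D$ is a single class), so I would dispose of (a) and the $\cal H$-half of (b) uniformly. The real work is (c), the connectedness of each boundary graph, and I would organise the proof around an explicit combinatorial description of the maximal parabolics in each family.

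For the Weyl group families I would use the standard description of parabolic subgroups of Coxeter groups as (conjugates of) standard parabolics $W_J$ for $J$ obtained by deleting nodes of the Dynkin diagram; the \emph{maximal} connected ones correspond to removing a single node so that what remains is connected (or, for disconnected-looking subdiagrams, noting the subgroup generated is not a single class and so excluded). In type $\rt A_n$ the maximal connected parabolics are $\Sym(k)\times\Sym(n+2-k)$-type only when one factor is trivial, i.e. $\Sym(n)$ inside $\Sym(n+1)$ acting on transpositions $(i,j)$; here $\cal H^\sim$ consists of the transpositions moving the distinguished point, and one checks directly that any two such transpositions $(i,n+1),(j,n+1)$ are joined through $(i,j)\wedge\cdots$ in the boundary graph — a short argument on triples. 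For $\rt D_n$ and $\rt E_6,\rt E_7,\rt E_8$ I would run the analogous root-system computation: list the node to delete, identify $\cal H^\sim$ as a concrete set of roots, and exhibit a connecting path in $\cal G/\cal H$; the exceptional types are finite checks (ideally reduced to a single generic argument about subsystems of rank $n-1$ inside rank $n$). For $W_k(\affrt A_n)$, $k=2,3$, I would use the affine description $G=\hat G/\la v_0+\dots+v_n\ra$ with $\hat G = \FF_k^{n+1}\semid\Sym(n+1)$: the maximal connected parabolics are the images of the point stabilisers / the $\rt A_{n-1}$-type and translation-type subgroups, and the boundary graph connectedness follows from transitivity of the relevant $\Sym(n+1)$-action combined with the vector-space structure. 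For $3^n:2$ and M.~Hall's $3^{10}:2$, where transpositions form the unique class of involutions, the maximal parabolics are $3^{n-1}:2$-type subgroups; the boundary graph is governed by the action of the elementary abelian part, and connectedness reduces to the statement that the relevant coset geometry on $\FF_3^{n}$ (respectively Hall's configuration) is connected — for Hall's group this is a single explicit verification using the known structure of its Fischer space.

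The main obstacle I expect is (c) for the less symmetric cases: unlike connectedness of $\cal H$ and $\cal G$, which is formal, connectedness of the boundary graph $\cal G/\cal H$ genuinely depends on the combinatorics and can fail in principle, so each family needs its own connecting-path argument rather than a single uniform lemma. A secondary subtlety is bookkeeping: in $\rt D_n$ and in the affine groups there are \emph{several} classes of maximal connected parabolics, and one must not miss a class; I would handle this by first enumerating the node-deletions (or the corresponding subspace types) explicitly, then treating each. Once the boundary graphs are connected, the final clause of Definition~\ref{def vreg} — regularity of $\cal G/\cal H$ as a graph — comes for free via the remark after Definition~\ref{def vreg} (any connected boundary graph of a very regular embedding is $k^\cal H_\cal G$-regular once connectedness is in hand, because the automorphism group $H$ acts transitively on $\cal H^\sim$ in each of these highly symmetric examples), so I would only need to note transitivity of $H$ on $\cal H^\sim$ in passing.
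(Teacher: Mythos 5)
Your overall plan---verify connectedness of the boundary graph $\cal G/\cal H$ family by family, after disposing of maximality and connectedness of $\cal H,\cal G$ formally---is the same as the paper's, and your arguments for $\rt A_n$ and for $3^n:2$ are essentially the ones given there. However, there is a genuine gap in how you propose to enumerate the subgroups that must be checked in the Weyl group cases. In this paper ``parabolic'' means a $D$-subgroup, i.e.\ a subgroup generated by an \emph{arbitrary} subset of the transpositions $D$; for a Weyl group this is an arbitrary reflection subgroup, corresponding to a subsystem of the root system, and is \emph{not} in general conjugate to a standard Coxeter parabolic $W_J$. Your enumeration by ``deleting a node of the Dynkin diagram'' therefore produces the wrong list outside type $\rt A$: for instance $W(\rt A_8)$ and $W(\rt D_8)$ are connected maximal $D$-subgroups of $W(\rt E_8)$ (full-rank subsystems arising from the extended diagram \`a la Borel--de Siebenthal) that are not conjugate to any standard parabolic, so the very subgroups the theorem is about in that case are missed. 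A related subtlety arises already in $\rt D_n$, where one must account for possibly non-conjugate copies of $\rt A_{n-1}$-type subsystems; the paper avoids this entirely by treating $W(\rt D_n)$ as $W_2(\affrt A_{n-1})$ in an explicit matrix model and classifying its connected maximal parabolics directly (two types, $\Sym$-type and $W_k(\affrt A_{n-1})$-type).

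Beyond that, the paper's proof rests on a uniform reduction you do not state: connectedness of $\cal G/\cal H$ follows from the purely group-theoretic condition $D=(H\cap D)\cup d^H$ for $d\in D\smallsetminus H$, because conjugation of $d$ by a transposition $h\in H\cap D$ sends $d$ to $h\wedge d$, so $H$-orbits on $D\smallsetminus H$ are exactly the connected components of the boundary graph. This converts each case into an orbit computation (e.g.\ counting the orbit of $g_{n+1}$ under $\la g_1,\dotsc,g_n\ra$ in the affine cases) rather than a bespoke connecting-path argument, which is where your sketch is vaguest. Finally, note that for $\rt E_6,\rt E_7,\rt E_8$ and for Hall's $3^{10}:2$ the paper does not give a hand proof at all but verifies the condition by computer; if you intend a non-computational argument for the exceptional types you would need substantially more than a ``single generic argument about subsystems of rank $n-1$,'' not least because, as above, the relevant subgroups need not have rank $n-1$.
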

	\begin{proof}
	    Suppose that $\cal H,\cal G$ are connected.
	    We now show that the group-theoretic condition that
	    \begin{equation}
	        \label{eq gpthcond}
	        D=(H\cap D)\cup d^H
	        \text{ for any }
	        d\in D\smallsetminus H
	    \end{equation}
        implies that $\cal G/\cal H$ is connected:
	    namely, the conjugation action of $H$ on $d$
	    is afforded by its generators $H\cap D$,
	    and two elements $d,d'\in D$ are $H$-conjugate
	    if and only if the corresponding points $d,d'\in\cal G$
	    can be path-connected in $\cal G$ by lines nontrivially intersecting $\cal H$.
	    Thus, the proof of this theorem is reduced to showing that \eqref{eq gpthcond} holds in each case.

		Recall that the Weyl group $(G,D)$ of $\rt A_n$
		is $G = \Sym(n+1), D = (1,2)^G$.
		Let $E\subseteq D$ and $S\subseteq\{1,\dotsc,n+1\}$
		be the support of $E$,
		that is, the smallest subset $S$ of $\{1,\dotsc,n+1\}$
		such that any transposition $e\in E$
		is of the form $(s_1,s_2)$ for some $s_1,s_2\in S$.
		Then partition $S$ into orbits $S_1,\dotsc, S_n$ of $\la E\ra$.
		Observe that $\la E\ra \cong \Sym(\size{S_1})\times\dotsm\times\Sym(\size{S_n})$
		and therefore $E$ does not satisfy the hypothesis of connectedness
		unless $S = S_1$ is a single orbit.
		Furthermore if $\size S$ is less than $n$ then $H$ is not maximal.
		Therefore a connected maximal parabolic subgroup $H$ of $G$
		has support $\{1,\dotsc,j-1,j+1,\dotsc,n+1\}$ for some $j$
		and $H\cong\Sym(n)$.
		In these cases let $d = (1,j)$, or $d = (1,2)$ if $j=1$,
		so that $d\in D\smallsetminus(D\cap H)$.
		We see that $D = (H\cap D)\cup d^H$.
		
		As $W(\rt D_n)\cong W_2(\affrt A_{n-1})$ by \cite{hall-genth3},
		we cover it below as part of $W_k(\affrt A_{n-1})$.

		The cases for $W(\rt E_n), n = 6,7,8$,
		were checked in \cite{magma} with the computational assistance
		of Raul Moragues Moncho.

		Suppose that $(G,D)$ comes from $W_k(\affrt A_n)$ when $k=2,3$ and $n\geq3$.
		There are two possibilities for a parabolic subgroup $H$
		such that $H\cap D$ is a single conjugacy class:
		either $H$ is isomorphic to $\Sym(n)$ or to $W_k(\affrt A_{n-1})$.
		We use a representation of $G$ as a matrix group.
		Let
		\begin{equation}
			\begin{gathered}
				g_1 = \begin{pmatrix}
					0 & 1 \\
					1 & 0 \\
				\end{pmatrix}
				\oplus I_{n-1},\quad
				g_2 = \begin{pmatrix}
					1
				\end{pmatrix}\oplus
				\begin{pmatrix}
					0 & 1 \\
					1 & 0 \\
				\end{pmatrix}
				\oplus I_{n-2},\quad
				\dotsc,\quad
				g_{n-1} = I_{n-2}\oplus
				\begin{pmatrix}
					0 & 1 \\
					1 & 0 \\
				\end{pmatrix}
				\oplus\begin{pmatrix}
					1
				\end{pmatrix},\\
				g_{n+1} = \left(
				\begin{array}{cc|c|c}
					0 & 1 & 0 & 0 \\
					1 & 0 & 0 & 0 \\
					\hline
					0 & 0 & I_{r-3} & 0 \\
					\hline
					1 & -1 & 0 & 1 \\
				\end{array}
				\right),\quad
				h = \left(
				\begin{array}{c|c}
					I_{n+1} & 0 \\
					\hline
					1 & 1 \\
				\end{array}
				\right)
				\text{ over }
				\FF_k.
			\end{gathered}
		\end{equation}
		Then $G\cong \la g_1,\dotsc,g_{n+1}\ra/\la h\ra$
		and $D$ is the set of conjugates of $\{g_i\la h\ra\}_{1\leq i\leq n+1}$.
		We also set $\hat G = \la g_1,\dotsc,g_{n+1}\ra$
		and $\hat D$ the set of conjugates of $\{g_i\}_{1\leq i\leq n+1}$.
		Now $H\cong W_k(\affrt A_n)$
		if and only if, up to conjugation,
		$H = \hat H/\la h\ra$ for $\hat H = \la g_1,\dotsc,g_{n-1},g_{n+1}\ra$.
		Then it is clear that in $\hat G$,
		$\hat D = (\hat H\cap \hat D)\cup g_n^{\hat H}$.
		The same property descends to the quotient,
		so that $D = (H\cap D)\cup (g_n\la n\ra)^H$.
		This shows that $\cal G/\cal H$ is connected, so $\cal H\subseteq\cal G$ is very regular.
		The other possibility is that $H = \la g_1,\dotsc, g_n\ra/\la n\ra$.
		In this case, when $k = 2$ we see that $W_2(\affrt A_{n-1})\cong W(\rt D_n)$.
		We can observe that in general in $\hat G$,
		the orbit of $g_{n+1}$ under the action of $\hat H = \la g_1,\dotsc,g_n\ra$
		has size $\frac{1}{2}n(n+1)$ if $k = 2$ and $n(n+1)$ if $k = 3$,
		so that $\hat H$ is transitive on the transpositions in $\hat G$ outside $\hat H$.
		This again holds in the quotient $H$.

		When $G = 3^n:2$,
		there is only one conjugacy class $D$ of involutions.
		Observe that any subset of involutions of $G$
		generates a subgroup $H\cong 3^m:2$ for some $m$.
		Then $H$ is maximal if $m = n-1$.
		In this case, if $t,s$ are two transpositions in $D\smallsetminus H$,
		then $\la t,s\ra\cap H = \{t^s\}$ as $t^s\not\in H$ would contradict maximality,
		so $\cal G/\cal H$ is connected.
		This shows that it is also regular by transitivity.

		That the statement holds for M.~Hall's $G\cong 3^{10}:2$
		was checked in \cite{gap}
		using the presentation
        \begin{equation}
            \begin{aligned}
			G = \la a, b, c, d \mid & a^2 = b^2 = c^2 = d^2 = (ab)^3 = (ac)^3 = (ad)^3 = (bc)^3 = (bd)^3 = (cd)^3 \\
			& \quad = (b^cd)^3 = (a^bc)^3 = (a^bd)^3 = (a^cd)^3
				 = (a^{bd}c)^3 = (a^{cd}b)^3 = (a^{dc}b)^3 = 1\ra,
			\end{aligned}
        \end{equation}
		given in \cite{hall-genth3}, Proposition 2.9.
	\end{proof}
	
    By inductive application of Theorem~\ref{thm vreg}, we have that the hypothesis
    \begin{hypothesis}
        \label{hyp vreg}
        For any parabolic subgroups $K,H$ of $(G,D)$
        such that $K$ is a maximal parabolic subgroup of $H$,
        $K$ is very regular in $H$.
    \end{hypothesis}
    holds for many examples of $(G,D)$.

\section{Idempotents}
  \label{sec idempots}
  
  Let $\FF$ be a field containing $\frac{1}{2}$ and $\al$.
  In this section, we investigate an important class of idempotents,
  coming from identity elements of parabolic subalgebras,
  and show that they are well-behaved.
  
  \begin{lemma}
    \label{lem units}
    If $\cal G$ is a connected Fischer space
    and $\al\neq-\frac{2}{k_\cal G}$
    then $A = M_\al(\cal G)_\FF$ is unital, with
	\begin{equation}
	    \label{eq id}
		\id_\cal G = \frac{1}{1 + \frac{1}{2}\al k_\cal G}\sum_{x\in\cal G}x.
		\qedhere
	\end{equation}
  \end{lemma}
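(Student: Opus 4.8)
The plan is to verify directly that the claimed element $\id_{\cal G}$ acts as identity on the spanning set $\{x : x \in \cal G\}$, which by bilinearity suffices. Write $s = \sum_{x \in \cal G} x$ and let $c = (1 + \tfrac12\al k_{\cal G})^{-1}$, so that $\id_{\cal G} = c\,s$. Fix a point $y \in \cal G$. I would compute $s y = \sum_{x \in \cal G} x y$ by splitting the sum according to the three cases in Definition~\ref{def matsuo}: the single term $x = y$ contributes $y$; the terms with $x \not\sim y$ contribute $0$; and each of the $k_{\cal G}$ terms with $x \sim y$ contributes $\tfrac{\al}{2}(x + y - x\wedge y)$. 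Here I use that $\cal G$ is connected, so every point has exactly $k_{\cal G}$ neighbours.

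The key combinatorial observation is that the map $x \mapsto x \wedge y$ is an involution on the neighbourhood $y^\sim = \{x \in \cal G : x \sim y\}$: if $x \sim y$ then $\{x, y, x \wedge y\}$ is a line, so $x \wedge y \sim y$ and $(x\wedge y) \wedge y = x$. Hence $\sum_{x \sim y} x = \sum_{x \sim y} (x \wedge y)$, and these two terms cancel in the expansion, leaving
\begin{equation}
  s y = y + \frac{\al}{2}\sum_{x \sim y} y = y + \frac{\al}{2} k_{\cal G}\, y = \Bigl(1 + \tfrac12 \al k_{\cal G}\Bigr) y.
\end{equation}
Multiplying by $c$ gives $\id_{\cal G}\, y = y$, as required. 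The hypothesis $\al \neq -\tfrac{2}{k_{\cal G}}$ is exactly what guarantees $1 + \tfrac12 \al k_{\cal G} \neq 0$, so that $c$ is a well-defined element of $\FF$ and the formula makes sense; note $\tfrac{\al}{2}$ and hence $\al k_{\cal G}/2$ lies in $\FF$ by hypothesis on the field.

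There is essentially no obstacle here beyond being careful about the cancellation step: the mild subtlety is checking that $x \mapsto x \wedge y$ is genuinely a bijection of $y^\sim$ onto itself (not merely that $x \wedge y \in y^\sim$), which follows because it is its own inverse, and that no neighbour $x$ of $y$ satisfies $x = x\wedge y$, which is clear since a line has three distinct points. One should also note that the multiplication is commutative, so $y\, s = s\, y$ and $\id_{\cal G}$ is a two-sided identity. Since $\{x : x \in \cal G\}$ is an $R$-basis of $M_\al(\cal G)_R$ and the identity, once it exists, is unique, this completes the proof.
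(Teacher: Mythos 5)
Your proposal is correct and follows essentially the same route as the paper: split the sum over $\{y\}\cup y^\sim\cup y^{\not\sim}$, observe that $x\mapsto x\wedge y$ permutes $y^\sim$ so the $x$ and $x\wedge y$ contributions cancel, and conclude $sy=(1+\tfrac12\al k_{\cal G})y$. The extra care you take in verifying that $x\mapsto x\wedge y$ is a self-inverse bijection of $y^\sim$ is a welcome refinement of the paper's one-line justification, but the argument is the same.
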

	\begin{proof}
		We show that, for $x\in\cal G$,
		\begin{equation}
			x\sum_{y\in\cal G}y = (1 + \frac{1}{2}\al k_\cal G)x.
		\end{equation}
		Observe that $\cal G = \{x\}\cup x^\sim\cup x^{\not\sim}$,
		for $x^\sim = \{y\in\cal H\mid x\sim y\}$
		and $x^{\not\sim} = \{y\in\cal H\mid x\not\sim y\}$;
		also, $\size{x^\sim} = k_\cal G$.
		Then
		\begin{equation}
			x\sum_{y\in\cal G}y
			= xx + x\sum_{y\in x^\sim}y + x\sum_{y\in x^{\not\sim}}y
			= x + \frac{\al}{2}\sum_{y\in x^\sim}(x+y-x\wedge y) + 0
			= (1+\frac{1}{2}\al k_\cal G)x,
		\end{equation}
		where the last equality follows since,
		as $y$ ranges over $x^\sim$, so does $x\wedge y$:
		that is, $\{x\wedge y\mid y\in x^\sim\} = x^\sim$
		and $\sum_{y\in x^\sim}(y - x\wedge y) = \sum_{y\in x^\sim}y - \sum_{y\in x^\sim}x\wedge y = 0$.

		If $1+\frac{1}{2}\al k_\cal G$ is nonzero, that is, $\al\neq-\frac{2}{k_\cal G}$,
		it follows that $\id_\cal G$ of \eqref{eq id} is the identity of $A$.
	\end{proof}

	This result, and its sequels, admits generalisation to nonconnected Fischer spaces $\cal G$.
	If $\cal G = \cal G_1\cup\dotsm\cup\cal G_n$ is a partition
	into pairwise disconnected Fischer spaces such that each $M_\al(\cal G_i)$ is unital,
	then $\id_{\cal G} = \sum_i\id_{\cal G_i}$.
	
	Write $\Spec(M)$ for the eigenvalues of a matrix $M$,
	and by extension $\Spec(\cal G)$ for the eigenvalues
	of the adjancency matrix of a graph $\cal G$.
	Also recall the Minkowski addition, and difference, of sets: $X \pm Y = \{x\pm y\mid x\in X,y\in Y\}$.
	For $x\in\FF$ and $Y\subseteq\FF$,
	we will write $x-Y = \{x\}-Y$.
	Now we can describe the eigenvalues of $\id_\cal H$.
	
	\begin{lemma}
		\label{lem id eigval}
		Suppose that $\cal H\subseteq\cal G$ is very regular and $\al\neq-\frac{2}{k_\cal H}$.
		Then $\id_\cal H$ in $A = M_\al(\cal G)_\FF$ acts diagonalisably on the subspaces spanned by
		\begin{align*}
			& \cal H \text{ with eigenvalue } 1, \text{ and } \\
			& \cal H \cup \cal H^\sim \text{ with further eigenvalues } \frac{\al}{2+\al k_\cal H}(k_\cal H^\cal G - \Spec(\cal G/\cal H)).
		\end{align*}
	\end{lemma}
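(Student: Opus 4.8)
The plan is to compute $\ad(\id_\cal H)$ restricted to the $\FF$-span $W$ of $\cal H\cup\cal H^\sim$, in block form relative to $W = M_\al(\cal H)_\FF\oplus U$ where $U$ is the $\FF$-span of $\cal H^\sim$, and to recognise the $U$-block as a scalar affine image of the adjacency operator of the boundary graph $\cal G/\cal H$.

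First I would write $\id_\cal H = c\,\sigma$ with $\sigma = \sum_{x\in\cal H}x$ and $c = \frac{2}{2+\al k_\cal H}$ (well-defined since $\al\neq-2/k_\cal H$, and $k_\cal H$ exists because $\cal H$ is connected). As $\cal H$ is a subspace, any line meeting $\cal H$ in two points lies in $\cal H$, so the multiplication among points of $\cal H$ agrees in $M_\al(\cal G)_\FF$ and in the subalgebra $M_\al(\cal H)_\FF$, and the computation of Lemma~\ref{lem units} carried out there gives $\id_\cal H\,y = y$ for all $y\in\cal H$; hence $\ad(\id_\cal H)$ is the identity on $M_\al(\cal H)_\FF$, supplying the eigenvalue $1$. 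For $z\in\cal H^\sim$, expanding $\id_\cal H\,z = c\sum_{x\in\cal H}xz$ and dropping the vanishing terms ($x\not\sim z$) and the impossible one ($x=z$, excluded by $z\notin\cal H$) gives, with $N := \{x\in\cal H : x\sim z\}$ (nonempty by definition of $\cal H^\sim$),
\begin{equation*}
  \id_\cal H\,z \;=\; c\tfrac{\al}{2}\Bigl(\tsum_{x\in N}x \;+\; \size N\,z \;-\; \tsum_{x\in N}x\wedge z\Bigr).
\end{equation*}
Since $z\notin\cal H$, the subspace property forces $x\wedge z\notin\cal H$, while $x\wedge z\sim x\in\cal H$ gives $x\wedge z\in\cal H^\sim$; so $\id_\cal H\,z\in W$ and $W$ is $\ad(\id_\cal H)$-invariant.

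The heart of the argument is the combinatorial claim that $x\mapsto x\wedge z$ is a bijection from $N$ onto the neighbourhood of $z$ in $\cal G/\cal H$: for $x\in N$, the point $w := x\wedge z$ lies in $\cal H^\sim$ and satisfies $w\wedge z = x\in\cal H$, so $w$ is a $\cal G/\cal H$-neighbour of $z$; conversely a $\cal G/\cal H$-neighbour $w$ of $z$ has $x := w\wedge z\in\cal H$, whence $x\in N$ and $x\wedge z = w$; injectivity is uniqueness of lines. Since $\cal H\subseteq\cal G$ is very regular, $\cal G/\cal H$ is $k_\cal H^\cal G$-regular, so $\size N = k_\cal H^\cal G$ and $\sum_{x\in N}x\wedge z = A_{\cal G/\cal H}\,z$, the image of the basis vector $z$ under the adjacency operator $A_{\cal G/\cal H}$ of $\cal G/\cal H$ on $U$. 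Hence, modulo $M_\al(\cal H)_\FF$,
\begin{equation*}
  \ad(\id_\cal H)\,z \;\equiv\; c\tfrac{\al}{2}\bigl(k_\cal H^\cal G\,z - A_{\cal G/\cal H}\,z\bigr) \;=\; \tfrac{\al}{2+\al k_\cal H}\bigl(k_\cal H^\cal G I - A_{\cal G/\cal H}\bigr)z,
\end{equation*}
so $\ad(\id_\cal H)$ induces on $W/M_\al(\cal H)_\FF\cong U$ the operator $\frac{\al}{2+\al k_\cal H}(k_\cal H^\cal G I - A_{\cal G/\cal H})$. As $A_{\cal G/\cal H}$ is a real symmetric $0/1$-matrix it is diagonalisable with spectrum $\Spec(\cal G/\cal H)$, and since $\cal G/\cal H$ is connected and $k_\cal H^\cal G$-regular, Theorem~\ref{thm perfrob} places $k_\cal H^\cal G$ at the top of that spectrum with a one-dimensional eigenspace. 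The induced quotient operator is therefore diagonalisable with eigenvalue set $\frac{\al}{2+\al k_\cal H}\bigl(k_\cal H^\cal G - \Spec(\cal G/\cal H)\bigr)$, matching the statement; as $\ad(\id_\cal H)$ is block upper-triangular on $W$ with top block the identity and diagonalisable quotient block, I would finish by lifting each quotient eigenvector of eigenvalue $\mu\neq1$ to a genuine $\mu$-eigenvector in $W$ — the linear system for its $M_\al(\cal H)_\FF$-component has invertible coefficient $1-\mu$ — which with a basis of $M_\al(\cal H)_\FF$ yields an eigenbasis of $W$.

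The main obstacle is the degenerate case where $1$ is itself among the further eigenvalues, i.e.\ $k_\cal H^\cal G - k_\cal H - 2/\al\in\Spec(\cal G/\cal H)$: then the triangular lift does not by itself exclude a Jordan block linking $M_\al(\cal H)_\FF$ to the quotient $1$-eigenspace, and one must show the off-diagonal block annihilates the corresponding quotient eigenvectors, i.e.\ $\sum_{z\in\cal H^\sim,\,z\sim y}u_z = 0$ for all $y\in\cal H$ whenever $A_{\cal G/\cal H}u = (k_\cal H^\cal G - k_\cal H - 2/\al)u$. I would attack this via the associating symmetric form of $M_\al(\cal G)$, with respect to which $\ad(\id_\cal H)$ is self-adjoint, so it is semisimple on the non-radical part of $W$ — or by a direct count exploiting the regularity packaged into very regularity. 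Everything upstream of this is routine bookkeeping once the bijection $x\leftrightarrow x\wedge z$ and the constancy $\size N = k_\cal H^\cal G$ are in place.
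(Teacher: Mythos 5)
Your proof is correct and follows essentially the same route as the paper: the same splitting of $\la\cal H\cup\cal H^\sim\ra_\FF$ into the span of $\cal H$ (on which $\id_\cal H$ acts as the identity) and the span of $\cal H^\sim$, the same computation of $\id_\cal H z$ via the bijection $x\mapsto x\wedge z$ between $z^\sim\cap\cal H$ and the $\cal G/\cal H$-neighbours of $z$, and the same identification of the induced operator on the quotient with $\frac{\al}{2+\al k_\cal H}(k_\cal H^\cal G I - \ad(\cal G/\cal H))$. You are in fact more explicit than the paper at the last step: its ``by comparing dimensions'' is exactly your triangular lift, which produces an eigenbasis only when $1$ does not occur among the further eigenvalues, i.e.\ when $\al\neq 2/(k_\cal H^\cal G-k_\cal H-\lm)$ for $\lm\in\Spec(\cal G/\cal H)$. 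The degenerate case you flag as the main obstacle is not resolved in the paper's proof either; it is sidestepped downstream (Lemmas~\ref{lem units 10eigvect} and~\ref{lem units seress}, Theorem~\ref{thm fazit seress}) by excluding precisely those values of $\al$ or by taking $\al$ indeterminate, so your acknowledgement of that gap reflects a genuine subtlety of the statement rather than a defect of your argument relative to the paper's.
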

	\begin{proof}
	    By Lemma~\ref{lem units}, $\la\cal H\ra_\FF$ is a subspace of the $1$-eigenspace of $\id_\cal H$.
	    
		Take $y\in\cal H^\sim$, where $\cal H^\sim$ is defined in \eqref{eq hsim}. Then $y\not\in\cal H$ and
		\begin{align}
			\label{eq-action-of-id}
			\id_\cal H y & = \frac{1}{1+\frac{1}{2}\al k_\cal H}\sum_{x\in y^\sim\cap\cal H}\frac{\al}{2}(x + y - x\wedge y).
		\intertext{
		If $x\in\cal H$ and $y\in x^\sim\smallsetminus\cal H$
		then $y,x\wedge y\in\cal H^\sim$, so that $\id_\cal H$
		fixes the subspace spanned by $\cal H\cup\cal H^\sim$.
        Furthermore, as $k_\cal G^\cal H = \size{y^\sim\cap\cal H}$,
		}
		    \label{eq idhy}
			\id_\cal H y & =
			\frac{\al k_\cal G^\cal H}{2+\al k_\cal H}y
			+\frac{\al}{2+\al k_\cal H}\sum_{x\in y^\sim\cap\cal H}(x-x\wedge y).
		\end{align}
		Observe that $x\in\cal H$ and $x\wedge y\in\cal H^\sim$ (for, if $x\wedge y\in\cal H$, then as $\cal H$ is a subspace we would have $x\wedge(x\wedge y)=y\in\cal H$).
		Now suppose that $e\in\la\cal H\cup\cal H^\sim\ra_\FF$ is an eigenvector for $\id_\cal H$.
		Write $e_\sim$ for the projection of $e$ to $\la\cal H^\sim\ra_\FF$ and $e_0 = e - e_\sim$.
		Then
		\begin{equation}
			\id_\cal H e = e_0
				+ \id_\cal H e_\sim = \lm e
		\end{equation}
		for some $\lm$,
		and, using \eqref{eq idhy},
		the projection of $\id_\cal H e_\sim$ to $\la\cal H^\sim\ra_\FF$ is
		\begin{equation}
			\frac{\al}{2+\al k_\cal H}(k_\cal G^\cal HI_{\size{\cal H}}
				+ \ad(\cal G/\cal H))e_\sim = \lm e_\sim,
		\end{equation}
		where $\ad(\cal G/\cal H)y = \sum_{x\in y^\sim\cap\cal H}x\wedge y$ is extended $\FF$-linearly to $\la\cal H^\sim\ra_\FF$.
        Therefore if $e_\sim\neq0$,
        then $\lm$ is an eigenvalue of
		\begin{equation}
			\frac{\al}{2+\al k_\cal H}(k_\cal G^\cal H I_{\size{\cal H}}-\ad(\cal G/\cal H)).
		\end{equation}
		Therefore $\lm$ is in $\frac{\al}{2+\al k_\cal H}(k_\cal G^\cal H-\Spec(\cal G/\cal H))$.
		By comparing dimensions, the eigenspaces of $\id_\cal H$ span $\la\cal H\cup\cal H^\sim\ra_\FF$,
		so $\id_\cal H$ is diagonalisable.
	\end{proof}

    This enables us to show that (the adjoint operator of) $\id_\cal H$ is diagonalisable:

	\begin{lemma}
		\label{lem id diag}
		Suppose that $\cal H\subseteq\cal G$
		and $\cal H$ is very regular in any parabolic subspace $\cal G'\subseteq\cal G$
		in which $\cal H$ is maximal.
		If $\al\neq-\frac{2}{k_\cal H}$,
		then $\id_\cal H$ is diagonalisable in $M_\al(\cal G)$.
	\end{lemma}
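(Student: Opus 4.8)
The plan is to reduce the diagonalisability of $\id_\cal H$ in $M_\al(\cal G)$ to a statement about how $\id_\cal H$ acts on the ascending chain of parabolic subspaces linking $\cal H$ to $\cal G$, using Lemma~\ref{lem id eigval} as the base case of an induction. Concretely, choose a chain $\cal H = \cal G_0 \subsetneq \cal G_1 \subsetneq \dotsm \subsetneq \cal G_m = \cal G$ of parabolic subspaces in which each $\cal G_{i-1}$ is \emph{maximal} in $\cal G_i$; such a chain exists because parabolic subspaces satisfy the ascending/maximal-chain condition, and by hypothesis $\cal H$ is then very regular in each $\cal G_i$ (as $\cal H = \cal G_0$ is maximal in $\cal G_1$, so very regular there; and one refines the hypothesis to every link if needed). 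I would first record that $M_\al(\cal G_i)_\FF$ sits inside $M_\al(\cal G)_\FF$ as a subalgebra on which $\id_\cal H$ acts, and that the filtration by the $\la\cal G_i\ra_\FF$ is $\ad(\id_\cal H)$-invariant.

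The core inductive step is the following claim: if $\id_\cal H$ acts diagonalisably on $\la\cal G_{i-1}\ra_\FF$, then it acts diagonalisably on $\la\cal G_i\ra_\FF$. Here $\cal G_{i-1}$ is maximal in $\cal G_i$, and since $\cal H \subseteq \cal G_{i-1}$ with $\cal H$ very regular in $\cal G_i$, I can apply the analysis behind Lemma~\ref{lem id eigval} but with $\cal G_{i-1}$ in place of $\cal H$ and $\cal G_i$ in place of $\cal G$: the point is that $\cal G_i = \cal G_{i-1} \cup \cal G_{i-1}^{\sim} \cup \cal G_{i-1}^{\not\sim}$ as sets, and on $\la\cal G_{i-1}^{\not\sim}\ra_\FF$ the element $\id_\cal H$ acts with $0$ (points of $\cal G_{i-1}^{\not\sim}$ that fail to be $\sim$ to anything in $\cal H$) or is handled within an already-diagonalised block. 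The computation of Lemma~\ref{lem id eigval}, run relative to the very regular embedding $\cal H \subseteq \cal G_i$, shows that $\id_\cal H$ is diagonalisable on $\la\cal H \cup \cal H^\sim\ra_\FF$ inside $M_\al(\cal G_i)_\FF$; combining this across the chain and using that the eigenspace decompositions on successive pieces are compatible (the new eigenvectors introduced at stage $i$ project nontrivially into the ``boundary'' layer $\cal G_{i-1}^\sim$, where a Perron--Frobenius-type argument via Theorem~\ref{thm perfrob} controls the spectrum of the boundary graph), one concludes diagonalisability on all of $\la\cal G\ra_\FF = M_\al(\cal G)_\FF$.

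The main obstacle I anticipate is the bookkeeping needed to glue the eigenspace decompositions along the chain: a priori, diagonalisability of a single operator restricted to each subspace of a flag does not give diagonalisability on the whole space unless the flag is suitably compatible with the eigendecomposition. The substantive point to nail down is therefore that, at each stage, the ``old'' space $\la\cal G_{i-1}\ra_\FF$ is a sum of $\id_\cal H$-eigenspaces \emph{within} $\la\cal G_i\ra_\FF$ (not merely that $\id_\cal H$ is separately diagonalisable on a complement), which is exactly what the explicit formula \eqref{eq idhy} for $\id_\cal H y$ with $y$ in the boundary layer delivers: the matrix of $\id_\cal H$ on $\la\cal G_i\ra_\FF$ is block upper-triangular with respect to the splitting $\la\cal G_{i-1}\ra_\FF \oplus \la\cal G_{i-1}^\sim\ra_\FF$, the lower-right block is $\frac{\al}{2+\al k_\cal H}(k^{\cal H}_{\cal G_i} I - \ad(\cal G_i/\cal H))$ which is diagonalisable (symmetric, or handled by the very-regular hypothesis plus Theorem~\ref{thm perfrob}), and the two diagonal blocks have disjoint spectra for generic $\al$—so standard linear algebra (solving the Sylvester-type equation for the off-diagonal block) upgrades blockwise diagonalisability to genuine diagonalisability. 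Once this ``disjoint spectra'' transversality is checked (possibly requiring a mild hypothesis on $\al$, or an argument that the exceptional values are finite), the induction closes and the lemma follows.
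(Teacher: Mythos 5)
Your chain-induction strategy diverges from the paper's argument and, as you yourself half-acknowledge, it does not close under the stated hypotheses. The lemma only assumes that $\cal H$ is very regular in those parabolic subspaces $\cal G'$ in which $\cal H$ is \emph{maximal}; in your chain $\cal H=\cal G_0\subsetneq\cal G_1\subsetneq\dotsm\subsetneq\cal G_m=\cal G$ the subspace $\cal H$ is maximal only in $\cal G_1$, so for $i\geq2$ you have no right to the regularity of the boundary graph $\cal G_i/\cal H$ that your lower-right block $\frac{\al}{2+\al k_\cal H}(k^{\cal H}_{\cal G_i}I-\ad(\cal G_i/\cal H))$ presupposes (the count $\size{y^\sim\cap\cal H}$ need not be constant over $y\in\cal G_i\smallsetminus\cal G_{i-1}$). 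Writing ``one refines the hypothesis to every link if needed'' is strengthening the lemma's hypothesis, not proving the lemma. The second gap is the upgrade from block upper-triangularity to diagonalisability: your Sylvester-equation argument needs the two diagonal blocks to have disjoint spectra, and you concede this may require ``a mild hypothesis on $\al$'' --- but the lemma assumes only $\al\neq-\frac{2}{k_\cal H}$, so any genericity assumption on $\al$ changes the statement.

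The paper avoids both problems by not inducting along a chain at all. For each $x\in\cal G\smallsetminus\cal H$ it sets $\cal G'=\la x,\cal H\ra$: if $x\notin\cal H^\sim$ then $\id_\cal H x=0$ outright, and otherwise $\cal H$ is \emph{maximal} in $\cal G'$ (it is generated by $\cal H$ and one extra point), so the hypothesis applies directly and Lemma~\ref{lem id eigval} makes $\id_\cal H$ diagonalisable on $\la\cal G'\ra_\FF$. Since these ``petals'' $\cal G'_1,\dotsc,\cal G'_r$ together with $\cal H^{\not\sim}$ cover $\cal G$ (and pairwise meet only in $\cal H$, by maximality), the whole of $M_\al(\cal G)$ is spanned by eigenvectors of $\id_\cal H$, which is all that diagonalisability requires --- no compatibility of eigendecompositions across pieces and no spectral disjointness is ever needed, because a sum (not necessarily direct) of subspaces each spanned by eigenvectors is itself spanned by eigenvectors. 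I recommend you replace the chain induction with this one-step covering argument.
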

	\begin{proof}
		By Lemma~\ref{lem units},
		the subalgebra of $M_\al(\cal G)$ spanned by $\cal H$
		has an identity $\id_\cal H$.

		Let $x\in\cal G\smallsetminus\cal H$ be arbitrary
		and set $\cal G' = \la x,\cal H\ra$.
		If $x\not\in\cal H^\sim$,
		then $\id_\cal H x = 0$;
		otherwise, $\id_\cal H$ acts on $\cal G'$ diagonalisably
		by Lemma~\ref{lem id eigval}.
		Now $\cal G\smallsetminus\cal H$
		can be partitioned in $\cal G'_1,\cal G'_2,\dotsc,\cal G'_r$ and $\cal H^{\not\sim}$
		where each $\cal G'_i$ is a subgraph of $\cal G$
		in which $\cal H$ is maximal.
		That $\cal G'_i\cap\cal G'_j = \cal H$ if $i\neq j$
		follows from the fact that,
		if $y\in(\cal G'_i\cap\cal G'_j)\smallsetminus\cal H$
		then $\cal G'_i = \la\cal H,y\ra = \cal G'_j$ by maximality,
		so the $\cal G'_i$ have pairwise trivial intersection.
		Thus $\id_\cal H$ acts diagonalisably on a basis of $M_\al(\cal G)$.
	\end{proof}

    We can also classify the $1$- and $0$-eigenspaces of any $\id_\cal H$.

	\begin{lemma}
		\label{lem units 10eigvect}
		Suppose that $\cal H\subseteq\cal G$ is very regular.
		If $\al$ is an indeterminate over $\FF$
		and $A = M_\al(\cal G)_{\FF(\al)}$,
		then the $1$-eigenspace of $\id_\cal H$ is $\la\cal H\ra_{\FF(\al)}$,
		the $0$-eigenspace is $1$-dimensional,
		and these are the only eigenvalues of $\id_\cal H$ contained in $\FF\subseteq\FF(\al)$.
		If $\al\in\FF$, the $1,0$-eigenspaces of $\id_\cal H$ in $M_\al(\cal G)_\FF$
		are the same if $\al\neq\frac{2}{k_\cal G^\cal H-k_\cal H-\lm}$ for any $\lm\in\Spec(\ad(\cal G/\cal H))$.
	\end{lemma}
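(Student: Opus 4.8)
The plan is to read off the complete spectrum of $\id_\cal H$ on $A$ from Lemma~\ref{lem id eigval}, and then to decide which of its eigenvalues can lie in the subfield $\FF$. The first observation is that, since $\cal H$ is a maximal subspace of the connected space $\cal G$, every point $x\in\cal G\smallsetminus\cal H$ lies in $\cal H^\sim$: were $x$ collinear with no point of $\cal H$, then $\la\cal H\cup\{x\}\ra$ would be exactly $\cal H\cup\{x\}$, a subspace that is proper (as $\cal G$ is connected) and strictly contains $\cal H$, contradicting maximality. Hence $\cal G=\cal H\cup\cal H^\sim$. Applying Lemma~\ref{lem units} and Lemma~\ref{lem id eigval} (with ground field $\FF(\al)$), the adjoint of $\id_\cal H$ is then diagonalisable on all of $A$, with eigenvalue $1$ on $\la\cal H\ra$ (which is $\size{\cal H}$-dimensional) and with the remaining eigenvalues being $\mu_\lm:=\frac{\al}{2+\al k_\cal H}\bigl(k_\cal G^\cal H-\lm\bigr)$, $\lm\in\Spec(\ad(\cal G/\cal H))$, each occurring with the multiplicity of the corresponding $\lm$.

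Next I would exploit that $\ad(\cal G/\cal H)$ is an integer matrix, so every such $\lm$ is algebraic over $\FF$, whereas $\al$ is transcendental over $\FF$. Writing $\mu_\lm=c$ with $c\in\FF$ and clearing denominators gives $\al\bigl(k_\cal G^\cal H-k_\cal H-\lm-ck_\cal H\bigr)=2c$; if $c\neq0$ then $2c\neq0$ (as $\ch\FF\neq2$), so $k_\cal G^\cal H-k_\cal H-\lm-ck_\cal H\neq0$ and $\al=2c/(k_\cal G^\cal H-k_\cal H-\lm-ck_\cal H)$ would be algebraic over $\FF$, a contradiction. With $c=1$ this shows no $\mu_\lm$ equals $1$, so the $1$-eigenspace of $\id_\cal H$ is exactly $\la\cal H\ra_{\FF(\al)}$; and the only $\mu_\lm$ lying in $\FF$ is $0$, which (as $\tfrac{\al}{2+\al k_\cal H}\neq0$ in $\FF(\al)$) happens precisely for $\lm=k_\cal G^\cal H$. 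Since $\cal G/\cal H$ is connected and $k_\cal G^\cal H$-regular, Theorem~\ref{thm perfrob} makes $k_\cal G^\cal H$ a simple eigenvalue of $\ad(\cal G/\cal H)$, so the $0$-eigenspace of $\id_\cal H$ is one-dimensional; thus $1,0$ are the only eigenvalues of $\id_\cal H$ lying in $\FF$.

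For the specialised statement, with $\al$ now a fixed element of $\FF$, I would run the same computation over $\FF$. The $0$-eigenspace is unaffected: as $\al\neq0$, $\mu_\lm=0$ still forces $\lm=k_\cal G^\cal H$, still simple by Theorem~\ref{thm perfrob}, so it remains one-dimensional. But $\mu_\lm=1$ is now genuinely possible: it holds precisely when $\ad(\cal G/\cal H)v=\bigl(k_\cal G^\cal H-k_\cal H-\tfrac{2}{\al}\bigr)v$ for some $v\neq0$, \ie when $\tfrac{2}{\al}=k_\cal G^\cal H-k_\cal H-\lm$, equivalently $\al=\tfrac{2}{k_\cal G^\cal H-k_\cal H-\lm}$, for some $\lm\in\Spec(\ad(\cal G/\cal H))$. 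Excluding these finitely many values of $\al$ keeps the $1$-eigenspace equal to $\la\cal H\ra_\FF$, so over $\FF$ the $1$- and $0$-eigenspaces of $\id_\cal H$ are as they are over $\FF(\al)$. (The choice $\lm=k_\cal G^\cal H$ here gives back the value $\al=-2/k_\cal H$ at which $\id_\cal H$ is not even defined, so that degenerate case is automatically among the ones we exclude.)

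The step I expect to be the real obstacle is the simplicity of the eigenvalue $k_\cal G^\cal H$ of $\ad(\cal G/\cal H)$ --- equivalently, the one-dimensionality of the $0$-eigenspace. Theorem~\ref{thm perfrob} is a statement over $\CC$, so the argument above is clean in characteristic $0$ (or for $\FF\subseteq\CC$); in positive characteristic one must instead show directly that the all-ones vector spans $\ker\!\bigl(\ad(\cal G/\cal H)-k_\cal G^\cal H I\bigr)$ over $\FF$, and this can fail for some boundary graphs $\cal G/\cal H$ in small characteristic, so the clean statement really wants $\ch\FF=0$. The remaining ingredients --- the dimension count of the preceding paragraph and the transcendence bookkeeping (eigenvalues of an integer matrix lie in $\overline\FF$, and $\overline\FF\cap\FF(\al)=\FF$) --- are routine.
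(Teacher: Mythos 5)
Your proposal is correct and follows essentially the same route as the paper's proof: read the spectrum off Lemma~\ref{lem id eigval} (your observation that maximality forces $\cal G=\cal H\cup\cal H^\sim$ makes explicit why that lemma gives the full spectrum), use transcendence of $\al$ to show $1,0$ are the only eigenvalues in $\FF$, invoke Theorem~\ref{thm perfrob} for the one-dimensionality of the $0$-eigenspace, and solve $\mu_\lm=1$ to get the excluded values of $\al$. The only blemish is the cleared-denominator identity, which should read $\al\bigl(k_\cal G^\cal H-\lm-ck_\cal H\bigr)=2c$ rather than carrying an extra $-k_\cal H$; your later specialisation $\al=2/(k_\cal G^\cal H-k_\cal H-\lm)$ at $c=1$ is the correct one and agrees with the paper, and the transcendence contradiction is unaffected.
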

	\begin{proof}
		The eigenvalues of $\id_\cal H$ on $\cal G$ are classified
		by Lemma~\ref{lem id eigval},
		showing that if $\al$ is an indeterminate,
		then $1$ and $0$ are the only eigenvalues in $\FF\subseteq\FF(\al)$.
        The eigenvalues of $\id_\cal H$ are $1$ and $\frac{\al}{2+\al k_\cal H}(k_\cal G^\cal H-\lm)$,
		for $\lm\in\Spec(\cal G/\cal H)$.
		Evidently $\cal H\subseteq A^{\id_\cal H}_1$.
		By Theorem~\ref{thm perfrob},
		the $k^\cal G_\cal H$-eigenspace of $\ad(\cal G/\cal H)$ is $1$-dimensional,
		so when $\al\neq0$ the $0$-eigenspace of $\id_\cal H$ is also $1$-dimensional.
		It only remains to consider other $1$-eigenvectors.
		The only solution to the equation, if $\lm\neq k^\cal H_\cal G$,
		\begin{equation}
			\frac{\al}{2+\al k_\cal H}(k_\cal G^\cal H-\lm) = 1
		\end{equation}
		is $\al = 2/(k_\cal G^\cal H - k_\cal H - \lm)$.
	\end{proof}

	We say that an element $x\in A$ is {\em Seress}
	if it acts diagonalisably
	and the fusion rules $\Phi$ satisfied by its eigenspaces are Seress as in Definition~\ref{def seress}.
	In particular, this applies to $\id_\cal H$ in certain cases:

	\begin{lemma}
		\label{lem units seress}
		If $\cal H\subseteq\cal G$ are very regular Fischer spaces
		and $\al\neq-\frac{2}{k_\cal H},\frac{2}{k_\cal G^\cal H-k_\cal H-\lm}$ for any $\lm\in\Spec(\ad(\cal G/\cal H))$,
		then $\id_\cal H$ is Seress in $M_\al(\cal G)_\FF$.
		Furthermore $\id_\cal H$ is Seress in $M_\al(\cal G)$
		if $\cal H\subseteq\cal G'$ is very regular
		whenever $\cal H\subseteq\cal G'$ is maximal
		and $\cal G'\subseteq\cal G$,
		and $\al\neq\frac{2}{k_\cal G^\cal H-k_\cal H-\lm}$ for $\lm\in\Spec(\ad(\cal G'/\cal H))$.
	\end{lemma}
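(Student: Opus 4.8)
The plan is to reduce, via Lemma~\ref{lem weak ser}, to the assertion that $\id_\cal H$ associates with its $1,0$-eigenspace, and to exploit that every point of $\cal G$ is already a Seress $\Phi(\al)$-axis. Diagonalisability of $\id_\cal H$ follows from Lemma~\ref{lem id eigval} (very regular case) or Lemma~\ref{lem id diag} (second case), whose hypotheses are implied by ours; so by Lemma~\ref{lem weak ser} it suffices to show $\id_\cal H(xz)=(\id_\cal H x)z$ for all $x\in A$ and all $z$ ranging over a spanning set of $A^{\id_\cal H}_{\{1,0\}}$. This is where the restriction on $\al$ enters: by Lemma~\ref{lem units 10eigvect}, applied to $\cal H\subseteq\cal G$ in the first case and to each $\cal H\subseteq\cal G'$ with $\cal H$ maximal in $\cal G'\subseteq\cal G$ in the second, the excluded values of $\al$ are exactly those ruling out any spurious $1$- or $0$-eigenvector, so that $A^{\id_\cal H}_1=\la\cal H\ra_\FF$, and $A^{\id_\cal H}_{\{1,0\}}=\la\cal H\ra_\FF+\FF\sum_{x\in\cal G}x$ in the very regular case, while in the second case, with $\cal G\smallsetminus\cal H=\cal G'_1\cup\dotsm\cup\cal G'_r\cup\cal H^{\not\sim}$ the partition from the proof of Lemma~\ref{lem id diag}, $A^{\id_\cal H}_{\{1,0\}}$ is spanned by $\cal H$, by $\cal H^{\not\sim}$, and by the elements $\id_{\cal G'_i}$.

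The core step is that $\id_\cal H$ associates with $\la\cal H\ra_\FF$. For this I would use Lemma~\ref{lem matsuo axes}: each point $h\in\cal H$ is a $\Phi(\al)$-axis, $\Phi(\al)$ is Seress, and $\id_\cal H\in A^h_{\{1,0\}}$ because $h\,\id_\cal H=h$ forces $\id_\cal H-h\in A^h_0$. Lemma~\ref{lem weak ser} applied to $h$ then gives $h(x\,\id_\cal H)=(hx)\id_\cal H$ for every $x$, \ie $\id_\cal H(hx)=h(\id_\cal H x)$; $\FF$-linearity in $h$ is exactly the statement that $\id_\cal H$ associates with $\la\cal H\ra_\FF$. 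The same device disposes of the remaining easy generators. If $w\in\cal H^{\not\sim}$ then $w\,\id_\cal H=0$, so $\id_\cal H\in A^w_0$ and $w$ associates with $\id_\cal H$. And every point $p\in\cal G$ satisfies $p\sum_{x\in\cal G}x=(1+\frac{1}{2}\al k_\cal G)p$, whence $\sum_{x\in\cal G}x\in A^p_{\{1,0\}}$ and each such $p$ associates with it; summing the identities $h\bigl(z\sum_{x\in\cal G}x\bigr)=(hz)\sum_{x\in\cal G}x$ over $h\in\cal H$ and dividing by $1+\frac{1}{2}\al k_\cal H$ shows $\id_\cal H$ associates with $\sum_{x\in\cal G}x$. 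With the core step, Lemma~\ref{lem weak ser} now settles the very regular case.

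For the ``furthermore'' statement the only generators left are the $\id_{\cal G'_i}$. Here I would first apply the very regular case already proved, to $\cal H\subseteq\cal G'_i$: it shows $\id_\cal H$ is Seress in $M_\al(\cal G'_i)$, hence associates inside $\la\cal G'_i\ra_\FF$ with $\id_{\cal G'_i}\in A^{\id_\cal H}_{\{1,0\}}\cap\la\cal G'_i\ra_\FF$, so that $\id_\cal H(x\,\id_{\cal G'_i})=(\id_\cal H x)\id_{\cal G'_i}$ whenever $x\in\la\cal G'_i\ra_\FF$. It then remains to verify this identity for $x$ a point of $\cal G\smallsetminus\cal G'_i$, \ie that $[\ad(\id_\cal H),\ad(\id_{\cal G'_i})]$ annihilates $\la\cal G\smallsetminus\cal G'_i\ra_\FF$; writing $\id_{\cal G'_i}$ as an average of points, and using that the $g\in\cal H$ terms already cancel, this reduces to $\sum_{g\in\cal G'_i\smallsetminus\cal H}\bigl(\id_\cal H(gx)-g(\id_\cal H x)\bigr)=0$, and expanding $\id_\cal H$ along each axis $g$ rewrites each summand in terms of the $\al$-eigencomponent of $\id_\cal H$ at $g$ together with the action of $g$ on $x$. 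Proving that these contributions cancel is the step I expect to be the main obstacle: it is where the Fischer-space geometry of the embedding $\cal G'_i\subseteq\cal G$, and the very-regularity of the intermediate subspaces, must be used in earnest. Granting it, $\id_\cal H$ associates with all of $A^{\id_\cal H}_{\{1,0\}}$, and Lemma~\ref{lem weak ser} completes the proof.
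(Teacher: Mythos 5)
Your treatment of the first case (very regular $\cal H\subseteq\cal G$) is correct and takes a genuinely different route from the paper's. The paper proves $1\star\phi\subseteq\{\phi\}$ by an explicit eigenvector computation: it expands $\id_\cal H y$ for a $\phi$-eigenvector $y$ using the Miyamoto involutions $\tau(h)$, deduces $\sum_{h\in\cal H}\lm_h h=0$ from the transversality of eigenspaces, and concludes $hy=\frac{\al}{2}(y-y^{\tau(h)})\in A^{\id_\cal H}_\phi$; the rule $0\star\phi\subseteq\{\phi\}$ then follows by observing that $0$-eigenvectors of $\id_\cal H$ are $0$-eigenvectors of every point $h$ and invoking the Seress property of the points. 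You bypass the eigenvector computation entirely: each point $h\in\cal H$ is a Seress $\Phi(\al)$-axis by Lemma~\ref{lem matsuo axes}, both $\id_\cal H$ and $\sum_{x\in\cal G}x$ lie in $A^h_{\{1,0\}}$, so Lemma~\ref{lem weak ser} applied to $h$ gives the relevant commutators of adjoint maps, and summing over $h\in\cal H$ together with the identification $A^{\id_\cal H}_{\{1,0\}}=\la\cal H\ra_\FF+\FF\sum_{x\in\cal G}x$ (which is what the exclusions on $\al$ and Lemma~\ref{lem units 10eigvect} buy you) finishes via the converse direction of Lemma~\ref{lem weak ser}. This is cleaner than the paper's argument and, importantly, the identity $(hx)z=h(xz)$ it produces holds for \emph{all} $x\in A$.

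The gap is in the ``furthermore'' case, and you flag it yourself: you only obtain $\id_\cal H(x\,\id_{\cal G'_i})=(\id_\cal H x)\id_{\cal G'_i}$ for $x\in\la\cal G'_i\ra_\FF$ and leave $x\notin\la\cal G'_i\ra_\FF$ as ``the main obstacle'', so as written the second statement is not proved. But the obstacle is illusory, and your own device from the first case removes it without any analysis of the geometry of $\cal G'_i\subseteq\cal G$: for every point $h\in\cal H\subseteq\cal G'_i$ one has $h\,\id_{\cal G'_i}=h$, hence $\id_{\cal G'_i}-h\in A^h_0$ and $\id_{\cal G'_i}\in A^h_{\{1,0\}}$, so Lemma~\ref{lem weak ser} applied to the Seress axis $h$ already yields $(hx)\id_{\cal G'_i}=h(x\,\id_{\cal G'_i})$ for \emph{all} $x\in A$, not merely for $x$ in the subalgebra; summing over $h\in\cal H$ and dividing by $1+\frac{1}{2}\al k_\cal H$ gives the global associativity of $\id_\cal H$ with $\id_{\cal G'_i}$. (This is, in different clothing, exactly how the paper handles the general case: the $0$-eigenvectors of $\id_\cal H$ are $0$-eigenvectors of every $h\in\cal H$, and the points are Seress. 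If $\al=-\frac{2}{k_{\cal G'_i}}$ so that $\id_{\cal G'_i}$ is undefined, use $\sum_{g\in\cal G'_i}g$ instead, which then lies in $A^h_0$ for every $h\in\cal G'_i$.) With that replacement there is no need to restrict to $M_\al(\cal G'_i)$ at any stage and no cancellation over $g\in\cal G'_i\smallsetminus\cal H$ to verify, and your proof closes.
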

	\begin{proof}
		Lemma~\ref{lem id eigval} showed that $\id_\cal H$ acts diagonalisably.
		We use the classification of $1$- and $0$-eigenvectors of Lemma~\ref{lem units 10eigvect}
		to prove that $1\star\phi\subseteq\{\phi\}\supseteq0\star\phi$
		for all eigenvalues $\phi$ of $\id_\cal H$ in $A = M_\al(\cal G)$
		(which in particular implies $1\star0 = \emptyset$).
		We first take the case when $\cal H\subseteq\cal G$ is very regular.

		Since under our hypotheses the $1$-eigenspace of $\id_\cal H$ is spanned by $\cal H$,
		which is closed under multiplication,
		we already have that $1\star1=\{1\}$.

		We will use four facts for the sequel.
		Firstly, observe that
    	\begin{equation}
    		\label{eq miyamatsuo}
    		y^{\tau(x)} = \begin{cases}
    			x\wedge y & \text{ if } x\sim y, \\
    			y & \text{ otherwise}.
    		\end{cases}
    	\end{equation}
		Secondly, for any $h\in\cal H,a\in A$,
		by application of \eqref{eq miyamatsuo},
		\begin{equation}
			\label{eq genericmult}
			ha = \frac{\al}{2}(\lm_h h + a - a^{\tau(h)})
			\text{ for some }\lm_h\in\FF.
		\end{equation}
		Thirdly, if $t\in\Aut(A)\subseteq\End(A)$ fixes $a\in A$,
		then $t$ centralises $\ad(a)\in\End(A)$ and the eigenspaces $A^a_\phi$ of $a$.
		Fourthly, if $h\in\cal H$ then, as $\cal H$ is closed under $\wedge$,
		$\tau(h)$ permutes the points of $\cal H$
		and therefore fixes $\id_\cal H$.
		
		To show that $1\star\phi=\{\phi\}$ for $\phi\neq1$,
		suppose that $h\in\cal H$ and $y$ is a $\phi$-eigenvector of $\id_\cal H$ in $A$.
		Set $y = y_0 + y_\sim$,
		for $y_\sim\in\la\cal H^\sim\ra_\FF$ the projection of $y$ onto the subspace spanned by $\cal H^\sim$
		and $y_0 = y - y_\sim$.
		Now as $y$ is a $\phi$-eigenvector for $\id_\cal H$,
		$\id_\cal H y = \phi y$ is again a $\phi$-eigenvector.
		On the other hand, using Lemma~\ref{lem units} and \eqref{eq genericmult},
		\begin{equation}
			\phi y = \id_\cal H y = \frac{1}{1+\frac{1}{2}\al k_\cal H}\sum_{h\in\cal H}hy
				= \frac{\al}{2+\al k_\cal H}\sum_{h\in\cal H}(\lm_h h + y - y^{\tau(h)}).
		\end{equation}
		Noting that $y^{\tau(h)}\in(A^{\id_\cal H}_\phi)^{\tau(h)} = A^{\id_\cal H}_\phi$,
		by rearranging terms we have an expression for $\sum_{h\in\cal H}\lm_h h$
		in terms of $\phi$-eigenvectors.
		On the other hand, any $h\in\cal H$ is a $1$-eigenvector
		and $A^{\id_\cal H}_1\cap A^{\id_\cal H}_\phi = 0$,
		so that $\sum_{h\in\cal H}\lm_h h = 0$.
		As the points $h\in\cal H$ are linearly independent,
		this means $\lm_h = 0$ for all $h\in\cal H$.
		Therefore $hy = \frac{\al}{2}(y - y^{\tau(h)})\in A^{\id_\cal H}_\phi$,
		so $1\star\phi=\{\phi\}$.

		To show that $1\star0=\emptyset$,
		observe that the $0$-eigenspace of $\id_\cal H$ is $1$-dimensional by Lemma~\ref{lem units 10eigvect},
		and fixed by any automorphism $t$ fixing $\id_\cal H$.
		In particular, $\tau(h)$ fixes $y\in A^{\id_\cal H}_0$,
		so by the previous paragraph, $hy = \frac{\al}{2}(y - y) = 0$.

		Therefore a $0$-eigenvector $z$ of $\id_\cal H$ in $M_\al(\cal G)$
		is also a $0$-eigenvector of any $h\in\cal H$.
		By Lemma~\ref{lem weak ser},
		for any $x\in A$ we have $h(xz) = (hx)z$.
		As $\id_\cal H$ is a linear combination of $h\in\cal H$,
		we conclude $\id_\cal H(xz) = (\id_\cal H x)z$.
		Thus $\id_\cal H$ and $z$ associate,
		and using the other direction of Lemma~\ref{lem weak ser}
		this implies that $0\star\phi=\{\phi\}$ for all $\phi\neq1$.

		We now tackle the general case of connected $\cal H$ in some $\cal G$
		such that $\cal H\subseteq\cal G'$ is very regular in every $\cal G'\subseteq\cal G$
		for which $\cal H\subseteq\cal G'$ is maximal.
		The $1$-eigenspace of $\id_\cal H$ in $M_\al(\cal G)$
		is still spanned by $\cal H$ and, by the same argument
		as that in the proof of Lemma~\ref{lem id diag},
		any $\phi$-eigenvector
		can be decomposed into a sum of $\phi$-eigenvectors
		lying in $M_\al(\cal G')$ for $\cal H\subseteq\cal G'$ very regular,
		unless $\phi=0$, in which case
		the $0$-eigenspace also includes $\cal H^{\not\sim}$.
		Therefore the fusion rules $1\star\phi=\{\phi\}$ for $\phi\neq0$ are satisfied.

		Suppose that $z\in\cal H^{\not\sim}$;
		then for $h\in\cal H$, $h\not\sim z$ so $hz = 0$.
		This shows that $1\star0=\emptyset$ in $M_\al(\cal G)$.

		To show that $0\star\phi=\{\phi\}$ in $M_\al(\cal G)$,
		we repeat our observation that the $0$-eigenvectors of $\id_\cal H$
		are $0$-eigenvectors of $h\in\cal H$, which are Seress,
		so that by linearity $\id_\cal H$ associates with its $0$-eigenspace
		and, using Lemma~\ref{lem weak ser}, therefore $0\star\phi=\{\phi\}$ for all $\phi\neq1$.
	\end{proof}

  \begin{lemma}
    \label{lem coset ax}
    Suppose that $e,f$ are idempotents
    and $f\in A^e_1$. Then
    \begin{enumerate}
      \itemsep0pt
      \item $e - f$ is an idempotent;
      \item if further $f$ is Seress and $e,f$ are diagonalisable,
        then $e - f$ is diagonalisable;
      \item if further $e$ is Seress and $A^{e-f}_{\{1,0\}}\subseteq A^e_{\{1,0\}}\cap A^f_{\{1,0\}}$,
        then $e - f$ is Seress.
    \end{enumerate}
	\end{lemma}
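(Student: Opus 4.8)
The plan is to prove the three claims in order, since each builds on the previous. For (i), the statement that $e-f$ is an idempotent is a short direct computation: since $f \in A^e_1$ we have $ef = f$, and by commutativity $fe = f$ as well; then $(e-f)^2 = e^2 - ef - fe + f^2 = e - f - f + f = e - f$. So part (i) is essentially immediate from $f$ lying in the $1$-eigenspace of $e$ together with idempotency of $e$ and $f$.

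For (ii), the idea is that $\ad(e)$ and $\ad(f)$ commute when $f \in A^e_1$, so that $\ad(e-f) = \ad(e) - \ad(f)$ is a difference of commuting operators and hence simultaneously diagonalisable, provided each is diagonalisable — which is our hypothesis. The point where the \emph{Seress} hypothesis on $f$ enters is precisely in showing $[\ad(e), \ad(f)] = 0$: I would argue that $\ad(f)$ preserves each eigenspace $A^e_\phi$. Writing an arbitrary vector as a sum of $f$-eigenvectors, and using that $f$ is Seress, I expect to reduce to checking that $A^e_\phi$ decomposes compatibly with the $f$-eigenspace decomposition. Actually the cleaner route: since $f$ is Seress and $f \in A^e_1$, one shows $e$ lies in (is fixed under the relevant structure of) $A^f_{\{1,0\}}$, and then Lemma~\ref{lem weak ser} gives that $f$ associates with $e$ through products, yielding $\ad(e)\ad(f) = \ad(f)\ad(e)$ on all of $A$. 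Once commutativity is established, simultaneous diagonalisability of $\ad(e)$ and $\ad(f)$ gives a common eigenbasis, and on each common eigenvector $v$ with $ev = \phi v$, $fv = \psi v$ we get $(e-f)v = (\phi - \psi)v$, so $e-f$ is diagonalisable.

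For (iii), with $e-f$ now known to be a diagonalisable idempotent, one must check the Seress fusion rules: $1\star\chi \subseteq \{\chi\}$ and $0\star\chi \subseteq \{\chi\}$ for the fusion rules of $e-f$. By Lemma~\ref{lem weak ser} this is equivalent to showing $e-f$ associates with its own $1,0$-eigenspace $A^{e-f}_{\{1,0\}}$. Here the hypothesis $A^{e-f}_{\{1,0\}} \subseteq A^e_{\{1,0\}} \cap A^f_{\{1,0\}}$ does the work: take $z \in A^{e-f}_{\{1,0\}}$ and arbitrary $x \in A$. Since $z \in A^e_{\{1,0\}}$ and $e$ is Seress, Lemma~\ref{lem weak ser} gives $e(xz) = (ex)z$; since $z \in A^f_{\{1,0\}}$ and $f$ is Seress, similarly $f(xz) = (fx)z$. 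Subtracting, $(e-f)(xz) = ((e-f)x)z$, which is exactly associativity of $e-f$ with its $1,0$-eigenspace, so by Lemma~\ref{lem weak ser} the fusion rules of $e-f$ are Seress.

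The main obstacle I anticipate is part (ii), specifically establishing that $\ad(e)$ and $\ad(f)$ commute rather than merely that $e,f$ commute as algebra elements. The subtlety is that in a nonassociative algebra $\ad$ is not a homomorphism, so commuting elements need not have commuting adjoints; one genuinely needs to exploit that $f$ is Seress and $ef = f$ to run a Lemma~\ref{lem weak ser}-type argument controlling triple products $f(xe)$ versus $(fx)e$ and $e(xf)$ versus $(ex)f$. Getting this bookkeeping right — i.e. checking both $f$ associates with $e$ (using $e \in A^f_1$, which follows since $fe = f$ forces... actually one needs $e$ in $A^f_{\{1,0\}}$, obtained from $f$ being an idempotent in $A^e_1$) and then that this suffices for $[\ad(e),\ad(f)]=0$ on all of $A$ — is the delicate heart of the argument. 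The rest is formal.
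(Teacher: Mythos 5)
Your proposal is correct and follows essentially the same route as the paper: the direct computation for (i), commuting the diagonalisable operators $\ad(e)$ and $\ad(f)$ via the Seress property of $f$ for (ii), and subtracting the two associativity identities supplied by Lemma~\ref{lem weak ser} for (iii). In fact you isolate the key point of (ii) slightly more carefully than the paper does: one needs $e\in A^f_{\{1,0\}}$ (not $A^f_1$), which follows by decomposing $e$ into $f$-eigenvectors and using $fe=f$ to kill every component outside $A^f_{\{1,0\}}$.
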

	\begin{proof}
    i. This is
      \begin{equation}
        (e-f)(e-f) = ee - 2ef + ff = e - 2f + f = e - f.
      \end{equation}
  
    ii. As $f$ is Seress, it associates with its $1$-eigenspace,
      in particular, with $e$.
      Therefore $(ex)f = e(xf)$ for all $x\in A$;
      equivalently, $\ad(e)\ad(f) = \ad(f)\ad(e)\in\End(A)$,
      so that $[\ad(e),\ad(f)] = 0$.
      Thus $\ad(e),\ad(f)$ are two commuting diagonalisable matrices,
      so they are simultaneously diagonalisable
      and their difference $\ad(e) - \ad(f) = \ad(e-f)$ is diagonalisable.
      
    iii. If $A^{e-f}_{\{1,0\}}\subseteq A^e_{\{1,0\}}\cap A^f_{\{1,0\}}$,
        then any $1$-eigenvector $x$ of $e-f$ is a $1$-eigenvector of $e$ and a $0$-eigenvector of $f$,
        and a $0$-eigenvector $z$ of $e-f$ is a $0$-eigenvector of $e$ and $f$.
        Since both $e$ and $f$ associate with their $1,0$-eigenspaces,
        $e-f$ associates with $A^e_{\{1,0\}}\cap A^f_{\{1,0\}}$.
        Therefore $e-f$ associates with $A^{e-f}_{\{1,0\}}$,
        and by Lemma~\ref{lem weak ser},
        $e-f$ is Seress.
	\end{proof}
	
  Observe that the hypotheses of iii. hold
  if, whenever $\lm,\mu$ are eigenvalues of $e,f$ respectively with $\lm-\mu\in\{1,0\}$,
  then $\mu = 0$.
  This is key to our last definition and theorem.
  
  \begin{definition}
    \label{def linidempot}
    Let $\cal G$ be a Fischer space and $A = M_\al(\cal G)_\FF$ its Matsuo algebra.
    Write $L_0$ for the set of identity elements of parabolic subalgebras.
    The set $L$ of {\em linear idempotents} of $A$
    is the minimal set containing $L_0$
    such that, for all $e,f\in L$ with $f\in A^e_1$, also $e-f\in L$.
  \end{definition}
	
  \begin{theorem}
    \label{thm fazit seress}
    Suppose that $(G,D)$ is a $3$-transposition group satisfying Hypothesis~\ref{hyp vreg}
    and set $A_\FF = M_\al(\cal G)_\FF$.
    The linear idempotents in $A_{\FF(\al)}$ are Seress when $\al$ is indeterminate over $\FF$.
  \end{theorem}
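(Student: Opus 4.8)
The plan is to prove the statement by structural induction on the construction of the set $L$ of linear idempotents given in Definition~\ref{def linidempot}, working over the field $\FF(\al)$ with $\al$ an indeterminate. For the base case I must show every element of $L_0$, i.e.\ every identity $\id_\cal H$ of a parabolic subalgebra, is Seress. Here the point is to reduce to Lemma~\ref{lem units seress}: given a parabolic $H$ with Fischer space $\cal H$, I chain it up to $G$ through a maximal flag of parabolics $\cal H = \cal H_0 \subsetneq \cal H_1 \subsetneq \dotsm \subsetneq \cal H_m = \cal G$, each step maximal. Hypothesis~\ref{hyp vreg} guarantees each $\cal H_{i}\subseteq\cal H_{i+1}$ is very regular, so the ``furthermore'' clause of Lemma~\ref{lem units seress} applies directly --- provided the excluded values of $\al$, namely $\al = -2/k_{\cal H}$ and $\al = 2/(k^{\cal H}_{\cal G'} - k_{\cal H} - \lm)$ for $\lm$ in the relevant spectra, do not occur. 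This is exactly where the hypothesis ``$\al$ indeterminate over $\FF$'' is used: those excluded values are elements of $\FF$ (being rational functions of the integer graph parameters $k$ and the algebraic-over-$\QQ$ eigenvalues $\lm$, which lie in a finite extension of $\FF$ at worst --- in fact the relevant $\lm$ are integers or at least algebraic numbers in $\overline{\FF}$, and in any case not equal to the transcendental $\al$), so an indeterminate $\al$ automatically avoids all of them. I would spell out that $k_\cal H \ne 0$ for connected $\cal H$ so $-2/k_\cal H \in\FF$ makes sense, and that each $\cal H_i$ is connected since $H$, hence every parabolic between $H$ and $G$ refining the flag, can be taken connected (a maximal flag of connected parabolics exists).

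For the inductive step, suppose $e, f \in L$ are already known to be Seress and diagonalisable, with $f \in A^e_1$; I must show $e - f \in L$ is Seress. This is precisely Lemma~\ref{lem coset ax}(iii), whose hypotheses are: $e$ Seress (inductive hypothesis), $e, f$ diagonalisable (inductive hypothesis, using also Lemma~\ref{lem coset ax}(ii) to propagate diagonalisability along the construction), and the eigenspace containment $A^{e-f}_{\{1,0\}} \subseteq A^e_{\{1,0\}} \cap A^f_{\{1,0\}}$. By the remark immediately following Lemma~\ref{lem coset ax}, this containment holds as soon as: whenever $\lm, \mu$ are eigenvalues of $e, f$ with $\lm - \mu \in \{0,1\}$, then $\mu = 0$. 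So the real content of the induction is to verify this numerical condition for linear idempotents, which I do by tracking the possible eigenvalues of elements of $L$.

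The crux, and the step I expect to be the main obstacle, is therefore to show that over $\FF(\al)$ the eigenvalues of any linear idempotent lie in a controlled set --- one in which no two eigenvalues $\lm,\mu$ with $\mu\neq 0$ differ by $0$ or $1$. For $\id_\cal H \in L_0$, Lemma~\ref{lem id eigval} gives eigenvalues $1$ and $\tfrac{\al}{2+\al k_\cal H}(k^\cal G_\cal H - \lm)$ for $\lm \in \Spec(\cal G/\cal H)$; these nonunit eigenvalues are nonconstant rational functions of the indeterminate $\al$ (the numerator $\al(k^\cal G_\cal H - \lm)$ vanishes at $\al = 0$ and the denominator does not, unless $k^\cal G_\cal H = \lm$, which by Perron--Frobenius happens only for the Perron eigenvalue, giving eigenvalue $0$ exactly). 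Crucially, no such eigenvalue equals $1$ (Lemma~\ref{lem units 10eigvect}), and the difference of any two of them, or of one of them and $1$, is again a nonconstant element of $\FF(\al)$ unless it is manifestly $0$ --- so it cannot equal the constant $1$, and it equals $0$ only when the two eigenvalues coincide, in which case the condition ``$\mu \ne 0 \Rightarrow \lm \ne \mu$ unless $\lm - \mu \ne 0,1$'' is vacuous. Passing down the construction, each $e - f$ inherits eigenvalues among $\{\lm_e - \lm_f\}$ restricted to the simultaneous eigenspaces (since $\ad(e), \ad(f)$ commute, as in the proof of Lemma~\ref{lem coset ax}(ii)); by induction these are again nonconstant in $\al$ off the value $0$, and an elementary check over $\FF(\al)$ --- a constant such as $1$ cannot equal a nonconstant rational function, and two rational functions agree identically only if they are equal --- closes the argument. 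Thus the numerical hypothesis of Lemma~\ref{lem coset ax}(iii) holds at every stage, the induction goes through, and every linear idempotent in $A_{\FF(\al)}$ is Seress.
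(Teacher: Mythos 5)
Your skeleton matches the paper's: establish that the identities of parabolic subalgebras are diagonalisable and Seress over $\FF(\al)$ via Lemmas~\ref{lem id diag} and~\ref{lem units seress} (with Hypothesis~\ref{hyp vreg} supplying the very regular chains and indeterminacy of $\al$ avoiding the excluded values), then propagate through the construction of $L$ using Lemma~\ref{lem coset ax} and the remark following it. Your treatment of the case $\lm-\mu=1$ is also essentially the paper's degree argument: every non-unit eigenvalue has the form $\al\sum_i\mu_i/(2+\al k_i)$, which vanishes at $\al=0$, so it cannot equal the constant $1$.

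But there is a genuine gap at exactly the point you flag as the crux. The sufficient condition from the remark after Lemma~\ref{lem coset ax} is: if $\lm-\mu\in\{1,0\}$ for eigenvalues $\lm,\mu$ of $e,f$ on a simultaneous eigenvector, then $\mu=0$. When $\lm=\mu$ this condition is \emph{not} vacuous --- it demands $\mu=0$, i.e.\ it forbids $e$ and $f$ from sharing a common nonzero eigenvalue on a simultaneous eigenvector. Your argument disposes of this case with ``two rational functions agree identically only if they are equal,'' which is a tautology and excludes nothing: a priori $\frac{\al\mu_i}{2+\al k_i}-\frac{\al\mu_j}{2+\al k_j}$ (and, for longer alternating sums, a combination $\al\sum_i\mu_i/(2+\al k_i)$) could vanish identically with the $\mu_i$ not all zero. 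This is the actual content of the theorem, and the paper closes it by proving that $\{\tfrac{1}{2+\al k_i}\}_i$ is linearly independent over $\FF$: one clears denominators and specialises $\al\mapsto-\tfrac{2}{k_i}$, which works only because the regularity degrees $k_i$ are pairwise distinct --- and that in turn is deduced from Perron--Frobenius (Theorem~\ref{thm perfrob}), since the Perron eigenvalue strictly increases along a strictly increasing chain of connected Fischer subspaces $\cal H_i\subsetneq\cal H_{i+1}$. Neither the distinctness of the $k_i$ nor the linear independence appears in your proposal, and without them the inductive step does not go through. (A secondary omission: you never reduce a general eigenvalue of a linear idempotent to the normal form ``$\sg$ or $1-\sg$ with $\sg=\al\sum_i\mu_i/(2+\al k_i)$''; the paper does this by using the nesting $A^i_1\subseteq A^{i+1}_1$ of $1$-eigenspaces to telescope the contributions of eigenvalue $1$ in the alternating sum.)
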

  \begin{proof}
    By Lemmas~\ref{lem id diag} and~\ref{lem units seress}, the identities of parabolic subalgebras are diagonalisable and Seress when $\al$ is an indeterminate
    (as this rules out any coincidences of eigenvalues such as $\al = -\frac{2}{k_\cal H},\frac{2}{k_\cal G^\cal H-k_\cal H-\lm}$).
    Suppose that $e,f\in L$, the linear idempotents of $A$, and that $f\in A^e_1$.
    Then it follows by ii. of Lemma~\ref{lem coset ax} that $e-f$ is diagonalisable with eigenvalues $\Spec(e)-\Spec(f)$.
    Therefore, to show that iii. of Lemma~\ref{lem coset ax} holds for an arbitrary linear idempotent $e$,
    which can be written as a sum $e=\sum_{i=1}^n(-1)^{i+1}\id_i$ for $\id_i$ the identity of a parabolic subalgebra,
    we need to consider when sums $\sum_{i=1}^n(-1)^{i+1}\lm_i$ of eigenvalues $\lm_i$ of $\id_i$ can equal $1$ or $0$.
    
    Contributions of eigenvalue $0$ coming from a constituent term $\id_i$ can be neglected.
    For a simultaneous eigenvector, as the $1$-eigenspaces satisfy the inclusions $A^i_1\subseteq A^{i+1}_1$,
    only the first $m$ consecutive idempotents may take eigenvalue $1$ for some $m\leq n$.
    As the sum is alternating, these contributions cancel to either $1$ or $0$.
    Therefore observe that an eigenvalue of $e$ is
    \begin{equation}
        \lm = \al\sum_{i=1}^n\frac{\mu_i}{2+\al k_i}
        \quad\text{ or }
        1-\lm,
    \end{equation}
    where $\mu_i,k_i\in\ZZ$;
    here, $\id_i$ is the identity of a subalgebra $A^{\id_i}_1$
    with Fischer space $\cal H_i\subseteq\cal G$ which is $k_i$-regular,
    and $\mu_i = (-1)^{i+1}(k_{\cal G'}^{\cal H_i}-\lm)$ for some very regular embedding $\cal H_i\subseteq\cal G'\subseteq\cal G$ and $\lm\in\Spec(\ad(\cal G'/\cal H_i))$.
    We solve for $\lm$ or $1-\lm$ equal to $1,0$;
    without loss of generality, we need only to find when $\lm = 1,0$.
    
    Comparing degrees of $\al$ in the expression for the numerator and denominator,
    we see that the denominator has a constant term,
    whereas the term of lowest degree in the numerator has degree $1$.
    Therefore they cannot be equal, so that the expression cannot evaluate to $\lm=1$.
    
    The other possibility is that $\lm = 0$, hence $\sum\frac{\mu_i}{2+\al k_i}=0$, which we now rule out.
    The denominators $2+\al k_i$ are all different,
    as the maximal, or Perron-Frobenius, eigenvalues $k_i,k_i+1$ of graphs $\cal H_i\subseteq\cal H_{i+1}$
    satisfy $k_i<k_{i+1}$ when $\cal H_i$ is strictly smaller than $\cal H_{i+1}$,
    which must be the case as $\id_i\neq\id_{i+1}$.
    Now the collection $\{\frac{1}{2+\al k_i}\}_{1\leq i\leq n}\subseteq\FF(\al)$ is linearly independent over $\FF$,
    as $\sum_{i=1}^n\frac{\mu_i}{2+\al k_i} = 0$ if and only if $\sum_{i=1}^n\mu_i\prod_{j\neq i}(2+\al k_j) = 0$,
    and by specialising $\al\mapsto-\frac{2}{k_i}$, since there are no repeated factors
    we see $\mu_i = 0$ in this sum.
    
    Thus indeed $A^{e-f}_{\{1,0\}}\subseteq A^e_{\{1,0\}}\cap A^f_{\{1,0\}}$, so $e-f$ is Seress
    by application of iii. of Lemma~\ref{lem coset ax}.
  \end{proof}

\section{Eigenvalues in $M_\al(\cal A_n)$}
  \label{sec eigvals an}
  
  For the results of this section we first present a graph construction.
  \begin{definition}
    \label{def dbl}
    Suppose that $\cal G$ is a Fischer space.
    Its {\em double graph $\cal G^\pm$}
    is the graph with point set $\{ x^+,x^- \mid x\in\cal G \}$
    and lines $\{ x^\ep, y^\eta, (x\wedge y)^{\ep\eta} \}$
    for any $x\sim y$ in $\cal G$, $\ep,\eta\in\{+,-\}$.
  \end{definition}

	\begin{lemma}
		\label{lem hat an is dn+1}
		The double graph of $\cal A_n$ is $\cal D_{n+1}$.
	\end{lemma}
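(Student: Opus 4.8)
The plan is to describe both Fischer spaces in explicit coordinates and read off an isomorphism between them. Recall that $\cal A_n$ has point set the transpositions $(ij)$ of $\Sym(n+1)$ for $1\le i<j\le n+1$, with lines the triples $\{(ij),(jk),(ik)\}$ for distinct $i,j,k$; in particular $(ij)\wedge(jk)=(ik)$. On the other side, realise $\rt D_{n+1}$ with roots $e_i\pm e_j\in\RR^{n+1}$, $1\le i<j\le n+1$. Then $\cal D_{n+1}$ has point set the reflections $s_\al$, one per root up to sign, and a triple $\{s_\al,s_\bt,s_\gm\}$ is a line exactly when $\la s_\al,s_\bt\ra\cong\Sym(3)$, which since $\rt D_{n+1}$ is simply-laced happens exactly when $(\al,\bt)=\pm1$ for the standard inner product, in which case $s_\gm=s_\al s_\bt s_\al=s_{s_\al(\bt)}$. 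Note $\size{\cal A_n^\pm}=2\binom{n+1}{2}=\size{\cal D_{n+1}}$.

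First I would define $\phi\colon\cal A_n^\pm\to\cal D_{n+1}$ on points by sending $(ij)^+$ to $s_{e_i-e_j}$ and $(ij)^-$ to $s_{e_i+e_j}$; this is a bijection by the point count and since $(ij)^{\pm}$ exhausts each fibre of the double graph. By Definition~\ref{def dbl} the lines of $\cal A_n^\pm$ are precisely the sets $\{(ij)^\ep,(jk)^\eta,(ik)^{\ep\eta}\}$ with $i,j,k$ distinct and $\ep,\eta\in\{+,-\}$ (using $(ij)\wedge(jk)=(ik)$), so it remains to show $\phi$ carries these bijectively onto the lines of $\cal D_{n+1}$.

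The heart of the matter is a short reflection computation. Write $\al(x^+)=e_i-e_j$ and $\al(x^-)=e_i+e_j$ for $x=(ij)$. In each of the four cases $\ep,\eta\in\{+,-\}$ one checks that $(\al((ij)^\ep),\al((jk)^\eta))=\pm1$, so the first two reflections generate $\Sym(3)$, and that $s_{\al((ij)^\ep)}\bigl(\al((jk)^\eta)\bigr)=\pm\,\al((ik)^{\ep\eta})$, so the third point of the resulting line of $\cal D_{n+1}$ is exactly $\phi((ik)^{\ep\eta})$; for instance $s_{e_i-e_j}(e_j-e_l)=e_i-e_l$ (matching $+\cdot+=+$), $s_{e_i-e_j}(e_j+e_l)=e_i+e_l$ (matching $+\cdot-=-$), $s_{e_i+e_j}(e_j-e_l)=-(e_i+e_l)$ (matching $-\cdot+=-$) and $s_{e_i+e_j}(e_j+e_l)=-(e_i-e_l)$ (matching $-\cdot-=+$). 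Hence $\phi$ sends lines to lines. Conversely, two distinct roots $\al,\bt$ of $\rt D_{n+1}$ with $(\al,\bt)=\pm1$ must share exactly one coordinate index: disjoint index sets force $(\al,\bt)=0$, and equal index sets give the orthogonal pair $\{e_i-e_j,e_i+e_j\}$. So, after possibly replacing $\al,\bt$ by their negatives, every line of $\cal D_{n+1}$ has the shape $\{s_{e_i\pm e_j},s_{e_j\pm e_l},s_{s_{e_i\pm e_j}(e_j\pm e_l)}\}$, which by the same computation equals $\phi$ of a line $\{(ij)^\ep,(jl)^\eta,(il)^{\ep\eta}\}$ of $\cal A_n^\pm$.

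An isomorphism of Fischer spaces is just a bijection of point sets carrying lines to lines; since $\phi$ and $\phi^{-1}$ both do this, $\phi$ realises $\cal A_n^\pm\cong\cal D_{n+1}$. The one delicate point is the sign bookkeeping in the reflection computation: one must verify that the label $\ep\eta$ attached to the third point of a line in the double graph agrees with the sign of the root $s_{e_i\pm e_j}(e_j\pm e_l)$ produced by the reflection, and the four displayed identities show the two conventions coincide in all cases. Everything else is routine.
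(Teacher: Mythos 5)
Your proof is correct, but it takes a genuinely different route from the paper. The paper works at the level of group presentations: it selects $n+1$ points of $\cal A_n^\pm$ (namely $t_1^-,t_1^+,t_2^+,\dotsc,t_n^+$) whose associated transpositions satisfy the Coxeter relations of type $\rt D_{n+1}$, deduces that the group they generate is a quotient of $W(\rt D_{n+1})$, and then closes the gap by counting transpositions ($n(n+1)$ on both sides). You instead write down an explicit point bijection $(ij)^+\mapsto s_{e_i-e_j}$, $(ij)^-\mapsto s_{e_i+e_j}$ in root coordinates and verify directly, in both directions, that it carries lines to lines; your four sign computations are all correct, and your observation that a line of the double graph over $\{x,y,z\}$ is exactly a triple $\{x^\ep,y^\eta,z^\zeta\}$ with $\ep\eta\zeta=+$ (symmetric in the three points) is what makes the single case $\{(ij)^\ep,(jk)^\eta,(ik)^{\ep\eta}\}$ suffice. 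Your argument buys an explicit isomorphism (handy later, e.g.\ in Section~\ref{sec invols} where the identification $\cal D_m=\cal A_{m-1}^\pm$ is used pointwise) and avoids the mildly delicate step of upgrading a surjection of groups with equal transposition counts to an isomorphism of Fischer spaces; the paper's argument is lighter on computation and fits its presentation-theoretic style. One small point worth making explicit in your write-up: distinct roots up to sign give distinct reflections in $W(\rt D_{n+1})$, so your point count really does certify bijectivity of $\phi$.
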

	\begin{proof}
		Suppose that $\{x_1,\dotsc,x_m\}$ are the points in $\cal A_n$,
		inducing transpositions $\{t_1,\dotsc,t_m\}$ in the Weyl group $W(\rt A_n)$.
		Then there are $s_1,\dotsc,s_n$ transpositions among them
		satisfying the Coxeter presentation for $W(\rt A_n)$ in Figure~\ref{fig-an}.
		\begin{figure}[ht]
		\begin{center}
		\begin{tikzpicture}[scale=1.5]
			\tikzstyle{every node}=[draw=white,ultra thick,
				circle,fill=white,minimum size=8pt,inner sep=0pt]
			\draw[dashed,thick] (0:1.5cm) -- ++(0:1cm);
			\draw[thick] (0:1cm) -- ++(0:0.5cm);
			\draw[thick] (0:2.5cm) -- ++(0:0.5cm);
			\draw[thick]
				(0:0) node [label=below:$s_1$] {} --
				++(0:1cm) node [label=below:$s_2$] {}
				++(0:2cm) node [label=below:$s_n$] {}
			;
			\tikzstyle{every node}=[draw,
				circle,fill=white,minimum size=5pt,inner sep=0pt]
			\draw[thick]
				(0:0) node {}
				++(0:1cm) node {}
				++(0:2cm) node {}
			;
		\end{tikzpicture}
		\end{center}
		\caption{Coxeter presentation for $W(\rt A_n)$}
		\label{fig-an}
		\end{figure}
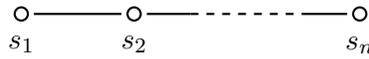

		Let $x_1^+,\dotsc,x_m^+,x_1^-,\dotsc,x_m^-$
		be the points of $\cal A_{n+1}^\pm$
		and $t_i^\ep$ the transposition $\tau(x_i^\ep)$ of $x_i^\ep$
		in the permutation representation.
		Then it follows that $S = \{t_1^-,t_1^+,t_2^+,t_3^+,\dotsc,t_r^+\}$,
		transpositions induced from the points of $\cal A^\pm_n$,
		satisfies the Coxeter presentation for $W(\rt D_{n+1})$ in Figure~\ref{fig-dn}.
		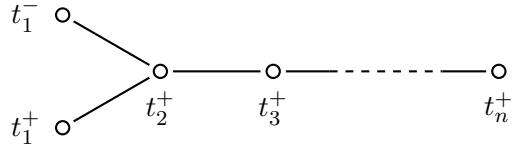
\begin{figure}[ht]
		\begin{center}
		\begin{tikzpicture}[scale=1.5]
			\tikzstyle{every node}=[draw=white,ultra thick,
				circle,fill=white,minimum size=8pt,inner sep=0pt]
			\draw[dashed,thick] (30:1cm) ++(0:1.5cm) -- ++(0:1cm);
			\draw[thick] (30:1cm) ++(0:1cm) -- ++(0:0.5cm);
			\draw[thick] (30:1cm) ++(0:2.5cm) -- ++(0:0.5cm);
			\draw[thick]
				(90:1cm) node [label=left:$t_1^-$] {} -- ++(330:1cm);
			\draw[thick]
				(0:0) node [label=left:$t_1^+$] {} --
				++(30:1cm) node [label=below:$t_2^+$] {} --
				++(0:1cm) node [label=below:$t_3^+$] {}
				++(0:2cm) node [label=below:$t_n^+$] {}
			;
			\tikzstyle{every node}=[draw,
				circle,fill=white,minimum size=5pt,inner sep=0pt]
			\draw[thick]
				(0:0) node {}
				++(90:1cm) node {}
				++(330:1cm) node {}
				++(0:1cm) node {}
				++(0:2cm) node {}
			;
		\end{tikzpicture}
		\end{center}
		\caption{Coxeter presentation for $W(\rt D_{n+1})$}
		\label{fig-dn}
		\end{figure}

		Moreover, $S$ generates $G = W(\rt A^\pm_n)$,
		so $G$ is a quotient of $W(\rt D_{n+1})$.
		In fact a counting argument shows that $G = W(\rt D_{n+1})$,
		since $W(\rt D_{n+1})$ has $n(n+1)$ transpositions
		and $W(\rt A_n)$ has $2\cdot\frac{1}{2}n(n+1)$, the same number.
		The corresponding points $x_1^-,x_1^+,x_2^+,\dotsc,x_r^+$
		generate $\cal A^\pm_n$,
		therefore $\cal D_{n+1} \cong \cal A^\pm_n$.
	\end{proof}

	\begin{lemma}
		\label{lem co an}
		The boundary graph
		$\cal A_n/\cal A_{n-1}$ is $K_n$,
		the complete graph on $n$ points.
	\end{lemma}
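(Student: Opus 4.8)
The plan is to argue in the explicit combinatorial model of $\cal A_n$, the Fischer space of $W(\rt A_n)=\Sym(n+1)$: its points are the transpositions $(ij)$ with $1\leq i<j\leq n+1$, and its lines are the triples $\{(ij),(jk),(ik)\}$ for pairwise distinct $i,j,k$, these being exactly the triples of transpositions generating a copy of $\Sym(3)$. In this model, if $x=(ij)$ and $y=(jk)$ are distinct collinear points then $x\wedge y=(ik)$.

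First I would fix the embedding $\cal A_{n-1}\subseteq\cal A_n$. By the analysis in the proof of Theorem~\ref{thm vreg}, a connected maximal parabolic subgroup of $\Sym(n+1)$ is a point stabiliser isomorphic to $\Sym(n)$, so up to the action of $W(\rt A_n)$ we may take $\cal A_{n-1}$ to be the subspace of $\cal A_n$ on the transpositions supported in $\{1,\dotsc,n\}$. Consequently $\cal A_n\smallsetminus\cal A_{n-1}$ is exactly the set of $n$ transpositions $(i,n+1)$, $1\leq i\leq n$.

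Next I would compute the boundary graph. For $n\geq 2$, each $(i,n+1)$ lies on the line $\{(i,n+1),(j,n+1),(ij)\}$ for any $j\leq n$ with $j\neq i$, and $(ij)\in\cal A_{n-1}$; hence every point of $\cal A_n\smallsetminus\cal A_{n-1}$ is adjacent to $\cal A_{n-1}$, so the point set $\cal A_{n-1}^\sim$ equals $\{(i,n+1)\mid 1\leq i\leq n\}$ and has exactly $n$ points. For the lines of $\cal A_n/\cal A_{n-1}$: given distinct $(i,n+1),(k,n+1)\in\cal A_{n-1}^\sim$, the unique line of $\cal A_n$ through them is $\{(i,n+1),(k,n+1),(ik)\}$, so their wedge $(ik)$ lies in $\cal A_{n-1}$. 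By the definition of the boundary graph this means every pair of distinct points of $\cal A_{n-1}^\sim$ is joined, so $\cal A_n/\cal A_{n-1}\cong K_n$ (the case $n=1$ being the trivial one-point graph $K_1$).

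There is no serious obstacle here: the only points requiring care are checking that the chosen $\cal A_{n-1}$ is indeed a maximal connected parabolic so that the boundary graph is the intended one, and confirming that the wedge of two ``boundary'' transpositions $(i,n+1),(k,n+1)$ always lands back inside $\cal A_{n-1}$ — both of which are immediate from the $\Sym(3)$ description of lines.
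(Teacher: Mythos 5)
Your proof is correct and follows essentially the same route as the paper's: identify $\cal A_{n-1}\subseteq\cal A_n$ with the point stabiliser $\Sym(n)\subseteq\Sym(n+1)$, observe that the transpositions outside are exactly the $n$ elements $(i,n+1)$, and note that any two of them wedge to $(i,k)\in\cal A_{n-1}$, so the boundary graph is complete. Your added check that every $(i,n+1)$ is in fact adjacent to $\cal A_{n-1}$ (so that $\cal A_{n-1}^\sim$ really has $n$ points) is a small point the paper leaves implicit, but the argument is the same.
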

	\begin{proof}
		Recall that the group $W(\rt A_n)$
		generated by Miyamoto involutions of points $x\in\cal A_n$
		is the symmetric group $\Sym(n+1)$ on $n+1$ letters.
		Taking the embedding $H = \Sym(n)\subseteq\Sym(n+1) = G$
		that corresponds to $\cal A_{n-1}\subseteq\cal A_n$
		gives that $H$ has support $\{1,\dotsc,n\}$
		and $G$ has support $\{1,\dotsc,n+1\}$
		in the standard permutation realisation of $G$.
		Then if $s,t\in G\smallsetminus H$ are transpositions,
		they each move two letters in $\{1,\dotsc,n+1\}$.
		If $s$ moves two letters in $\{1,\dotsc,n\}$ then $s\in H$,
		so $s$ moves $n+1$; the same goes for $t$.
		We can therefore write $s = (i,n+1)$ and $t = (j,n+1)$
		for $1\leq i,j\leq n$.
		Then $st = (i,j)$ lies in $H$.
		This shows that the points $x,y\in\cal A_n$ corresponding to $s,t$
		satisfy $x\wedge y\in\cal A_{n-1}$.
		As $s,t$ were arbitrary,
		any two points in $\cal A_n/\cal A_{n-1}$ are connected.
	\end{proof}
	
	\begin{lemma}
		\label{lem dbl boundary}
		The double graph $(\cal G/\cal H)^\pm$ of $\cal G/\cal H$,
		for $\cal G,\cal H$ linear $3$-graphs,
		is ${\cal G^\pm}/\cal H^\pm$.
	\end{lemma}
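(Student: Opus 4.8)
The plan is to prove the identity by chasing definitions, after recording the local combinatorics of the double graph. First I would pin down the incidence structure of $\cal G^\pm$: given a line $\ell=\{p,q,r\}$ of a linear $3$-graph (so $p\wedge q=r$, and cyclically), the lines of $\cal G^\pm$ lying over $\ell$ are exactly the four triples $\{p^a,q^b,r^c\}$ with $abc=+$, a description which is manifestly invariant under permuting the three roles. From this two things follow: $\cal G^\pm$ is again a linear $3$-graph (every line lies over a unique line of $\cal G$, so two distinct collinear points determine their line), and for $p\ne q$ the points $p^a$ and $q^b$ are collinear in $\cal G^\pm$ exactly when $p\sim q$ in $\cal G$, in which case $p^a\wedge q^b=(p\wedge q)^{ab}$; moreover $p^+\not\sim p^-$ always. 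I would also note the elementary facts that $h^\delta\in\cal H^\pm$ iff $h\in\cal H$, and that $\cal H^\pm$ is a subspace of $\cal G^\pm$, closed under $\wedge$ because $\cal H$ is and by the displayed wedge formula.

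Granting these, the vertex sets match immediately: by \eqref{eq hsim}, a point $x^\ep$ lies in $(\cal H^\pm)^\sim$ iff $x^\ep\notin\cal H^\pm$ and $x^\ep$ is collinear in $\cal G^\pm$ with some $h^\delta\in\cal H^\pm$, which by the structure of $\cal G^\pm$ says exactly that $x\notin\cal H$ and $x\sim h$ for some $h\in\cal H$, i.e.\ $x\in\cal H^\sim$; so the vertices of $\cal G^\pm/\cal H^\pm$ are $\{x^+,x^-:x\in\cal H^\sim\}$, which is the vertex set of $(\cal G/\cal H)^\pm$. For the edges, fix $z=x^\ep,w=y^\eta$ with $z\ne w$. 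By definition of the boundary graph, $\{z,w\}$ is a line of $\cal G^\pm/\cal H^\pm$ iff $z,w\notin\cal H^\pm$ and $z\wedge w\in\cal H^\pm$; the latter forces $x\sim y$ in $\cal G$, whence $z\wedge w=(x\wedge y)^{\ep\eta}$, and this lies in $\cal H^\pm$ precisely when $x\wedge y\in\cal H$. So the condition on $\{z,w\}$ reduces to $x,y\notin\cal H$, $x\sim y$, and $x\wedge y\in\cal H$ --- exactly the condition that $\{x,y\}$ be a line of $\cal G/\cal H$ --- and it is independent of the signs $\ep,\eta$. Hence the lines of $\cal G^\pm/\cal H^\pm$ are precisely the four pairs $\{x^\ep,y^\eta\}$ sitting over each line $\{x,y\}$ of $\cal G/\cal H$, which is by definition the line set of $(\cal G/\cal H)^\pm$.

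The argument is entirely bookkeeping, so there is no real obstacle; the one step that deserves care is the first, establishing that collinearity and $\wedge$ in $\cal G^\pm$ are governed by the clean rule $p^a\wedge q^b=(p\wedge q)^{ab}$ with no spurious incidences. This rests on the symmetry of the triple $\{p^a,q^b,r^{ab}\}$ in its three points together with the observation that each line of $\cal G^\pm$ lies over a single line of $\cal G$, so that $\cal G^\pm$ inherits the linear-space property. One should also be slightly careful that $(\cal G/\cal H)^\pm$ is read correctly: $\cal G/\cal H$ is a $2$-graph, and Definition~\ref{def dbl} is applied to it in the evident way (double the vertex set; each edge $\{x,y\}$, viewed as carrying the absent third point $x\wedge y\in\cal H$, yields the four edges $\{x^\ep,y^\eta\}$) --- the content of the lemma being exactly that doubling and passing to $\cal H^\pm$ recaptures those absent third points.
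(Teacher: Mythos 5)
Your proof is correct and takes essentially the same route as the paper's: the identity ("naive") bijection $x^\ep\mapsto x^\ep$, checked on points via \eqref{eq hsim} and on lines via the rule $x^\ep\wedge y^\eta=(x\wedge y)^{\ep\eta}$. You are in fact rather more careful than the published one-paragraph argument, notably in ruling out spurious incidences in $\cal G^\pm$ and in fixing the intended reading of $(\cal G/\cal H)^\pm$ when $\cal G/\cal H$ has lines of size two.
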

	\begin{proof}
		The naive bijection works:
		take $x^\ep\in(\cal G/\cal H)^\pm$.
		Then $x\in\cal G/\cal H$
		and is uniquely identified with a point $x'$
		in $\cal G\smallsetminus\cal H$,
		for which there exists $y'\in\cal G\smallsetminus\cal H$
		with $x'\wedge y'\in\cal H$.
		Now $x'^\ep,y'^\ep\in{\cal G^\pm}\smallsetminus\cal H^\pm$
		and $x'^\ep\wedge y'^\ep\in\cal H^{\ep\ep}\subseteq\cal H^\pm$,
		so $x'^\ep\in{\cal G^\pm}/\cal H^\pm$.
		Therefore $(\cal G/\cal H)^\pm$
		has the same cardinality as ${\cal G^\pm}/\cal H^\pm$.
		Indeed identifying $y'\in\cal G\smallsetminus\cal H$
		in the above argument with $y\in\cal G/\cal H$
		shows that this bijection also preserves lines $x\sim y$,
		so that we have an isomorphism of graphs.
	\end{proof}

	\begin{lemma}
		\label{lem boundary dbl}
		If $\cal G$ is a Fischer space
		containing no isolated points,
		then ${\cal G^\pm}/\cal G$
		is isomorphic to $\cal G$.
	\end{lemma}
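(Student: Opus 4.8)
The plan is to realise $\cal G$ as the subspace $\cal G^+=\{x^+\mid x\in\cal G\}$ of its double graph via $x\mapsto x^+$ (this is the embedding meant by $\cal G^\pm/\cal G$; note that $\{x^-\mid x\in\cal G\}$ is not even a subspace). This $\cal G^+$ is a Fischer subspace: whenever $x\sim y$ in $\cal G$, the unique line of $\cal G^\pm$ through $x^+$ and $y^+$ is $\{x^+,y^+,(x\wedge y)^{++}\}=\{x^+,y^+,(x\wedge y)^+\}$, whose third point again lies in $\cal G^+$, and $x\mapsto x^+$ is manifestly an isomorphism of Fischer spaces from $\cal G$ onto $\cal G^+$. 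I would then show that $\phi\colon x^-\mapsto x$ is a well-defined bijection from the point set of the boundary graph $\cal G^\pm/\cal G^+$ onto $\cal G$ which carries the edge relation of $\cal G^\pm/\cal G^+$ onto the collinearity relation of $\cal G$; this is the asserted isomorphism of graphs.

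First I would determine the point set. By Definition~\ref{def dbl} we have $\cal G^\pm\smallsetminus\cal G^+=\{x^-\mid x\in\cal G\}$, so by \eqref{eq hsim} the point set of $\cal G^\pm/\cal G^+$ consists of those $x^-$ that are collinear in $\cal G^\pm$ with some point $y^+$ of $\cal G^+$. The key combinatorial point is that every line of $\cal G^\pm$ through $x^-$ has the form $\{x^-,y^\eta,(x\wedge y)^{-\eta}\}$ with $y\sim x$ in $\cal G$ and $\eta\in\{+,-\}$: the further \emph{a priori} possibility that $x^-$ occurs as the third point $(u\wedge v)^{\ep\eta}$ of a line reduces to this, since then $x=u\wedge v$ forces $\{u,v,x\}$ to be a line of $\cal G$, and a short sign computation puts the line in the stated shape. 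Consequently $x^-$ is collinear with some $z^+$ exactly when $x$ has a neighbour in $\cal G$ (take $z=y$, or $z=x\wedge y$). The hypothesis that $\cal G$ has no isolated points makes this true for every $x$, so the point set of $\cal G^\pm/\cal G^+$ is all of $\{x^-\mid x\in\cal G\}$ and $\phi$ is a bijection onto $\cal G$.

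It then remains to match the edges. Fix distinct $x,y\in\cal G$. If $x\not\sim y$ in $\cal G$ then, by the enumeration above, no line of $\cal G^\pm$ contains both $x^-$ and $y^-$, so $\{x^-,y^-\}$ is not an edge of $\cal G^\pm/\cal G^+$. If $x\sim y$, then $\{x^-,y^-,(x\wedge y)^{(-)(-)}\}=\{x^-,y^-,(x\wedge y)^+\}$ is a line of $\cal G^\pm$, so $x^-\wedge y^-=(x\wedge y)^+\in\cal G^+$ and hence $\{x^-,y^-\}$ is an edge of $\cal G^\pm/\cal G^+$. Thus $\{x^-,y^-\}$ is an edge of the boundary graph if and only if $x\sim y$ in $\cal G$, so $\phi$ is the desired isomorphism.

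The argument is entirely a matter of unwinding Definition~\ref{def dbl} and the definition of the boundary graph; the only step needing care is the enumeration of the lines of $\cal G^\pm$ through a point $x^-$ — in particular the verification that, even when $x^-$ arises as the third point of a line, it gains no neighbours in $\cal G^+$ beyond those forced by the $\cal G$-neighbours of $x$ — and it is exactly here that the no-isolated-points hypothesis enters.
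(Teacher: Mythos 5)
Your proposal is correct and follows essentially the same route as the paper: identify the point set of $\cal G^\pm/\cal G^+$ with $\cal G^-$ using the no-isolated-points hypothesis, and observe that $x^-\wedge y^-=(x\wedge y)^+\in\cal G^+$ precisely when $x\sim y$, so the edges of the boundary graph match collinearity in $\cal G$. Your extra care in enumerating all lines of $\cal G^\pm$ through $x^-$ (including the case where $x^-$ is the third point of a line) is a worthwhile check that the paper leaves implicit, but it does not change the argument.
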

	\begin{proof}
		Let $x^-,y^-\in{\cal G^\pm}\smallsetminus\cal G^+$ be arbitrary.
		Then $x^-\sim y^-$ if and only if $x\sim y$ by definition,
		and if so, then $x^-\wedge y^-=(x\wedge y)^{--} = (x\wedge y)^+\in\cal G^+$.
		Furthermore since $\cal G$ contains no isolated points,
		every $x^-\in\cal G^-$ is connected to at least one other point $y^-\in\cal G^-$.
		Therefore the point set of $\cal X = \cal G^\pm/\cal G^+$
		is $\cal G^-$,
		and $\cal X$ has lines $\{x^-,y^-\}$
		exactly when $\{x,y,x\wedge y\}$ is a line in $\cal G$.
		Thus the incidence relations of the points are the same
		(although note that the lines are not the same,
		as they have differing cardinalities).
	\end{proof}

	We now give results for specific graphs.
	\begin{lemma}
		\label{lem eigval gn}
		We record the eigenvalues, where superscripts indicate multiplicities,
		\begin{equation}
			\begin{gathered}
				\Spec(\ad(K_n)) = \{(n-1)^1,-1^{n-1}\},\quad
				k^{\cal A_{n+1}}_{\cal A_n} = n-1,\\
				\Spec(\ad(\cal A_1)) = \{0^1\},\quad
				\Spec(\ad(\cal A_2)) = \{2^1,-1^2\},\\
				\Spec(\ad(\cal A_{n\geq3})) = \{(2n-2)^1,(n-3)^n,-2^{(n+1)(n-2)/2}\}, \\
				\Spec(\ad(\cal D_{n\geq4})) = \{(4n-8)^1,(2n-8)^{n-1},-4^{n(n-3)/2},0^{(n-1)n/2}\}.
			\end{gathered}
		\end{equation}
	\end{lemma}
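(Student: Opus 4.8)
The plan is to handle the five spectra in increasing order of difficulty, reducing everything to two standard ingredients---the spectrum of a complete graph and the spectrum of a line graph---together with the double-graph construction of Definition~\ref{def dbl}. The easy ones first: the adjacency matrix of $K_n$ is $J_n-I_n$ with $J_n$ the all-ones matrix, and $\Spec(J_n)=\{n^1,0^{n-1}\}$, whence $\Spec(\ad(K_n))=\{(n-1)^1,(-1)^{n-1}\}$; and $k^{\cal A_{n+1}}_{\cal A_n}=n-1$ is immediate from Lemma~\ref{lem co an}, a complete graph on $n$ vertices being $(n-1)$-regular. For the degenerate cases, $\cal A_1$ is the Fischer space of $\Sym(2)$---a single point with no lines, so its adjacency matrix is $0$ and $\Spec(\ad(\cal A_1))=\{0^1\}$---while $\cal A_2$ is the Fischer space of $\Sym(3)$, whose three transpositions form a single line and are pairwise collinear, so $\cal A_2\cong K_3$ and we reuse the previous computation at $n=3$.

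For the main case $\cal A_{n\geq3}$ I would note that the points of $\cal A_n$ are the transpositions of $\Sym(n+1)$, with $(i\,j)\sim(k\,l)$ exactly when $\la(i\,j),(k\,l)\ra\cong\Sym(3)$, that is, when $\size{\{i,j\}\cap\{k,l\}}=1$; hence $\cal A_n$ is the triangular graph $T(n+1)$, equivalently the line graph $L(K_{n+1})$. Applying the standard description of the spectrum of the line graph of a $k$-regular graph $\Gm$ on $v$ vertices and $e$ edges---eigenvalues $\lm+k-2$ for $\lm\in\Spec(\ad(\Gm))$, together with $-2$ with multiplicity $e-v$; see \cite{agt}---to $\Gm=K_{n+1}$, so $k=n$, $v=n+1$, $e=\binom{n+1}{2}$ and $\Spec(\ad(K_{n+1}))=\{n^1,(-1)^n\}$, I read off $2n-2$ with multiplicity $1$, $n-3$ with multiplicity $n$, and $-2$ with multiplicity $\binom{n+1}{2}-(n+1)=\frac{(n+1)(n-2)}{2}$, which is the claim. (The same list also falls out of the strongly regular parameters of $T(n+1)$, but the line-graph route is shorter.)

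For $\cal D_{n\geq4}$ I would use Lemma~\ref{lem hat an is dn+1} in the form $\cal D_n\cong\cal A_{n-1}^\pm$---with $n-1\geq3$, so the previous paragraph applies---together with Definition~\ref{def dbl}. Since every line of a Fischer space has three \emph{distinct} points, no line contains both $x^+$ and $x^-$, so $x^+\not\sim x^-$; therefore the adjacency matrix of $\cal G^\pm$ has all four of its $\size{\cal G}\times\size{\cal G}$ blocks equal to $A=\ad(\cal G)$, that is, it is $J_2\tensor A$ with $J_2$ the $2\times2$ all-ones matrix. Consequently $\Spec(\ad(\cal G^\pm))$ consists of $2\lm$, carrying the multiplicity of $\lm$, for each $\lm\in\Spec(\ad(\cal G))$, together with $0$ with multiplicity $\size{\cal G}$ coming from $\ker(J_2)\tensor\la\cal G\ra_\FF$. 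Substituting $\cal G=\cal A_{n-1}$, $\size{\cal A_{n-1}}=\binom{n}{2}=\frac{n(n-1)}{2}$, and the spectrum from the previous paragraph yields eigenvalues $4n-8$, $2n-8$, $-4$ and $0$ with multiplicities $1$, $n-1$, $\frac{n(n-3)}{2}$ and $\frac{n(n-1)}{2}$ respectively, and one checks these sum to $n(n-1)=\size{\cal D_n}$.

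No step here is genuinely hard; the one point demanding care is the multiplicity bookkeeping for the $0$-eigenvalue of $\cal D_n$. At $n=4$ the eigenvalue $2n-8$ of $\cal A_{n-1}^\pm$ degenerates to $0$, so the two families in the stated list coincide and their multiplicities add, giving $0$ with multiplicity $9$ in $\cal D_4$; I would flag this explicitly rather than let the formula read as a list of distinct eigenvalues. A secondary minor point is applying the line-graph formula with the correct edge count $\binom{n+1}{2}-(n+1)=\frac{(n+1)(n-2)}{2}$, which is precisely where the multiplicity of $-2$ in $\Spec(\ad(\cal A_n))$ originates.
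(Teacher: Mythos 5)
Your proof is correct, and it supplies what the paper deliberately omits: the paper's own ``proof'' of this lemma consists of declaring the spectra folklore (citing unpublished work of Hall and Shpectorov) and, for $\cal D_n$ only, indicating exactly the route you take, namely deduction from $\cal A_{n-1}$ via Lemma~\ref{lem hat an is dn+1}. Your identification of $\cal A_n$ with the triangular graph $L(K_{n+1})$ and the line-graph spectrum formula from \cite{agt} is the standard argument being gestured at, and your observation that $\ad(\cal G^\pm)=J_2\tensor\ad(\cal G)$ --- because a line of $\cal G^\pm$ has three distinct underlying points of $\cal G$, so $x^+\not\sim x^-$ --- is the clean way to organise the doubling. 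The multiplicity bookkeeping all checks out (the counts sum to $\binom{n+1}{2}$ and $n(n-1)$ respectively), and flagging the collision $2n-8=0$ at $n=4$, where the $0$-eigenvalue of $\cal D_4$ has total multiplicity $9$, is a genuine improvement on the statement as printed.

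One caveat concerns the constant $k^{\cal A_{n+1}}_{\cal A_n}$. Whatever super/subscript convention one adopts (the paper itself is inconsistent), this symbol can only refer to the embedding $\cal A_n\subseteq\cal A_{n+1}$, whose boundary graph is $\cal A_{n+1}/\cal A_n\cong K_{n+1}$ by Lemma~\ref{lem co an} with the index shifted --- an $n$-regular graph, not the $(n-1)$-regular $K_n$ your justification invokes. The value actually consumed downstream confirms this: to recover $\eta_\al(i)=\frac{\al(i+1)}{2+2\al(i-1)}$ from Lemma~\ref{lem id eigval} one needs $k^{\cal A_{i+1}}_{\cal A_i}-\lm=i+1$ with $\lm=-1\in\Spec(K_{i+1})$, i.e.\ $k^{\cal A_{i+1}}_{\cal A_i}=i$, and only this choice also produces the eigenvalue $0$ demanded by Lemma~\ref{lem units 10eigvect}. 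So the printed value $n-1$ is off by one (it is the valency for $\cal A_{n-1}\subseteq\cal A_n$), and your derivation inherits the same slip by pairing $n-1$ with the wrong complete graph. This does not touch the four spectra, which are the substance of the lemma, but you should either correct the constant to $n$ or state explicitly which embedding it is the boundary valency of.
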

	\begin{proof}
		These facts are folklore;
		we used unpublished work of Hall and Spectorov for details.
		For $\cal D_n$, we can also deduce the values
		using Lemma~\ref{lem hat an is dn+1} from those for $\cal A_n$.
	\end{proof}

	For application to vertex algebras,
    we need to calculate central charges.
    Suppose that $A = M_\al(\cal G)_\FF$ is a Matsuo algebra and that $c\in\FF$.
    Then, for $x,y\in\cal G$, by \cite{matsuo}
    \begin{equation}
        \label{eq form}
        (x,y) = \begin{cases}
            2c & \text{ if } x = y, \\
            0 & \text{ if } x \not\sim y, \\
            c\al & \text{ if } x\sim y
        \end{cases}
    \end{equation}
    defines a bilinear form on $A$.
    The {\em central charge $\cc(e)$} of an idempotent $e\in A$ is $\frac{1}{2}(e,e)$.
    This matches the scaling of the form and the definition of central charge in Theorem~\ref{thm dlmn} and \cite{miyamoto}.

    Fix embeddings $\cal A_0\subseteq\cal A_1\subseteq\dotsm\subseteq\cal A_n$
    and set, in $M_\al^c(\cal A_n^\pm)$,
    \begin{equation}
        \id_i = \id_{\cal A_i},\quad
        \hat\id_i = \id_{\cal A_i^\pm},\quad
        e_i = \id_i-\id_{i-1},\quad
        \hat e_i = \hat\id_i - \id_i.
    \end{equation}
	\begin{theorem}
	  \label{thm idempots an}
	  In $A = M^c_\al(\cal A_n^\pm)$,
	  for $4\leq i<n$, for 
	  \begin{equation}
	      \eta_\al(i) = \frac{\al (i+1)}{2+2\al(i-1)}, \quad
          \hat\eta_\al(i) = \frac{\al i}{1+\al(i-1)},
        \end{equation}
      we have
	  \begin{gather}
			\begin{aligned}
				\Spec(e_i) = \{ 1, 0,
					\eta_\al(i),
					& 1 - \eta_\al(i-1),
					\eta_\al(i) - \eta_\al(i-1),\\
					&\hat\eta_\al(i) - \eta_\al(i-1),
					\hat\eta_\al(i) - \hat\eta_\al(i-1) \},
			\end{aligned}\\
  	  \Spec(\hat e_n) = \{ 1, 0,
    	   1 - \eta_\al(i-1),
    	   1 - \hat\eta_\al(i-1) \}, \\
      \cc^c_\al(e_i) = \frac{c}{2}\frac{i(2+\al(i-3))}{(1+\al(i-1))(1+\al(i-2))}, \\
      \cc^c_\al(\hat e_i) = \frac{c}{2}\frac{i(i+1)}{(1+2\al(i+1))(1+\al(i+1))}.
    \end{gather}
  \end{theorem}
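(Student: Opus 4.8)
The plan is to derive everything from the spectra of the parabolic identities $\id_j$, together with a short computation with the form \eqref{eq form}. Work in $A=M^c_\al(\cal A_n^\pm)$ over a field in which $\al$ is an indeterminate. Since $\cal A_n^\pm=\cal D_{n+1}$ by Lemma~\ref{lem hat an is dn+1}, and its Weyl group $W(\rt D_{n+1})$ satisfies Hypothesis~\ref{hyp vreg} by Theorem~\ref{thm vreg}, Theorem~\ref{thm fazit seress} applies: each of $\id_j,\hat\id_j,e_j,\hat e_j$ is a Seress linear idempotent, so by Lemma~\ref{lem coset ax} the operators $\ad(\id_{i-1})$ and $\ad(\id_i)$, and likewise $\ad(\id_i)$ and $\ad(\hat\id_i)$, commute and are simultaneously diagonalisable. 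Hence $\Spec(e_i)$ is the set of differences $\lm-\mu$ realised on common eigenvectors of $\id_i$ and $\id_{i-1}$, with $\lm\in\Spec(\id_i)$, $\mu\in\Spec(\id_{i-1})$, and similarly for $\hat e_i$.

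First I would compute $\Spec(\id_j)$ in $A$. By Lemma~\ref{lem id diag}, $\ad(\id_j)$ is diagonalisable, equal to $1$ on $\la\cal A_j\ra$, to $0$ on $\la\cal A_j^{\not\sim}\ra$, and, on every parabolic subspace of $\cal A_n^\pm$ in which $\cal A_j$ is maximal, described by Lemma~\ref{lem id eigval}. I claim those subspaces are precisely the $\la\cal A_j,y\ra$, $y\in\cal A_j^\sim$, of two isomorphism types: an $\cal A_{j+1}$-type (adjoin a support-extending point; boundary graph $\cal A_{j+1}/\cal A_j\cong K_{j+1}$ by Lemma~\ref{lem co an}) and an $\cal A_j^\pm$-type (adjoin a sign-twisted point; boundary graph $\cal A_j^\pm/\cal A_j\cong\cal A_j$ by Lemma~\ref{lem boundary dbl}). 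Feeding $\Spec(\ad(K_{j+1}))=\{j^1,-1^j\}$ and $\Spec(\ad(\cal A_j))=\{(2j-2)^1,(j-3)^j,-2^{(j+1)(j-2)/2}\}$ from Lemma~\ref{lem eigval gn}, with $k_{\cal A_j}=2j-2$ and the corresponding inner valencies (here equal to the respective boundary-graph valencies), into Lemma~\ref{lem id eigval} gives $\Spec(\id_j)=\{1,0,\eta_\al(j),\hat\eta_\al(j)\}$, each boundary graph's Perron eigenvalue contributing the $0$. Since $\cal A_n^\pm$ is connected, $\hat\id_n$ is the identity of $A$, so $\hat e_n$ acts as $1-\id_n$ and $\Spec(\hat e_n)=1-\Spec(\id_n)=\{1,0,1-\eta_\al(n),1-\hat\eta_\al(n)\}$, the recorded list for $\hat e_n$.

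Next, $\Spec(e_i)$. I would split $A$ along $\la\cal A_{i-1}\ra\subset\la\cal A_i\ra\subset\la\cal A_i^\pm\ra\subseteq A$ and the complementary pieces and read off the joint eigenvalues of $\ad(\id_{i-1}),\ad(\id_i)$. On $\la\cal A_{i-1}\ra$ both act as $1$ (pair $(1,1)$); on a complement of $\la\cal A_{i-1}\ra$ in $\la\cal A_i\ra$, $\id_i$ acts as $1$ while $\id_{i-1}$ — with $\cal A_{i-1}$ maximal in $\cal A_i$, boundary $K_i$ — has the one-dimensional Perron eigenvalue $0$ and the eigenvalue $\eta_\al(i-1)$ (pairs $(0,1),(\eta_\al(i-1),1)$); on $\la\cal A_i^{\not\sim}\ra$ both act as $0$. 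On what remains I would use that the Perron direction of each $\id_i$-block is proportional to a difference of parabolic identities, hence annihilated by $\id_{i-1}$ (pair $(0,0)$); that the non-Perron $\eta_\al(i)$-part of each $\cal A_{i+1}$-block splits under $\id_{i-1}$ into a one-dimensional $0$-part and an $\eta_\al(i-1)$-part (pairs $(0,\eta_\al(i)),(\eta_\al(i-1),\eta_\al(i))$); and that, inside $\la\cal A_i^\pm\ra$, Perron--Frobenius together with a dimension count against $\dim A=n(n+1)$ and the multiplicities of Lemma~\ref{lem eigval gn} force the $\eta_\al(i)$- and $\hat\eta_\al(i)$-eigenspaces of $\id_i$ to meet $\Spec(\id_{i-1})$ only in $\{\eta_\al(i-1),\hat\eta_\al(i-1)\}$, realising the pairs $(\eta_\al(i-1),\eta_\al(i)),(\eta_\al(i-1),\hat\eta_\al(i)),(\hat\eta_\al(i-1),\hat\eta_\al(i))$. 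The seven resulting differences are exactly the listed eigenvalues of $e_i$; in particular $1=1-0$ lives on the one-dimensional space $A^{\id_{i-1}}_0\cap\la\cal A_i\ra$, as must be the case since $e_i$ is primitive.

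Finally, the central charges. From \eqref{eq form} and $\id_\cal H=(1+\tfrac12\al k_\cal H)^{-1}\sum_{x\in\cal H}x$ (Lemma~\ref{lem units}): for any connected $\cal H$ and $x\in\cal H$, $\sum_{x'\in\cal H}(x,x')=2c(1+\tfrac12\al k_\cal H)$, whence $\cc(\id_\cal H)=\tfrac12(\id_\cal H,\id_\cal H)=\tfrac{2c\,\size{\cal H}}{2+\al k_\cal H}$, and more generally $(\id_\cal K,\id_\cal H)=(\id_\cal K,\id_\cal K)$ for $\cal K\subseteq\cal H$. Therefore $\cc(e_i)=\cc(\id_i)-\cc(\id_{i-1})$ and $\cc(\hat e_i)=\cc(\hat\id_i)-\cc(\id_i)$. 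Substituting $\size{\cal A_j}=\binom{j+1}{2}$, $k_{\cal A_j}=2j-2$, $\size{\cal A_j^\pm}=j(j+1)$, $k_{\cal A_j^\pm}=k_{\cal D_{j+1}}=4j-4$ and simplifying — for $\cc(e_i)$ the combined numerator $(i+1)(1+\al(i-2))-(i-1)(1+\al(i-1))$ collapses to $2+\al(i-3)$ — gives the two stated rational functions. The hard part will be the bookkeeping in the third step: deciding exactly which eigenvalue pairs of $\id_{i-1}$ and $\id_i$ are simultaneously realised — equivalently, the Bratteli structure of $\la\cal A_{i-1}\ra\subset\la\cal A_i\ra\subset\la\cal A_i^\pm\ra$ — for which one must follow the Perron and non-Perron eigenvectors of the boundary graphs $K_j$ and $\cal A_j$ through the double-graph construction.
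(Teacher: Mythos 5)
Your proposal is correct and follows essentially the same route as the paper: compute $\Spec(\id_{\cal A_j})=\{1,0,\eta_\al(j),\hat\eta_\al(j)\}$ from Lemma~\ref{lem id eigval} together with the graph spectra of Lemma~\ref{lem eigval gn}, then use commutativity/simultaneous diagonalisability and the eigenspace inclusions $A^{i-1}_1\subseteq A^i_1$, $A^i_0\subseteq A^{i-1}_0$, $A^i_{\hat\eta}\subseteq A^{i-1}_{\{\eta,\hat\eta\}}$ to decide which differences of eigenvalues are actually realised; your block-by-block bookkeeping is just a more explicit version of the paper's one-line appeal to these inclusions, and your form computation $(\id_{\cal K},\id_{\cal H})=(\id_{\cal K},\id_{\cal K})$ for $\cal K\subseteq\cal H$ supplies the central-charge part, which the paper's proof omits entirely. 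One caveat: your (correct) computation gives $\cc^c_\al(\hat e_i)=\frac{c}{2}\frac{i(i+1)}{(1+2\al(i-1))(1+\al(i-1))}$, with $i-1$ rather than the $i+1$ printed in the theorem; this agrees with the specialisation $\cc^{1/2}_{1/4}(\hat e_i)=\frac{2i}{i+3}$ of Lemma~\ref{lem hrs an 1/4}, so you should not claim to recover the displayed formula verbatim --- it appears to contain a typo.
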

  \begin{proof}
		It follows from from Lemma~\ref{lem id eigval},
		and substitutions from Lemma~\ref{lem eigval gn},
		that the eigenvalues of $\id_{\cal A_i}$ in $A$ are
		\begin{equation}
			\begin{gathered}
				\Spec(\id_{\cal A_0}) = \{ 0 \},\quad
				\Spec(\id_{\cal A_{i=1,2}}) = \{ 1, 0, \eta_\al(i) \},\\
				\Spec(\id_{\cal A_{i\geq3}}) = \{ 1, 0, \eta_\al(i), \hat\eta_\al(i) \}.
			\end{gathered}
		\end{equation}
		By observations on inclusions of eigenspaces
		and the fact that, for commuting matrices $x,y$,
		$\Spec(x-y) = \Spec(x)-\Spec(y)$,
		we deduce the spectrum of $e_i$ and $\hat e_i$.
		Namely, denote $A^{\id_{\cal A_i}}_{\phi_\al(i)}$ by $A^i_\phi$;
		then $A^{i-1}_1\subseteq A^i_1$ is clear,
		$A^i_0\subseteq A^{i-1}_0$ implies that an eigenvalue $0-\phi$ is only realised for $\phi = 0$,
		and $A^i_{\hat\eta}\subseteq A^{i-1}_{\eta,\hat\eta}$.
  \end{proof}

  In view of Theorem~\ref{thm dlmn}, we calculate the specialisation of Theorem~\ref{thm idempots an} for $\al=\frac{1}{4},c=\frac{1}{2}$ in Lemma~\ref{lem hrs an 1/4}.
  With respect to the lattice vertex algebra of $\sqrt2\rt A_n$, we find $e_i$ induces a Virasoro algebra of central charge $c_i$,
  and $\hat e_i$ is the conformal vector of a $W$-algebra of central charge $\frac{2i}{i+3}$.
  The notation $h^i_{r,s}$ indicates the highest weights of the Virasoro algebra at central charge $c_i$, as per \cite{miyamoto} or \cite{lwhwyamada}.
  \begin{lemma}
    \label{lem hrs an 1/4}
		The specialisation for $\al = \sfrac{1}{4}$, $c = \sfrac{1}{2}$
		of Theorem~\ref{thm idempots an} is,
    \begin{equation}
      \cc^{1/2}_{1/4}(e_i) = 1 - \frac{6}{(i+2)(i+3)} = c_i, \quad
      \cc^{1/2}_{1/4}(\hat e_i) = \frac{2i}{i+3}.
    \end{equation}
    \begin{align}
      0 & = h^i_{1,1}, \\
      \eta_{1/4}(i) = \frac{1}{2}\frac{i+1}{i+3}
				& = \frac{1}{2}h^i_{1,3}, \\
      1-\eta_{1/4}(i-1) = \frac{1}{2}\frac{i+4}{i+2}
				& = \frac{1}{2}h^i_{3,1}, \\
      \eta_{1/4}(i)-\eta_{1/4}(i-1) = \frac{1}{(i+2)(i+3)}
				& = \frac{1}{2}h^i_{3,3}, \\
      \hat\eta_{1/4}(i)-\eta_{1/4}(i-1) = \frac{1}{2}\frac{i(i-1)}{(i+2)(i+3)}
				& = \frac{1}{2}h^i_{3,5}, \\
      \hat\eta_{1/4}(i)-\hat\eta_{1/4}(i-1) = \frac{3}{(i+2)(i+3)}
				& = \frac{1}{2}h^i_{5,5}.
    \end{align}
  \end{lemma}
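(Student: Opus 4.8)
The plan is entirely computational: Lemma~\ref{lem hrs an 1/4} is the image under $\al\mapsto\frac14$, $c\mapsto\frac12$ of the closed formulas of Theorem~\ref{thm idempots an}, so I would substitute these values into the four expressions $\eta_\al$, $\hat\eta_\al$, $\cc^c_\al(e_i)$, $\cc^c_\al(\hat e_i)$ and into the eigenvalue list for $\Spec(e_i)$, clear denominators, and then reconcile the resulting rational functions of $i$ with the Kac formula for Virasoro minimal-model highest weights. Everything is carried out under the standing hypothesis $4\leq i<n$ of Theorem~\ref{thm idempots an}, which is what makes those formulas applicable.

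First I would dispatch the central charges. Putting $\al=\frac14$, $c=\frac12$ into $\cc^c_\al(e_i)=\frac{c}{2}\frac{i(2+\al(i-3))}{(1+\al(i-1))(1+\al(i-2))}$ and simplifying gives $\frac{i(i+5)}{(i+2)(i+3)}$; since $i(i+5)=(i+2)(i+3)-6$ this equals $1-\frac{6}{(i+2)(i+3)}$, which is exactly the central charge $c_i$ of the Virasoro minimal model with $(p,p')=(i+2,i+3)$. The same substitution in $\cc^c_\al(\hat e_i)$ collapses to $\frac{2i}{i+3}$, the central charge attached to $\hat e_i$ as a $W$-algebra conformal vector. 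These two simplifications give the first display of the Lemma.

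For the weight identifications I would record once the Kac formula in the normalisation of \cite{miyamoto} (equivalently \cite{lwhwyamada}): with $p=i+2$, $p'=i+3$ the weights at central charge $c_i$ are
\[ h^i_{r,s}=\frac{(p'r-ps)^2-(p'-p)^2}{4pp'}=\frac{((i+3)r-(i+2)s)^2-1}{4(i+2)(i+3)}. \]
Substituting $\al=\frac14$ into the definitions produces the two building blocks $\eta_{1/4}(i)=\frac{i+1}{2(i+3)}$ and $\hat\eta_{1/4}(i)=\frac{i}{i+3}$; each of the six eigenvalues of $e_i$ listed in Theorem~\ref{thm idempots an}, namely $0$, $\eta_{1/4}(i)$, $1-\eta_{1/4}(i-1)$, $\eta_{1/4}(i)-\eta_{1/4}(i-1)$, $\hat\eta_{1/4}(i)-\eta_{1/4}(i-1)$, $\hat\eta_{1/4}(i)-\hat\eta_{1/4}(i-1)$, is then combined over a common denominator and matched against $\tfrac12 h^i_{r,s}$ for the claimed index pair $(r,s)\in\{(1,1),(1,3),(3,1),(3,3),(3,5),(5,5)\}$. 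For instance $h^i_{1,1}=0$; $\eta_{1/4}(i)=\frac{i+1}{2(i+3)}$ against $h^i_{1,3}=\frac{i+1}{i+3}$; and $\hat\eta_{1/4}(i)-\hat\eta_{1/4}(i-1)=\frac{3}{(i+2)(i+3)}$ against $h^i_{5,5}=\frac{6}{(i+2)(i+3)}$. The remaining three checks are of the same elementary kind.

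I do not expect a genuine obstacle here; the substance is arithmetic bookkeeping. The two points that deserve care are (i) fixing the convention for $h^i_{r,s}$ so that $(p,p')=(i+2,i+3)$ rather than $(i+3,i+2)$ — the wrong choice transposes $r\leftrightarrow s$ and corrupts the displayed list — and (ii) keeping consistent the two independent halvings, the one in $\cc(e)=\tfrac12(e,e)$ and the one relating a Matsuo eigenvalue to the corresponding Virasoro weight, so that the central charges and the weights are simultaneously normalised in the manner of Theorem~\ref{thm dlmn} and \cite{miyamoto}.
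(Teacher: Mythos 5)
Your proposal is correct and follows exactly the paper's own (very terse) proof: evaluate $\eta_{1/4}(i)=\frac{i+1}{2(i+3)}$ and $\hat\eta_{1/4}(i)=\frac{i}{i+3}$, then match each eigenvalue and central charge by elementary algebra against the Kac formula with $(p,p')=(i+2,i+3)$, which is precisely the convention you fix. One caveat worth recording: a literal substitution into the printed formula $\cc^c_\al(\hat e_i)=\frac{c}{2}\frac{i(i+1)}{(1+2\al(i+1))(1+\al(i+1))}$ yields $\frac{2i(i+1)}{(i+3)(i+5)}$ rather than $\frac{2i}{i+3}$ (the denominators should read $i-1$ in place of $i+1$), and likewise $\hat\eta_{1/4}(i)-\eta_{1/4}(i-1)$ computes to $\frac{1}{2}\frac{i(i+1)}{(i+2)(i+3)}=\frac{1}{2}h^i_{3,5}$ rather than the displayed $\frac{1}{2}\frac{i(i-1)}{(i+2)(i+3)}$ — these are typos in the source, not defects in your method, but your write-up should not assert that the printed formulas "collapse" to the stated values without flagging them.
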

  \begin{proof}
    By direct evaluation, we see
		\begin{equation}
			\eta_{1/4}(i) = \frac{1}{2}\frac{i+1}{i+3},\quad
			\hat\eta_{1/4}(i) = \frac{i}{i+3},
		\end{equation}
		and the results are then straightforward manipulations.
  \end{proof}
  
\section{Involutions and $\cal D_n$}
  \label{sec invols}
  
  	\begin{lemma}
		\label{lem sim not sim}
		Suppose that $\cal H\subseteq\cal G$
		satisfies the hypotheses of Lemma~\ref{lem units seress},
		and that $x,y\in\cal G$ are collinear.
		If $x,y\in\cal H^{\not\sim}$, then $x\wedge y\in\cal H^{\not\sim}$.
	\end{lemma}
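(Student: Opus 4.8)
The plan is to peel off the group- and algebra-theoretic trappings and reduce everything to a single combinatorial fact about two intersecting lines, which is then settled via the local classification in Definition~\ref{def fs}. Set $z = x\wedge y$, so $\{x,y,z\}$ is the line through $x$ and $y$. Since $x\in\mathcal{H}^{\not\sim}$ is not collinear with any point of $\mathcal{H}$ while $x\sim z$, we get $z\notin\mathcal{H}$; together with the hypothesis $x,y\in\mathcal{H}^{\not\sim}$ this gives $\{x,y,z\}\cap\mathcal{H}=\emptyset$. It then remains to show $z\not\sim h$ for every $h\in\mathcal{H}$, for which it suffices to prove the local claim: \emph{if $\{x,y,z\}$ is a line of a Fischer space and $h$ is a point with $h\notin\{x,y,z\}$ and $h\sim z$, then $h\sim x$ or $h\sim y$.} Applying this with $h\in\mathcal{H}$ would contradict $x,y\in\mathcal{H}^{\not\sim}$, so no such $h$ exists, and consequently $x\wedge y=z\in\mathcal{H}^{\not\sim}$.

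To prove the claim I would look at the two distinct lines $\ell_1=\{x,y,z\}$ and $\ell_2=\{z,h,z\wedge h\}$, which (being distinct and sharing $z$) meet in exactly the point $z$ by Definition~\ref{def fs}(i). By Definition~\ref{def fs}(ii) the points of $\ell_1\cup\ell_2$ span a subspace $S$ isomorphic to $\mathcal{P}_2^\vee$ or $\mathcal{P}_3$ of Figure~\ref{fig projpl}. If $S\cong\mathcal{P}_3$, every two of its nine points are collinear, so in particular $h\sim x$ and we are done. The case $S\cong\mathcal{P}_2^\vee$ is the delicate one: here $h$ need not be collinear with \emph{both} $x$ and $y$, so one must use that $\ell_1$ and $\ell_2$ pass through the common point $z$. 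I would identify $\mathcal{P}_2^\vee$ with the edges and triangles of the complete graph $K_4$ on $\{p,q,v,u\}$, two edges being collinear exactly when they share a vertex (the three non-collinear pairs being the three perfect matchings, which matches Figure~\ref{fig projpl}). Then $z$ is an edge whose two lines are the two triangles through it; taking $z=pq$ with $\ell_1$ the triangle $pqv$ gives $\{x,y\}=\{pv,qv\}$, while $h$ lies on $\ell_2=pqu$, so $h\in\{pu,qu\}$. In either case $h$ shares the vertex $p$ or $q$ with exactly one of $pv,qv$, hence is collinear with one of $x,y$, proving the claim.

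The main obstacle is precisely the $\mathcal{P}_2^\vee$ case just described: a priori one might fear a configuration where $h$ is collinear with neither $x$ nor $y$, and ruling this out is exactly what forces one to keep track of the shared point $z$; the $K_4$ picture makes the bookkeeping transparent. Everything else is routine. I also expect that the full strength of the hypotheses of Lemma~\ref{lem units seress} is not actually needed for this statement — only the meaning of $\mathcal{H}^{\not\sim}$ as the set of points outside $\mathcal{H}$ that are collinear with no point of $\mathcal{H}$ — so the argument is really a fact about Fischer spaces that happens to be phrased in the ambient setting of that lemma.
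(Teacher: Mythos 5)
Your proof is correct, but it takes a genuinely different route from the paper's. The paper argues inside the algebra: since $x,y\in\cal H^{\not\sim}$, both are $0$-eigenvectors of $\id_{\cal H}$; the Seress property ($0\star0=\{0\}$) makes $xy=\frac{\al}{2}(x+y-x\wedge y)$ a $0$-eigenvector, hence $x\wedge y$ is one too, and the classification of the $0$-eigenspace from Lemma~\ref{lem units 10eigvect} (via Perron--Frobenius, the $0$-eigenvector in the span of a connected $\cal G'\supseteq\cal H$ has full support) then excludes $x\wedge y\in\cal H^\sim$. Your argument is purely combinatorial and strictly more elementary: the local claim that a point $h\notin\{x,y,z\}$ collinear with $z$ must be collinear with $x$ or $y$ is exactly the right reduction, your treatment of the $\cal P_2^\vee$ case via the $K_4$ model is complete (each point of $\cal P_2^\vee$ lies on exactly two lines, so $\ell_2$ is forced to be the second triangle through the edge $z$, whence $h$ shares a vertex with one of $x,y$), and the $\cal P_3$ case is immediate since any two points of an affine plane are collinear. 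You are also right that the hypotheses of Lemma~\ref{lem units seress} are superfluous for the statement itself; in group-theoretic terms your local claim says only that an involution commuting with two transpositions $c,e$ with $\la c,e\ra\cong\Sym(3)$ also commutes with the third transposition $cec$ of that $\Sym(3)$. What the paper's heavier route buys is a demonstration of the Seress machinery in action and independence from the local $\cal P_2^\vee$/$\cal P_3$ case analysis; what yours buys is generality (no very-regularity or restriction on $\al$) and self-containment.
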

	\begin{proof}
		Suppose that $x,y\in\cal H^{\not\sim}$.
		Then $x,y$ are $0$-eigenvectors for $\id_{\cal H}$ in $A = M_\al(\cal G)_{\FF(\al)}$,
		$\al$ an indeterminate;
		our calculations will take place in $A$.
		Since $\id_\cal H$ is Seress,
		$xy$ is again a $0$-eigenvector of $\id_{\cal H}$.
		As $xy = \frac{\al}{2}(x + y - x\wedge y)$,
		$x\wedge y$ must also be a $0$-eigenvector.
		The $0$-eigenvectors of $\id_\cal H$ are classified in $\cal G'$
		for any $\cal G'\subseteq\cal G$ such that $\cal H\subseteq\cal G'$ is very regular,
		by Lemma~\ref{lem units 10eigvect},
		so that either $x\wedge y\in\cal H^{\not\sim}$
		or $x\wedge y\in\cal H^\sim$ and there exists
		$\cal H\subseteq\cal G'\ni x\wedge y$.
		In this latter case,
		the only $0$-eigenvector of $\id_\cal H$ in the span of $\cal G'$
		has full support in $\cal G'$,
		so that $\cal G' = \cal H\cup\{x\wedge y\}$,
		contradicting that $\cal G'$ is connected.
		Therefore $x\wedge y\in\cal H^{\not\sim}$.
	\end{proof}

	\begin{lemma}
		\label{lem id dn z2}
		The fusion rules of $\id_{\cal D_i}$ and
		\begin{equation}
			f_i = \id_{\cal D_i} - \id_{\cal D_{i-1}}
		\end{equation}
		in $M_\al(\cal D_m)$, $3\leq i < m$,
		are $\ZZ/2$-graded.
	\end{lemma}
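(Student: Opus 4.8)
The plan is to realise the claimed $\ZZ/2$-grading through a single involutory automorphism. Fix $3\le i<m$ and work in $A=M_\al(\cal D_m)_{\FF(\al)}$ with $\al$ indeterminate, so that by Lemma~\ref{lem units seress} the elements $\id_{\cal D_i},\id_{\cal D_{i-1}}$ are diagonalisable and Seress and by Lemma~\ref{lem coset ax} the operators $\ad(\id_{\cal D_i}),\ad(\id_{\cal D_{i-1}})$ commute. Suppose I can produce $\sigma\in\Aut(A)$ with $\sigma^2=\id$ fixing both idempotents and such that each eigenspace of $\id_{\cal D_i}$ lies in one of the two $\sigma$-eigenspaces $A_+,A_-$; write $s(\phi)=\pm$ accordingly. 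Since $\sigma$ is an automorphism, $A_\ep A_\eta\subseteq A_{\ep\eta}$, so any nonzero $\chi$-component of a product of a $\phi$-eigenvector with a $\psi$-eigenvector lies in $A^{\id_{\cal D_i}}_\chi\cap A_{s(\phi)s(\psi)}$ and hence satisfies $s(\chi)=s(\phi)s(\psi)$; thus $\Phi_\pm=s^{-1}(\pm)$ is a $\ZZ/2$-grading, with $1,0\in\Phi_+$ since the $1$- and $0$-eigenspaces will be $\sigma$-fixed. The same $\sigma$ should grade $f_i=\id_{\cal D_i}-\id_{\cal D_{i-1}}$, whose eigenspaces are direct sums of the $\sigma$-invariant subspaces $A^{\id_{\cal D_i}}_\lambda\cap A^{\id_{\cal D_{i-1}}}_\mu$.

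To construct $\sigma$, I would use Lemma~\ref{lem hat an is dn+1} to write $\cal D_m=\cal A_{m-1}^\pm$ and $\cal D_i=\cal A_{i-1}^\pm$, so the points are $(a,b)^\ep$ for transpositions $(a,b)$ of $\Sym(m)$ and $\ep\in\{+,-\}$, with $\cal D_i=\{(a,b)^\ep: a,b\le i\}$, $\cal D_i^{\not\sim}=\{(a,b)^\ep: a,b>i\}$ (a subspace by Lemma~\ref{lem sim not sim}), and $\cal D_i^\sim=\{(a,b)^\ep:\text{exactly one of }a,b\le i\}$. Let $\sigma$ swap $x^+\leftrightarrow x^-$ for $x^\ep\in\cal D_i^\sim$ and fix every other point. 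This is an automorphism: the underlying transpositions of a line of $\cal D_m$ form a triangle of $K_m$, which contains $0$ or $2$ transpositions with exactly one vertex in $\{1,\dots,i\}$, so every line meets $\cal D_i^\sim$ in $0$ or $2$ points, and on a line with two entries in $\cal D_i^\sim$ the two sign flips are exactly those forced by passing to the line through the images. Clearly $\sigma^2=\id$, and $\sigma$ fixes $\cal D_{i-1}\subseteq\cal D_i$ (and $\cal D_i^{\not\sim}$) pointwise, hence fixes $\id_{\cal D_i},\id_{\cal D_{i-1}}$ and $f_i$.

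Next I would pin down the eigenspaces of $\id_{\cal D_i}$ using Lemma~\ref{lem id eigval}, applied as in Lemma~\ref{lem id diag} across the very-regular pieces in which $\cal D_i$ is maximal: the value $1$ on $\la\cal D_i\ra$, the value $0$ on $\la\cal D_i^{\not\sim}\ra$ together with the Perron--Frobenius vectors of the components of $\cal D_m/\cal D_i$, and otherwise eigenvectors $e_0+e_\sim$ with $0\ne e_\sim$ a $\mu$-eigenvector of $\ad(\cal D_m/\cal D_i)$ on $\la\cal D_i^\sim\ra$. A direct computation gives $\ad(\cal D_m/\cal D_i)\colon(a,b)^\ep\mapsto\sum_{d\le i,\,d\ne a}\bigl((d,b)^++(d,b)^-\bigr)$, independent of $\ep$, so writing $v_{a,b}=(a,b)^++(a,b)^-$ and $w_{a,b}=(a,b)^+-(a,b)^-$ it acts as $2(J-I)$ on each block $\la v_{1,b},\dots,v_{i,b}\ra$ and as $0$ on $\la w_{a,b}\ra$; thus its only nonzero eigenvalues are $2(i-1)$ and $-2$, on the $v$-span, its $0$-eigenspace is the $w$-span, and its $2(i-1)$-eigenspace is the Perron--Frobenius span inside the $v$-span. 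But $\la v_{a,b}\ra$ and $\la w_{a,b}\ra$ are precisely the $+1$- and $-1$-eigenspaces of $\sigma$ on $\la\cal D_i^\sim\ra$. Finally the $\la\cal D_i\ra$-component of $\id_{\cal D_i}w_{a,b}$ vanishes (the two signs contribute cancelling terms), so for $\mu=0$ the attached $e_0$ is $0$ and the eigenvector is a $w$-vector, in $A_-$; every other eigenvector is $v$-type plus an $e_0\in\la\cal D_i\ra$, both $\sigma$-fixed, hence in $A_+$, as are $\la\cal D_i\ra$ and $\la\cal D_i^{\not\sim}\ra$. Hence $\Phi_-=\{p_i\}$ and $\Phi_+=\{1,0,q_i\}$, with $p_i=\frac{\al(i-1)}{1+(2i-4)\al}$ the $\mu=0$ value and $q_i=\frac{\al i}{1+(2i-4)\al}$ the $\mu=-2$ value, proving the grading for $\id_{\cal D_i}$.

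For $f_i$, each eigenspace is a sum of pieces $A^{\id_{\cal D_i}}_\lambda\cap A^{\id_{\cal D_{i-1}}}_\mu$, each contained in the $\sigma$-homogeneous space $A^{\id_{\cal D_i}}_\lambda$ of sign $s(\lambda)$; since $s(\lambda)=-$ only for $\lambda=p_i$, it suffices to check that no $f_i$-eigenvalue is attained both as $p_i-\mu$ (for the values $\mu\in\{0,p_{i-1}\}$ that $\id_{\cal D_{i-1}}$ takes on $A^{\id_{\cal D_i}}_{p_i}=\la w_{a,b}\ra$) and as $\lambda'-\mu'$ for $\lambda'\in\{1,0,q_i\}$. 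I expect this — verifying that $p_i$ and $p_i-p_{i-1}=\frac{\al(1-2\al)}{(1+(2i-4)\al)(1+(2i-6)\al)}$ do not coincide, as elements of $\FF(\al)$, with any of the remaining differences $1,\,1-p_{i-1},\,1-q_{i-1},\,q_i-q_{i-1},\dots$ — together with the bookkeeping of exactly which pairs $(\lambda',\mu')$ occur, to be the main obstacle; everything else is routine once $\sigma$ is in hand.
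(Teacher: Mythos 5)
Your proof is essentially the paper's: the involution $\sigma$ you build is exactly the map $t=\tau(\id_{\cal D_i})$ constructed there, and in both arguments the grading is deduced from $t$ being an involutory automorphism fixing $\id_{\cal D_i}$ together with the identification of its $(-1)$-eigenspace with the span of the vectors $x^+-x^-$. Two local differences are to your credit: you verify that $\sigma$ is an automorphism by the parity observation that every line meets $\cal D_i^\sim$ in $0$ or $2$ points and that flipping two of the three signs preserves the product of signs defining a line of $\cal A_{m-1}^\pm$, where the paper instead runs a case analysis of $\wedge$ over the three subsets $\cal D_i,\cal D_i^\sim,\cal D_i^{\not\sim}$; and you pin down the eigenspaces by computing $\ad(\cal D_m/\cal D_i)$ directly as $2(J-I)$ on the $v$-blocks and $0$ on the $w$-span, where the paper passes to the quotient graph $\cal A_{m-1}^\pm/\cal A_{m-1}$. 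Both of your computations check out.

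The one step you leave open is the $f_i$ half of the statement, namely that no $f_i$-eigenvalue is realised both as $p_i-\mu$ with $\mu\in\{0,p_{i-1}\}$ and as $\lm'-\mu'$ with $\lm'\in\{1,0,q_i\}$, $\mu'\in\{1,0,q_{i-1},p_{i-1}\}$. (The paper's proof is silent on $f_i$ altogether, so you are not missing an idea it supplies.) The check is a finite list of non-identities in $\FF(\al)$ and does close: candidates with nonzero constant term or nonpositive linear coefficient in $\al$ are ruled out immediately, since $p_i$ and $p_i-p_{i-1}$ have linear coefficients $i-1$ and $1$; the remaining near-coincidences are $q_i$, $q_i-p_{i-1}$ and $q_i-q_{i-1}$, with linear coefficients $i$, $2$ and $1$. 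The genuinely close case is
\begin{equation}
(q_i-q_{i-1})-(p_i-p_{i-1})=\frac{\al}{1+(2i-4)\al}-\frac{\al}{1+(2i-6)\al}=\frac{-2\al^2}{(1+(2i-4)\al)(1+(2i-6)\al)}\neq0,
\end{equation}
and the residual coincidences of linear coefficients at $i=3$ (e.g.\ $q_3-p_2$ against $p_3$) are excluded by comparing the $\al^2$ terms. So each $f_i$-eigenspace is indeed $\sigma$-homogeneous and your argument is complete once this verification is written out.
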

	\begin{proof}
		The eigenvectors of $x = \id_{\cal D_i}$ are
		$\{1,0,\eta_\al(i),\eta'_\al(i)\}$.
		We will show that $\Phi_+\cup\Phi_0- = \{1,0,\eta_\al(i)\}\cup\{\eta'_\al(i)\}$
		is a $\ZZ/2$-graded partition of the fusion rules.
		We first observe that the $\eta'_\al(i)$-eigenvectors
		are of the form $x^+ - x^-$
		for $x\in\cal A_i^\sim\subseteq\cal A_m$ using the identification $\cal D_m = \cal A_m^\pm$
		from Lemma~\ref{lem hat an is dn+1}.
		We can verify by direct computation that $\id_{\cal D_i}(x^+-x^-) = \eta'_\al(i)(x^+-x^-)$.
		Furthermore note that the quotient graph
		of $\cal D_m$ by $\{ x^+-x^-\mid x\in\cal A_{m-1}\}$
		is exactly $\cal A_{m-1}^\pm / \cal A_{m-1} \cong \cal A_{m-1}$
		(see Lemma~\ref{lem boundary dbl}),
		and the image of $\id_{\cal D_i}$ under this map
		is a scalar multiple of $\id_{\cal A_{i-1}}$.
		Every vector which is annihilated in the quotient is a $\eta'_\al(i)$-eigenvector,
		so in particular no $\eta_\al(i)$-eigenvector
		is mapped to $0$.
		As $\id_{\cal A_{i-1}}$ has only $3$ distinct eigenvalues in $M_\al(\cal A_{m-1})$
		by Lemma~\ref{lem id eigval},
		and the image of $1,0$-eigenvectors are again $1,0$-eigenvectors,
		it follows that the $\eta_\al(i)$-eigenspace of $\id_{\cal D_i}$
		is mapped to the $\eta_\al(i-1)$-eigenspace of $\id_{\cal A_{i-1}}$
		and the $\eta'_\al(i)$-eigenspace is completely annihilated,
		so that all $\eta'_\al(i)$-eigenvectors lie in the span of
		$\{ x^+ - x^-\mid x\in\cal A_i^\sim \}$.

		Let $t = \tau(\id_{\cal D_i})$ be the map
		\begin{equation}
			x\mapsto\begin{cases}
				x^\ep & \text{ if }x\in\cal A_i\cup\cal A_i^{\not\sim},\\
				x^{-\ep} & \text{ if }x\in\cal A_i^\sim.\\
			\end{cases}
		\end{equation}
		Observe that $t$ inverts the $\eta'_\al(i)$-eigenspace of $\id_{\cal D_i}$
		and fixes the other eigenspaces.
		By showing that $t$ is an automorphism of $A = M_\al(\cal G)$,
		together with Lemma~\ref{lem units seress},
		we show that the fusion rules of $\id_{\cal D_i}$
		are a subset of Table~\ref{tbl-di-fus},
		which is $\ZZ/2$-graded.
		\begin{table}[ht]
		\begin{center}
		\renewcommand{\arraystretch}{2.0}
		\setlength{\tabcolsep}{0.5em}
		\begin{tabular}{c|cccc}
				\;\;$\star$\;\; & $1$ & $0$ & $\eta_\al(i)$ & $\eta'_\al(i)$ \\
			\hline
				$1$ & $\{1\}$ & $\emptyset$ & $\{\eta_\al(i)\}$ & $\{\eta'_\al(i)\}$ \\
				$0$ &	& $\{0\}$ & $\{\eta_\al(i)\}$ & $\{\eta'_\al(i)\}$ \\
				$\eta_\al(i)$ &	&	& $\{1,0,\eta_\al(i)\}$ & $\{\eta'_\al(i)\}$ \\
				$\eta'_\al(i)$ &	&	&	& $\{1, 0, \eta_\al(i)\}$ \\
		\end{tabular}
		\caption{Fusion rules $\Phi$ of $\id_{\cal D_i}$}
			\label{tbl-di-fus}
		\end{center}
		\end{table}

		Again identify $\cal D_m$ as $\cal A_{m-1}^\pm$.
		Let $\ep,\eta\in\{+,-\}$ and $x,y\in\cal A_{m-1}\subseteq\cal A_{m-1}^\pm$.
		We will consider the product $\wedge$ on collinear points $x^\ep,y^\eta$
		from the subspaces $\cal D_i,\cal D_i^\sim$ and $\cal D_i^{\not\sim}$.

		If $x^\ep,y^\eta\in\cal D_i$
		then $x^\ep\wedge y^\eta\in\cal D_i$,
		since $\cal D_i$ is a closed subspace.
		If $x^\ep,y^\eta\in\cal D_i^{\not\sim}$
		then $x^\ep\wedge y^\eta\in\cal D_i^{\not\sim}$
		by Lemma~\ref{lem sim not sim}.
		If $x^\ep\in\cal D_i^{\sim},y^\eta\in\cal D_i$
		then $x^\ep\wedge y^\eta\in\cal D_i^\sim$,
		as $y\sim(x\wedge y)$ rules out $x^\ep\wedge y^\eta\in\cal D_i^{\not\sim}$
		and $x^\ep\wedge y^\eta\in\cal D_i$ would force $x^\ep\in\cal D_i$,
		a contradiction.
		If $x^\ep\in\cal D_i^{\sim},y^\eta\in\cal D_i^{\not\sim}$
		then $x^\ep\wedge y^\eta\in\cal D_i^\sim$,
		as $y\sim(x\wedge y)$ rules out $x^\ep\wedge y^\eta\in\cal D_i$
		and $x^\ep\wedge y^\eta\in\cal D_i$ would force $x^\ep\in\cal D_i^{\not\sim}$,
		a contradiction.

		Finally, suppose that $x^\ep,y^\eta\in\cal D_i^\sim$.
		We show that $x^\ep\wedge y^\eta\in\cal D_i\cup\cal D_i^{\not\sim}$.
		It is sufficient to show that for $x,y\in\cal A_{i-1}^\sim$ in $\cal A_m$
		we have $x\wedge y\in\cal A_{i-1}\cup\cal A_{i-1}^{\not\sim}$.
		Suppose that the points of $\cal A_{i-1}$
		are labelled by transpositions in $\Sym(i)$ with support $\{1,\dotsc,i\}$
		inside $\Sym(m+1)$ with support $\{1,\dotsc,m+1\}$.
		Then $x,y$ are labelled $(i_x,j_x),(i_y,j_y)$ respectively
		with $i_x,i_y\in\{1,\dotsc,i\}$ and $j_x,j_y\in\{i+1,\dotsc,m+1\}$.
		That $x\sim y$ implies that either $i_x = i_y$ or $j_x = j_y$.
		Thus $x\wedge y$ is labelled $(j_x,j_y)$ or $(i_x,i_y)$ respectively,
		and hence $x\wedge y\in\cal A_{i-1}\cup\cal A_{i-1}^{\not\sim}$.

		To show that $t$ is an automorphism of $M_\al(\cal G)$,
		by linearity it suffices to show that for any $x^\ep,y^\eta\in\cal G$
		we have
		\begin{equation}
			\label{eq t aut?}
			(x^\ep)^t(y^\eta)^t = (x^\ep y^\eta)^t.
		\end{equation}
		When $x\not\sim y$, both sides are seen to be $0$.
		By a case-by-case analysis for $x^\ep,y^\eta$ coming from the subspaces $\cal D_i,\cal D_i^\sim$ and $\cal D_i^{\not\sim}$,
		using our information on $\wedge$ calculated previously,
		we see that \eqref{eq t aut?} is satisfied in all cases,
		for example, when $x^\ep,y^\eta\in\cal D_i^\sim$,
		\begin{equation}
			\begin{aligned}
				(x^\ep)^t(y^\eta)^t & = x^{-\ep}y^{-\eta} = \frac{\al}{2}(x^{-\ep} + y^{-\eta} - x^{-\ep}\wedge y^{-\eta}),\\
				(x^\ep y^\eta)^t & = \frac{\al}{2}(x^{\ep} + y^{\eta} - x^{\ep}\wedge y^{\eta})^t = \frac{\al}{2}(x^{-\ep} + y^{-\eta} - x^{\ep}\wedge y^{\eta}),
			\end{aligned}
		\end{equation}
		and as $x^{-\ep}\wedge y^{-\eta} = x^\ep\wedge y^\eta$,
		we have the desired equality.

		Therefore $t$ is an automorphism,
		and is the Miyamoto involution of $\id_{\cal D_i}$.
	\end{proof}

  \begin{theorem}
    \label{thm invols dn}
    Let $\tau_i$ be the Miyamoto involution $\tau(\id_{\cal D_i})\in\Aut(M_\al(\cal D_m))$
    of $\id_{\cal D_n}$ for some embedding $\cal D_i\subseteq\cal D_m$, $i\geq3$.
    Then $\tau_i$ has an action on the Fischer space $\cal D_m$ and on $W(\rt D_m)/\Z(W(\rt D_m))$,
    $\tau_i,\tau_j$ are conjugate in $G = \Aut(W(\rt D_m)/\Z(W(\rt D_m)))\subseteq\Aut(M_\al(\cal D_m))$
    if and only if $i=j$,
    and in particular $\tau_i$ is not the inner automorphism of a transposition in $W(\rt D_m)$.
    If $t$ is $G$-conjugate to $\tau_i$,
    then there exists an embedding $\cal D_i\subseteq\cal D_m$
    such that $t = \tau(\id_{\cal D_i})$.
  \end{theorem}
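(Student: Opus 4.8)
The plan is to run everything through the explicit permutation formula for $\tau_i$ extracted in the proof of Lemma~\ref{lem id dn z2}: under the identification $\cal D_m=\cal A_{m-1}^\pm$ of Lemma~\ref{lem hat an is dn+1}, $\tau_i$ fixes $x^\ep$ for $x\in\cal A_{i-1}\cup\cal A_{i-1}^{\not\sim}$ and interchanges $x^+\leftrightarrow x^-$ for $x\in\cal A_{i-1}^\sim$. First I would observe that $\tau_i$, being a collinearity-preserving permutation of the point set, is an automorphism of the Fischer space $\cal D_m$, and invoke the standard dictionary between Fischer spaces and $3$-transposition groups: $\Z(W(\rt D_m))$ acts trivially on $\cal D_m$, every automorphism of $\cal D_m$ lifts to $W(\rt D_m)/\Z(W(\rt D_m))$, and conversely, so that $\Aut(\cal D_m)=\Aut(W(\rt D_m)/\Z(W(\rt D_m)))=G$. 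This yields the claimed action of $\tau_i$ on $\cal D_m$ and on $W(\rt D_m)/\Z(W(\rt D_m))$. It is also worth recording at this point that, since $\cal A_{i-1}$ is the $D_i$-subsystem and $\cal A_{i-1}^{\not\sim}\cong\cal A_{m-i-1}$ the complementary $D_{m-i}$-subsystem, $\tau_i$ is induced by conjugation in $W(\rt B_m)\supseteq W(\rt D_m)$ by the sign change on the $m-i$ coordinates orthogonal to that $D_i$; this description makes the conjugacy bookkeeping transparent.

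The heart of the argument is the conjugacy classification. I would compute the fixed subgraph of $\tau_i$ acting on $\cal D_m$: from the formula it is the induced subgraph on $\{x^\ep : x\in\cal A_{i-1}\cup\cal A_{i-1}^{\not\sim}\}$, which is the disjoint union $\cal D_i\sqcup\cal D_{m-i}$ (a $D_i\perp D_{m-i}$ pair, with no lines between the two pieces since $\cal A_{i-1}$ and $\cal A_{i-1}^{\not\sim}$ have disjoint supports). An element of $G$ conjugating $\tau_i$ to $\tau_j$ must carry the $\tau_i$-fixed subgraph onto the $\tau_j$-fixed subgraph; since the sign-change picture identifies $G$-conjugacy of $\tau_i$ and $\tau_j$ with $W(\rt B_m)$-conjugacy (modulo centre) of the corresponding sign-change elements, one reads off from the isomorphism type of the fixed subgraph — together with its embedding data in $\cal D_m$, e.g.\ the way the moved pairs attach to each component — the invariant $i$. \emph{This is the step I expect to be the main obstacle}: making sure the chosen invariant genuinely separates all the classes in the relevant range of $i$ and does not accidentally collapse two of them, and carefully tracking the labelling of reflections of $W(\rt D_m)$ through the isomorphism $\cal D_m\cong\cal A_{m-1}^\pm$ of Lemma~\ref{lem hat an is dn+1}.

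For the assertion that $\tau_i$ is not the inner automorphism of a transposition, I would argue via the $2$-cycle structure. By \eqref{eq miyamatsuo}, the Miyamoto involution of a point $x\in\cal D_m$ acts by $y\mapsto x\wedge y$ on $y\sim x$ and trivially otherwise, so every non-trivial $2$-cycle of it is a pair $\{y,\,x\wedge y\}$ of \emph{collinear} points. By contrast, every non-trivial $2$-cycle of $\tau_i$ is a pair $\{x^+,x^-\}$, which is \emph{non-collinear}, since $\cal D_m$ has no line of the form $\{x^+,x^-,\cdot\}$ (that would require $x\sim x$). As $\tau_i$ does have a non-trivial $2$-cycle whenever $i<m$ (because $\cal A_{i-1}^\sim\neq\emptyset$), and "some non-trivial $2$-cycle is non-collinear'' is plainly preserved under conjugation in $\Aut(\cal D_m)$, $\tau_i$ cannot be conjugate in $G$ to the inner automorphism of any transposition of $W(\rt D_m)$.

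Finally, the converse and the last sentence come almost for free once $G\subseteq\Aut(M_\al(\cal D_m))$ is used genuinely. Any $g\in G$ is an algebra automorphism of $M_\al(\cal D_m)$, hence sends the identity of a parabolic subalgebra to the identity of the image parabolic subalgebra: $g\cdot\id_{\cal D_i}=\id_{g(\cal D_i)}$, and therefore $g\tau_i g^{-1}=\tau(g\cdot\id_{\cal D_i})=\tau(\id_{g(\cal D_i)})$, where $g(\cal D_i)$ is again an embedded $\cal D_i$. Combined with the fact that all embeddings $\cal D_i\hookrightarrow\cal D_m$ are $W(\rt D_m)$-conjugate — the subsystems of type $\rt D_i$ in $\rt D_m$ form a single $W(\rt D_m)$-orbit — this shows simultaneously that $\tau_i$ is well-defined up to $G$-conjugacy, that $\tau(\id_{\cal D_i'})$ is $G$-conjugate to $\tau_i$ for every embedding, and that any $t$ which is $G$-conjugate to $\tau_i$ is of the form $\tau(\id_{\cal D_i})$ for a suitable embedding $\cal D_i\subseteq\cal D_m$.
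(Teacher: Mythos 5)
Most of your outline is sound and close to the paper's own argument: the explicit permutation formula for $\tau_i$, the ``non-collinear $2$-cycles'' argument showing $\tau_i$ is not conjugate to any $\tau(x)$ (this is exactly how the paper does it), and your final realization step via $g\cdot\id_{\cal D_i}=\id_{g(\cal D_i)}$ together with transitivity on $\rt D_i$-subsystems (which is arguably cleaner than the paper's route of recovering the embedding from the set $P$ of moved points). Your sign-change description --- $\tau_i$ is conjugation by the sign change $\ep_S$ on the $m-i$ coordinates orthogonal to the chosen $\rt D_i$ --- is also correct and is the right lens for the conjugacy question.

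However, the step you flagged as the main obstacle is a genuine gap, and in fact it cannot be closed, because the biconditional ``conjugate iff $i=j$'' fails as stated. Every invariant you propose is symmetric under $i\leftrightarrow m-i$: the fixed subgraph is $\cal D_i\sqcup\cal D_{m-i}$, and the $(-1)$-eigenspace has dimension $\size{\cal A_{i-1}^\sim}=i(m-i)$ (this latter count is exactly the invariant the paper itself uses, asserting without justification that it determines $i$). Your own sign-change picture shows the symmetry is unavoidable: $\ep_S$ and $\ep_{S^c}$ differ by the central element $-I$ of $W(\rt B_m)$, hence induce the \emph{same} automorphism. Concretely, the boundary of $\cal A_{i-1}$ (support $\{1,\dots,i\}$) and the boundary of the complementary $\cal A_{m-i-1}=\cal A_{i-1}^{\not\sim}$ (support $\{i+1,\dots,m\}$) are the same set of $i(m-i)$ transpositions, so $\tau(\id_{\cal D_i})$ and $\tau(\id_{\cal D_{m-i}})$ for these two complementary embeddings swap exactly the same pairs $x^+\leftrightarrow x^-$ and are \emph{equal} as maps. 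For $m\geq7$ and $3\leq i\leq m-3$ both indices lie in the admissible range (e.g.\ $\tau_3=\tau_4$ in $\cal D_7$, $\tau_3=\tau_5$ in $\cal D_8$), so the correct classification is ``conjugate iff $i=j$ or $i+j=m$''. In other words, you have not merely hit a difficulty in your approach: you have located a defect in the statement (and in the paper's proof, which relies on the false claim $\size{\cal A_{i-1}^\sim}\neq\size{\cal A_{j-1}^\sim}$ for $i\neq j$). To salvage the argument you must either restrict the index range (say $i>m/2$) or amend the biconditional; with that change your fixed-subgraph invariant does separate the remaining classes.
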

  \begin{proof}
  	    It follows from the proof of Lemma~\ref{lem id dn z2}
		that $\tau_i$ acts by swapping points in $\cal D_m$
		which are not collinear.
		On the other hand, for any $x\in\cal D_m$
		we know that $\tau(x)$ acts on $\cal D_m$ by permuting collinear points (see \eqref{eq miyamatsuo}).
		Therefore $\tau(\id_{\cal D_i})$
		is not in the conjugacy class of any Miyamoto involution $\tau(x)$ for $x\in\cal D_m$,
		which are the transpositions in $W(\rt D_m)$.
		It also follows that $\tau_i$ acts as $-1$ on a subspace of dimension $\size{A_{i-1}^\sim}$.
		As $\size{A_{i-1}^\sim}\neq\size{A_{j-1}^\sim}$ for $i\neq j$,
		and conjugation preserves the dimensions of eigenspaces,
		we have that $\tau_i,\tau_j$ cannot be conjugate for $i\neq j$.

        Recall that, if $t$ is an automorphism of an algebra $A$
        and $e\in A$ is a $\Phi$-axis for some $\Phi$,
        then $e^t$ is again a $\Phi$-axis.
        Furthermore, when $\Phi$ is $\ZZ/2$-graded and $\tau(e)$ is the Miyamoto involution of $e$,
        we have $\tau(e^t) = \tau(e)^t$.
		Therefore $\tau(x^{\tau(\id_{\cal D_i})}) = \tau(x)^{\tau(\id_{\cal D_i})}$,
		so that the action of $\tau(\id_{\cal D_i})$ on $\cal D_m$
		induces an action on $\{\tau(x)\mid x\in\cal D_m\}$ and the subgroup of $\Aut(M_\al(\cal D_m))$ it generates.
		By \cite{asch3} and \cite{hrs}, the Miyamoto involutions of $M_\al(\cal D_m)$,
		corresponding to involutions of points in the Fischer space $\cal D_m$,
		generate $W(\rt D_m)/\Z(W(\rt D_m))$.
        
        Suppose that $t = \tau_i^g$ for some $g\in G$.
        Let $P$ be the set of points not fixed by $\tau_i$ on $\cal D_m$.
        The embedding $\cal D_i\subseteq\cal D_m$ such that $\tau_i = \tau(\id_{\cal D_i})$ is the unique embedding of $\cal D_i\subseteq\cal D_m$
        such that, if $\cal D_m = \cal A_{m-1}^\pm$
        and $\cal A_{i-1} = \cal D_i\cap\cal A_{m-1}$,
        then $P = (A_{i-1}^\pm)^\sim$.
        Thus we can recover $\cal D_i\subseteq\cal D_m$ and $\id_i$ from the set $P$.
        Now any element $g\in G$ has an action on the transpositions of $G$,
        which we have identified with points of the Fischer space $\cal D_m$.
        Hence $t = \tau_i^g$ acts on $\cal D_m$ by fixing all points except $P^g$.
        The points $P^g$ uniquely identify an embedding $\cal D_i^g\subseteq\cal D_m$ and hence $\id_{\cal D_i^g} = \id_i^g$,
        so that $t = \tau(\id_i^g)$ as required.
  \end{proof}

\end{document}